\documentclass[11pt]{amsart}

\usepackage{amsmath,amssymb,amsthm}
\usepackage{graphicx}
\usepackage{tikz}
\usetikzlibrary{arrows.meta}
\usepackage[hidelinks]{hyperref}

\newcommand{\fitwidth}[1]{%
  \resizebox{\ifdim\width>\linewidth\linewidth\else\width\fi}{!}{#1}}

\newcommand{\N}{\mathbb{N}}
\newcommand{\R}{\mathbb{R}}
\newcommand{\Z}{\mathbb{Z}}
\newcommand{\Q}{\mathbb{Q}}
\newcommand{\Qtwo}{\mathbb{Q}_{2}}
\newcommand{\Qbar}{\overline{\mathbb{Q}}}
\newcommand{\nrm}[1]{\lVert #1\rVert}   
\newcommand{\cx}{P}                     
\newcommand{\Kc}{K}                     

\theoremstyle{plain}
\newtheorem{theorem}{Theorem}[section]
\newtheorem{theoremintro}{Theorem}

\newtheorem{proposition}[theorem]{Proposition}
\newtheorem{lemma}[theorem]{Lemma}
\newtheorem{corollary}[theorem]{Corollary}
\newtheorem{conjecture}[theorem]{Conjecture}

\theoremstyle{definition}
\newtheorem{definition}[theorem]{Definition}

\theoremstyle{remark}
\newtheorem{remark}[theorem]{Remark}

\begin{document}

\title[The minimal word of the rational base $3/2$]{Transcendence criteria\\ for
the minimal word of the rational base $3/2$}
\author{Ralf Stephan}
\thanks{Institute for Globally Distributed Open Research and Education (IGDORE)}

\date{\today}

\begin{abstract}
Let $x_{0}$ be a positive integer, let $x_{n}=\lceil 3x_{n-1}/2\rceil$, and let
$w_{n}=2x_{n+1}-3x_{n}\in\{0,1\}$ be the associated word studied by Dubickas; for $x_{0}=1$
the orbit is A061419 and $w$ is the minimal word $g_{3/2}$ of the rational base
number system of Akiyama, Frougny and Sakarovitch. The orbit encodes a real
constant $\Kc=\lim_{n}x_{n}(2/3)^{n}$, equal for $x_{0}=1$ to
$\omega_{3/2}=K(3)=1.6222705028\ldots$, whose irrationality has been open since
1977. We prove that if $w$ is automatic then $\Kc$ is transcendental;
equivalently, an algebraic $\Kc$ forces $w$ to be non-automatic. Further we prove
that either the complexity of $w$ exceeds every
linear bound or $\Kc$ is irrational. All results are
formally verified in the Lean~4 proof assistant \emph{except} four inputs quoted
from the literature, which are assumed as axioms; the transcendence input, a
special case of a theorem of Adamczewski and Faverjon, is among the verified
results rather than among those four.
\end{abstract}

\maketitle

\section{Introduction}\label{sec:intro}

Fix a positive integer $x_{0}$ and iterate the ceiling map
\[
U_{3/2}\colon x\longmapsto\Bigl\lceil\tfrac{3x}{2}\Bigr\rceil,
\qquad x_{n}:=U_{3/2}^{\,n}(x_{0}),
\]
so that $x_{0}<x_{1}<x_{2}<\cdots$. Following Dubickas \cite[p.~245]{Dub09} we
attach to this orbit the word
\[
w_{n}:=2x_{n+1}-3x_{n}\in\{0,1\},\qquad n=0,1,2,\dots,
\]
which is simply the parity of the orbit, $w_{n}=x_{n}\bmod 2$. For $x_{0}=1$ the
orbit is the sequence A061419, and $w$ is an object of the rational base number
system of Akiyama, Frougny and Sakarovitch \cite{AFS08}: their \emph{minimal
word} $g_{3/2}=101100011010011010100110\cdots$ (\cite[\S4.2]{AFS08}, A205083), the
lexicographically least label of an infinite path in the tree $T_{3/2}$. We prove
everything below for a general $x_{0}\ge1$, of
which $g_{3/2}$ is the case $x_{0}=1$.

\begin{remark}\label{rem:shift}
The identification is exact but carries an index shift. Akiyama, Frougny and
Sakarovitch index $g_{3/2}=g_{1}g_{2}g_{3}\cdots$ from $1$ and compute its
letters by $g_{k+1}=qG_{k+1}\bmod p$ \cite[Remark~32]{AFS08}, where $G_{0}=1$,
$G_{k+1}=\lceil (p/q)G_{k}\rceil$ is their Definition~20. At $(p,q)=(3,2)$ the
selection rule $2G_{k+1}=3G_{k}+(G_{k}\bmod 2)$ turns this into
$g_{k+1}=G_{k}\bmod 2$, which is Dubickas's $w_{k}$. So $w_{n}=g_{n+1}$, and the
sequence indexed from $0$ in this paper is the sequence indexed from $1$ there.
\end{remark}

Throughout, $\cx(w,n)$ denotes the \emph{complexity function} (also called the
block-complexity function) of $w$ in the sense of \cite[p.~245]{Dub09}: the
number of distinct vectors $(w_{j},w_{j+1},\dots,w_{j+n-1})$ as $j$ ranges over
all non-negative integers. A sequence is \emph{$k$-automatic} if its $k$-kernel
$\{n\mapsto a(k^{i}n+r):i\ge0,\ 0\le r<k^{i}\}$ is finite, and
\emph{automatic} if it is $k$-automatic for some $k\ge2$ \cite{AS03}. We write
$\nrm{x}$ for the distance from $x$ to the nearest integer.

\subsection{The constant}\label{sub:constant}

Dividing the identity $2x_{n+1}=3x_{n}+w_{n}$ by powers of $3$ (see
Proposition~\ref{prop:partial}) gives $(2/3)^{n}x_{n}=x_{0}+\frac13\sum_{j<n}
w_{j}(2/3)^{j}$, whose limit
\[
\Kc\ :=\ \lim_{n\to\infty}x_{n}(2/3)^{n}\ =\ x_{0}+\tfrac13\sum_{j\ge0}w_{j}
\Bigl(\tfrac23\Bigr)^{j}
\]
exists. For $x_{0}=1$ this is the constant $\omega_{3/2}$ of
\cite[Example~3]{AFS08}, which is the constant $K(3)$ of Odlyzko and Wilf
\cite{OW91}:
\[
\Kc=\omega_{3/2}=K(3)=1.62227\,05028\,84767\,31595\,69509\,8\ldots
\]
(sequence A083286). It is a striking feature of this circle that so little is
known about $\Kc$: its \emph{irrationality} is open, asked by Wang and Washburn
in 1977 \cite{WW77}.

\subsection{Known results}\label{sub:known}

Three facts about $w$ are in the literature. The first two we reprove; the third
we reprove as well (Corollary~\ref{cor:dubfloor}), though we never use it, for
the structural reason recorded in Remark~\ref{rem:linear}.

\begin{theorem}[Odlyzko--Wilf; Akiyama--Frougny--Sakarovitch]\label{thm:cf}
For every $x_{0}\ge1$ and every $n\ge0$,
\[
x_{n}=\bigl\lfloor \Kc\,(3/2)^{n}\bigr\rfloor .
\]
\end{theorem}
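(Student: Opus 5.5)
The approach is to show $x_n\le \Kc(3/2)^n<x_n+1$ for every $n$, using the partial-sum identity of Proposition~\ref{prop:partial}. Multiplying that identity by $(3/2)^n$ and letting the upper limit go to infinity gives the tail formula
\[
\Kc\,(3/2)^{n}=x_{n}+\tfrac13\sum_{j\ge n}w_{j}\Bigl(\tfrac23\Bigr)^{j-n}
=x_{n}+\tfrac13\sum_{i\ge0}w_{n+i}\Bigl(\tfrac23\Bigr)^{i}.
\]
To derive it, apply the identity at level $n$ and at level $m>n$, subtract, multiply by $(3/2)^n$, and let $m\to\infty$. Call the tail $T_n:=\tfrac13\sum_{i\ge0}w_{n+i}(2/3)^i$. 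Since every $w_j\ge0$, we get $T_n\ge0$, and this already gives the lower bound $x_n\le\Kc(3/2)^n$. It remains to prove $T_n<1$.

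The crude bound $T_n\le\tfrac13\cdot 3=1$ comes from taking all $w_{n+i}=1$. So it suffices to show that the digits $w_{n},w_{n+1},\dots$ are not eventually all equal to $1$ from index $n$ on. A slightly sharper statement is that infinitely many of them vanish, since a single zero $w_{n+i}=0$ already makes $T_n\le 1-\tfrac13(2/3)^i<1$. I would argue by contradiction. Suppose $w_j=1$ for all $j\ge n$, that is, $x_j$ is odd for all $j\ge n$. Then $x_{j+1}=(3x_j+1)/2$, so $x_{j}+1$ satisfies $x_{j+1}+1=\tfrac32(x_j+1)$. Hence
\[
x_{n+k}+1=(3/2)^k(x_n+1)
\]
for all $k$. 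This forces $2^k\mid x_n+1$ for every $k$, which is impossible because $x_n+1\ge2$ is a fixed positive integer.

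For the strict inequality itself, one zero among $w_{n},w_{n+1},\dots$ suffices, and the contradiction argument supplies one: if there were none, all $w_{n+i}$ would equal $1$. This gives $0\le T_n<1$, and therefore $x_n=\lfloor \Kc(3/2)^n\rfloor$.

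The only delicate point is this strictness step: the worst case $T_n=1$ has to be excluded, and that is exactly what the $2$-adic divisibility argument does. The convergence and the exchange of the limit with the sum are routine, because the series is dominated by a geometric series.
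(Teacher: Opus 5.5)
Your proof is correct, and it differs from the paper's in how the strict inequality is obtained.

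\emph{The paper's route.} It writes $\Kc(y)$ for the constant of the orbit started at $y$. Shift-invariance gives $\Kc(x_n)=\Kc(x_0)(3/2)^n$, which reduces the theorem to showing $\Kc<x_0+1$ for every starting value $x_0\ge1$. The required zero letter then comes from aperiodicity (Theorem~\ref{thm:aper2}): an all-ones word would have period $1$. That theorem in turn rests on the $2$-adic rigidity of Theorem~\ref{thm:rigid} and the growth ceiling of Theorem~\ref{thm:ceiling}.

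\emph{Your route.} Your tail $T_n$ is exactly $\Kc(x_n)-x_n$, so the bookkeeping is the same. The difference is that you exclude an all-ones tail directly: such a tail turns the recurrence into $x_{j+1}+1=\tfrac32(x_j+1)$, which forces $2^k\mid x_n+1$ for every $k$. This uses nothing beyond the recurrence. It makes explicit that only the period-$1$ case of aperiodicity is needed. That case is just the equality case of the inequality $2(x_{n+1}+1)\le 3(x_n+1)$ from the proof of Theorem~\ref{thm:ceiling}, and it needs no rigidity theory.

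\emph{What each buys.} The paper's route reuses a theorem it proves anyway, which is economical in the formal development and fits its framing that the strict inequality ``costs'' the aperiodicity of $w$. Your argument is self-contained and shows that the real cost is only this trivial special case.
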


This is \cite[Cor.~1]{OW91} in the case $q=3$ of the Josephus problem, where the
relevant map is $x\mapsto\lceil\frac{q}{q-1}x\rceil=\lceil\frac32 x\rceil$, and
independently \cite[Cor.~31]{AFS08}, which gives $G_{k}=\lfloor\gamma_{p/q}
(p/q)^{k+1}\rfloor$ whenever $p\ge 2q-1$ --- at $(3,2)$ one has $3\ge3$. It is
also stated by Dubickas \cite[p.~244]{Dub09}, who records that $x_{n}=\lfloor
c(\beta)\beta^{n}\rfloor$ holds for $\beta\ge2$ or $\beta=2-1/q$, and
$3/2=2-1/2$.

\begin{theorem}[Akiyama--Frougny--Sakarovitch \cite{AFS08}; Dubickas
\cite{Dub09}]\label{thm:aper}
The word $w$ is not eventually periodic.
\end{theorem}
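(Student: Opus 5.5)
Suppose for contradiction that $w$ is eventually periodic. Then there are $N\ge0$ and $p\ge1$ with $w_{n+p}=w_{n}$ for all $n\ge N$. The plan is to push this periodicity through the recursion $2x_{n+1}=3x_{n}+w_{n}$ and reach a $2$-adic contradiction. Parity is determined $2$-adically, so the right setting is to iterate the affine maps $x\mapsto(3x+w)/2$ over one period.

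Put $y_{k}:=x_{N+kp}$. Composing the $p$ steps of the recursion over one period gives a fixed affine relation
\[
2^{p}y_{k+1}=3^{p}y_{k}+c,\qquad c:=\sum_{j=0}^{p-1}w_{N+j}\,3^{p-1-j}2^{j},
\]
where $c$ is an integer independent of $k$, because the period block of $w$ is the same every time. This holds for every $k\ge0$. Now rewrite it around the fixed point $y^{*}:=-c/(3^{p}-2^{p})\in\Q$. Then $y_{k+1}-y^{*}=(3/2)^{p}(y_{k}-y^{*})$, and hence
\[
y_{k}-y^{*}=(3/2)^{pk}(y_{0}-y^{*}).
\]
Multiply through by $3^{p}-2^{p}$, which is odd. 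The left side $(3^{p}-2^{p})y_{k}+c$ is an integer for every $k$. So $2^{pk}$ must divide $3^{pk}\bigl((3^{p}-2^{p})y_{0}+c\bigr)$ for all $k$. Since $3$ is odd, this forces $(3^{p}-2^{p})y_{0}+c=0$, that is, $y_{0}=y^{*}$. But then $y_{k}=y^{*}$ for all $k$, so the sequence is constant. This contradicts the strict increase $x_{0}<x_{1}<\cdots$: since $x\ge1$ gives $\lceil 3x/2\rceil>x$, the $y_{k}$ are strictly increasing. Hence $w$ cannot be eventually periodic.

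There is essentially no hard step. The one point to get right is the exact form of the composed affine map and the fact that $c$ is an integer. This follows by induction: $2^{j}x_{N+j}=3^{j}x_{N}+\sum_{i<j}w_{N+i}3^{j-1-i}2^{i}$. As an alternative, one could write $x_{n}$ through the constant $\Kc$ using Proposition~\ref{prop:partial}. Periodicity of $w$ would then make $\Kc$ rational with a fixed denominator, and Theorem~\ref{thm:cf} would force $\lfloor \Kc(3/2)^{n}\rfloor$ to satisfy the same divisibility. The direct $2$-adic argument above avoids needing that, so it is the route I would take.
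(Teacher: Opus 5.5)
Your proof is correct and is essentially the paper's argument: since $(3^{p}-2^{p})y_{0}+c=2^{p}(y_{1}-y_{0})=2^{p}(x_{N+p}-x_{N})$, what you actually establish is $2^{p(k-1)}\mid x_{N+p}-x_{N}$ for every $k$, which is the rigidity step of Theorem~\ref{thm:rigid} at the pair $(N,N+p)$ that the paper feeds into Theorem~\ref{thm:ceiling}. The only difference is that you conclude directly from $x_{N+p}>x_{N}$ and so bypass the growth bound $2^{n}(x_{n}+1)\le3^{n}(x_{0}+1)$ inside the ceiling, which aperiodicity alone does not need.
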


This is \cite[Prop.~26]{AFS08}, and independently \cite[Thm.~2]{Dub09}: an
ultimately periodic word forces the base to be a Pisot or a Salem number, and
$3/2$ is neither --- an inference Dubickas draws himself \cite[p.~245]{Dub09}.

\begin{theorem}[Dubickas \cite{Dub09}]\label{thm:dublin}
$\cx(w,n)>1.70951129\,n$ for every sufficiently large $n$.
\end{theorem}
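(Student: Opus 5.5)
The plan is to turn the complexity count into a count of residues of the orbit modulo powers of $2$, and then to count only the orbit terms that are themselves smaller than $2^{m}$.

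\textbf{Step 1: blocks are residues.} I claim that for every $m\ge1$ and every $j$, the block $(w_{j},\dots,w_{j+m-1})$ is determined by $x_{j}\bmod 2^{m}$, and conversely determines it. Then $\cx(w,m)$ equals the number of residue classes modulo $2^{m}$ that contain some $x_{j}$.

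I would prove this by induction on $m$, using $w_{n}=x_{n}\bmod 2$ and $2x_{n+1}=3x_{n}+w_{n}$.

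For the forward direction, $x_{j}\bmod 2^{m}$ gives $w_{j}=x_{j}\bmod 2$. It also gives $x_{j+1}=(3x_{j}+w_{j})/2 \bmod 2^{m-1}$. By induction this residue determines the remaining $m-1$ letters $(w_{j+1},\dots,w_{j+m-1})$.

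For the converse, suppose the block is known. By induction the last $m-1$ letters give $x_{j+1}\bmod 2^{m-1}$. Hence $3x_{j}=2x_{j+1}-w_{j}$ is known modulo $2^{m}$. Since $3$ is invertible modulo $2^{m}$, so is $x_{j}\bmod 2^{m}$.

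\textbf{Step 2: small terms give distinct residues.} The orbit is strictly increasing, so the terms $x_{j}$ with $x_{j}<2^{m}$ are distinct integers in $[0,2^{m})$. They therefore lie in distinct residue classes modulo $2^{m}$. By Step 1,
\[
\cx(w,m)\ \ge\ \#\{j\ge0:\ x_{j}<2^{m}\}.
\]

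\textbf{Step 3: counting with Theorem~\ref{thm:cf}.} By Theorem~\ref{thm:cf}, $x_{j}\le \Kc(3/2)^{j}$. So every $j$ with $\Kc(3/2)^{j}<2^{m}$ is counted in Step 2. The number of such $j$ is at least
\[
\frac{m\log 2-\log \Kc}{\log(3/2)}-1\ =\ \lambda m-C,
\qquad \lambda=\frac{\log 2}{\log(3/2)}=1.7095112913\ldots,
\]
where $C$ is a constant depending only on $x_{0}$.

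\textbf{Step 4: strict inequality.} The constant $1.70951129$ in the statement is strictly smaller than $\lambda$. Hence $\lambda m-C>1.70951129\,m$ as soon as $m>C/(\lambda-1.70951129)$, which gives the theorem for all sufficiently large $n$.

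The main obstacle is the bijectivity claim in Step 1, specifically the converse direction, which relies on inverting $3$ modulo $2^{m}$. The rest is elementary counting. The one numerical point to verify carefully is that the truncation $1.70951129$ lies strictly below $\log2/\log(3/2)$, since Step 4 depends on that gap.
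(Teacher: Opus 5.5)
Your proposal is correct and is essentially the paper's own route. Step~1 is Theorem~\ref{thm:rigid} together with Proposition~\ref{prop:residues}; you invert $3$ modulo $2^{m}$ inductively where the paper subtracts two circuit sums. Steps~2--3 are the pigeonhole of Corollary~\ref{cor:dubfloor}, run with real logarithms as in Remark~\ref{rem:dubfloor}. The remaining differences are cosmetic. You bound $x_{j}\le\Kc(3/2)^{j}$, which is already Proposition~\ref{prop:window}, so Theorem~\ref{thm:cf} is not needed; the paper instead uses the growth bound $x_{j}<(x_{0}+1)(3/2)^{j}$ from the proof of Theorem~\ref{thm:ceiling}. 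Your Step~4 also spells out, via $1.70951129<\log2/\log(3/2)$, how the additive constant is absorbed, which the paper leaves as ``up to its additive constant''.
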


This is \cite[Cor.~4]{Dub09}, the best lower bound for this word we are
aware of, derived from the general bound $\liminf_{n}\cx(w,n)/n\ge\log
q/\log(p/q)$ of \cite[Thm.~3]{Dub09}; Corollary~\ref{cor:dubfloor} below
recovers it from the rigidity theory by a pigeonhole.

\begin{remark}\label{rem:linear}
Theorem~\ref{thm:dublin} cannot be pushed to non-automaticity by sharpening its
constant. Cobham's theorem \cite{Cob72} bounds the complexity of a $k$-automatic
sequence only by $O(n)$, so a lower bound $\cx(w,n)\ge cn$ is compatible with
automaticity for \emph{every} $c$; the Thue--Morse sequence is $2$-automatic with
$\liminf_{n}\cx(n)/n=3>1.7095$ \cite{Brl89,dLV89}. Every complexity statement
below therefore targets \emph{superlinearity}, not a linear floor.
\end{remark}

\subsection{Results of this paper}\label{sub:new}

The transcendence input of the paper is the Mahler alternative of Adamczewski
and Faverjon, in the following special case.

\begin{theoremintro}[Adamczewski--Faverjon \cite{AF17}, Cor.~1.8; Cobham's 1968
conjecture]\label{thm:af}
Let $(a_{j})_{j\ge0}$ be an automatic sequence of non-negative integers, bounded
by some $B$, and let $\alpha\in\Q$ with $0<|\alpha|<1$. Then either
$\sum_{j\ge0}a_{j}\alpha^{j}$ is transcendental, or it is rational.
\end{theoremintro}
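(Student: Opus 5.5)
We follow the Mahler-method route of Adamczewski and Faverjon. The first step replaces the automatic sequence $(a_j)$ by a Mahler system. Since $(a_j)$ is $k$-automatic, its $k$-kernel is finite, say with elements $a^{(1)}=a,\dots,a^{(m)}$. Set $f_i(z)=\sum_{j\ge0}a^{(i)}_j z^j$. Splitting each series by residues modulo $k$ expresses every $f_i(z)$ as a linear combination of the values $f_l(z^k)$ with coefficients in $\Q[z]$. This gives a vector $F=(f_1,\dots,f_m)^{T}$ satisfying
\[
F(z)=A(z)F(z^{k}),\qquad A\in M_m(\Q[z]).
\]
Because each $a^{(i)}$ takes values in $\{0,\dots,B\}$, every $f_i$ lies in $\Z[[z]]$ and converges on the open unit disc. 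Our number is $f_1(\alpha)$.

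The second step reduces to a regular point. Iterate the system $r$ times to obtain $F(z)=A_r(z)F(z^{k^r})$, and choose $r$ so large that $\alpha^{k^r}$ is nonzero and avoids the finitely many poles of the inverse matrix, i.e. the zeros of $\det A$ and of its iterates near $0$. If $\det A$ vanishes identically, we first pass to a minimal system: replace $F$ by a basis of the $\Q(z)$-span of the $f_i$, which yields an invertible matrix with rational-function entries. For $\alpha\in\Q$ this reduction is harmless, since $f_1(\alpha)$ is a $\Q$-linear combination of values at $\alpha^{k^r}$. Those values are transcendental or rational exactly when $f_1(\alpha)$ is, up to a rational shift coming from polynomial parts, so the statement transfers.

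The key step applies the Adamczewski--Faverjon lifting theorem, which sharpens Nishioka's and Philippon's results. At a regular algebraic point $\beta$ with $0<|\beta|<1$, every homogeneous $\overline{\Q}$-linear relation among $f_1(\beta),\dots,f_m(\beta)$ lifts to a $\overline{\Q}(z)$-linear relation among the functions. Suppose $f_1(\beta)$ is algebraic, with $f_0:=1$. Then $f_1(\beta)-\gamma\cdot1=0$ is such a relation for the system enlarged by the constant function. It lifts to $f_1(z)=\gamma+R(z)$ with $R(z)$ a $\overline{\Q}(z)$-combination vanishing at $\beta$, or, in the simplest form, it shows that $f_1$ is congruent to a rational function modulo the relation ideal. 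Separately, Fatou's theorem (or the P\'olya--Carlson theorem) says that a power series with integer coefficients converging in the unit disc is either rational or has the unit circle as natural boundary. We combine the two as follows. The lifted relation gives $f_1(z)=g(z)+\sum c_i(z)f_i(z)$, and an induction on the dimension of the minimal system (the rank argument of \cite{AF17}) shows that the relation must express $f_1(\beta)$ through a rational function $g$ with $g\in\Q(z)$, using Galois invariance since $\alpha$ and the $a_j$ are rational. Hence $f_1(\alpha)=g(\alpha)\in\Q$.

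The main obstacle is this lifting theorem. Its proof needs the full Mahler-method machinery: Philippon's algebraic independence criterion combined with Nishioka's theorem that the transcendence degree of the values equals that of the functions, refined to linear relations. In a self-contained treatment we would quote it as the key input. The remaining steps, namely building the kernel system, ensuring regularity by iterating, and descending from $\overline{\Q}$ to $\Q$ by averaging conjugates, are routine bookkeeping.
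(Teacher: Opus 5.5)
Your architecture matches the paper's. You build a Mahler system from the finite kernel, pass to a point $\beta=\alpha^{k^r}$ far enough out that nothing degenerates, lift there, and descend from $\Qbar$ to $\Q$. The differences are of level and of bookkeeping:
\begin{itemize}
\item You cite the lifting theorem. The paper does not; it derives it, following the new proof of \cite[\S2]{AF22}, from the two lower lemmas, Theorems~\ref{thm:af17reg} and~\ref{thm:af22branch}.
\item You handle a singular kernel matrix by passing to a minimal system over $\Q(z)$, the kind of invertible matrix \cite{Bec94} provides. The paper instead rewrites the system to one with a \emph{polynomial} matrix of nonzero determinant, so that $F(\alpha)=A_r(\alpha)F(\alpha^{k^r})$ is an honest evaluation for every $r$. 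With a rational-function matrix you must use the original polynomial kernel system for that transfer, since some $\alpha^{k^j}$ with $j<r$ may be poles.
\item You descend by Galois conjugation, whereas the paper follows \cite[\S4]{AF17} with a single $k$-linear functional, which works over any field $k$ of algebraic numbers. For $k=\Q$ either device is fine.
\end{itemize}

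Two steps, however, do not go through as written.

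\emph{The transfer.} What you actually get is $f_1(\alpha)=\sum_p\lambda_p g_p(\beta)$, with $\lambda_p\in\Q$ and $g_1,\dots,g_d$ your basis. A rational combination of values each of which is rational or transcendental need not be either (think of $\pi$ and $\pi+\sqrt2$). So you cannot argue with ``$f_1(\beta)$'' alone. The lift must be applied to the relation $\sum_p\lambda_p g_p(\beta)-\gamma\cdot 1=0$ itself.

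\emph{The descent.} The Fatou/P\'olya--Carlson dichotomy concerns the function and says nothing about its value: a transcendental $f$ may take a rational value, which is exactly the branch Remark~\ref{rem:branch} insists on. No rank induction is needed either. What carries the argument is the specialization clause of the lifting theorem, which your statement of it omits. It gives $L_0,\dots,L_d\in\Qbar[z]$ with
\[
L_0+\sum_pL_pg_p=0,\qquad L_0(\beta)=-\gamma,\qquad L_p(\beta)=\lambda_p .
\]
For $\sigma\in\mathrm{Gal}(\Qbar/\Q)$, conjugating the coefficients of the $L_i$ preserves this identity because $g_p\in\Q[[z]]$. Evaluating at the rational point $\beta$ then gives $\sum_p\lambda_pg_p(\beta)=\sigma(\gamma)$. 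Hence $\sigma(\gamma)=\gamma$ for every $\sigma$, and $\gamma\in\Q$. Your final line should also evaluate at $\beta$, not $\alpha$.

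With these two repairs your outline is a complete proof, modulo the cited lifting theorem.
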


The statement is not ours. It is \cite[Cor.~1.8]{AF17} in the specialization we
need --- $f$ automatic with bounded coefficients, $\alpha$ rational, whence the
field $k$ of the general statement is $\Q$ --- and in that specialization it is
precisely the 1968 conjecture of Cobham, which Adamczewski and Faverjon call the
origin of their paper. What is new here is its status in this paper: it is no
longer assumed. The Lean development derives it, together with each of the three
reductions just named --- automatic $\Rightarrow$ $q$-Mahlerian with a
\emph{nonsingular polynomial} matrix; bounded coefficients $\Rightarrow$ radius
of convergence $\ge1$ $\Rightarrow$ $\alpha$ is not a pole; and $k=\Q$ --- from
two lemmas lying one level below the corollary, Theorems~\ref{thm:af17reg}
and~\ref{thm:af22branch} of \S\ref{sub:axioms}. The route assembles
\cite[\S4]{AF17} with the new proof of the underlying lifting theorem
\cite[\S2]{AF22}, and uses neither the analytic desingularization of
\cite[\S5]{AF17} nor the earlier proof of that theorem.
Appendix~\ref{app:af} records what the exercise turned up: several places where
the printed proof is longer than it needs to be, one printed claim that is false
as stated, one step that cannot be taken in the order in which it is printed,
and a handful of hypotheses that are load-bearing and invisible until one writes
them down.

\begin{remark}[read the alternative literally]\label{rem:branch}
The disjunct ``rational'' is not a degenerate corner. Adamczewski and
Faverjon show it cannot be removed --- \emph{``des exemples montrent que l'on ne
peut se soustraire \`a l'alternative \dots\ m\^eme en supposant la fonction
$f(z)$ transcendante''} --- and their \S8.1 exhibits a $\{0,1\}$-valued
$3$-automatic sequence with $f$ transcendental over $\Q(z)$ and yet
$f_{1}(\varphi)=-\varphi/2\in\Q(\varphi)$. An argument that reads
Theorem~\ref{thm:af} as forcing a contradiction from a \emph{known rational}
value is therefore invalid: the rational value is the permitted branch. This is
why Theorem~\ref{thm:af} is applied below only in the one direction where both
branches die at once (\S\ref{sub:af}).
\end{remark}

Our main result links the arithmetic nature of $\Kc$ to the automaticity of the
word that generates it. To our knowledge no statement of this kind was
previously available.

\begin{theoremintro}\label{thm:main}
Let $x_{0}\ge1$. If the word $w$ is automatic, then $\Kc$ is
transcendental. Equivalently: if $\Kc$ is algebraic, then $w$ is not automatic.
\end{theoremintro}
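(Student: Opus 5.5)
The plan is to apply Theorem~\ref{thm:af} to the series $\sum_{j\ge0}w_{j}(2/3)^{j}$ and to show separately that both branches of the alternative force a contradiction unless $\Kc$ is transcendental. Suppose $w$ is automatic. It takes values in $\{0,1\}$, so it is bounded by $B=1$. With $\alpha=2/3$, Theorem~\ref{thm:af} says that $S:=\sum_{j\ge0}w_{j}(2/3)^{j}$ is either transcendental or rational. By the identity of \S\ref{sub:constant} (Proposition~\ref{prop:partial} in the limit), $\Kc=x_{0}+\tfrac13 S$, which is an affine image of $S$ over $\Q$. Hence $\Kc$ is transcendental if and only if $S$ is, and $\Kc$ is rational if and only if $S$ is. It therefore suffices to rule out the branch ``$\Kc$ rational''. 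This is the direction flagged in Remark~\ref{rem:branch}: we are not trying to contradict a known rational value, but showing that rationality is impossible for this particular constant.

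Exclude rationality by showing that a rational $\Kc$ forces $w$ to be eventually periodic, which contradicts Theorem~\ref{thm:aper}. Write $\Kc=a/b$ in lowest terms. By Theorem~\ref{thm:cf}, $x_{n}=\lfloor \Kc(3/2)^{n}\rfloor$, so
\[
w_{n}=x_{n}\bmod 2=\Bigl\lfloor \tfrac{a\,3^{n}}{b\,2^{n}}\Bigr\rfloor \bmod 2 .
\]
Periodicity in this form is not evident, because the denominator $2^{n}$ grows. I would instead use the tail identity, which is cleaner. From $2x_{n+1}=3x_{n}+w_{n}$ one gets, for each $n$,
\[
\Kc(3/2)^{n}=x_{n}+\tfrac13\sum_{j\ge0}w_{n+j}(2/3)^{j}.
\]
Set $t_{n}:=\Kc(3/2)^{n}-x_{n}\in[0,1)$, which is the fractional part by Theorem~\ref{thm:cf}. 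The step $x_{n+1}=(3x_{n}+w_{n})/2$ gives $t_{n+1}=\tfrac32 t_{n}-\tfrac12 w_{n}$. Since the sequence $t_{n}$ lies in $[0,1)$, the letter $w_{n}$ is determined by $t_{n}$: if $w_n=0$ then $t_n<2/3$, and if $w_n=1$ then $t_n\ge 1/3$. This is ambiguous on $[1/3,2/3)$, so instead I would use $w_{n}=x_{n}\bmod 2$ directly, tracking the pair $(x_{n}\bmod 2^{k},t_{n})$.

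The cleaner route is a $2$-adic one. If $\Kc=a/b$, then $b\,2^{n}x_{n}=a3^{n}-b2^{n}t_{n}$, so $b\,2^{n}t_{n}\equiv a3^{n}\pmod{2^{n}}$. I expect this to be the main obstacle: rationality of $\Kc$ has to be turned into periodicity even though $t_{n}$ has unbounded denominators. My first attempt would be a growth argument instead. Since $t_{n}\in[0,1)$ and $\Kc=a/b$, the number $b\,2^{n}t_{n}=a3^{n}-b2^{n}x_{n}$ is an integer in $[0,b2^{n})$ congruent to $a3^{n}$ modulo $2^{n}$ times $b$-related factors. Combined with the recursion $t_{n+1}=\tfrac32 t_{n}-\tfrac12 w_{n}$, this pins down $w_{n}$ as a $2$-adic digit of $a/b\cdot(3/2)^{n}$. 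One can then compare it with the known result (Theorem~\ref{thm:aper}, via Dubickas's criterion) that this word is not ultimately periodic. If that comparison does not close, the fallback is to cite the stronger form of Theorem~\ref{thm:aper}'s source: Dubickas shows that the orbit of $\lfloor \xi(3/2)^{n}\rfloor$ cannot have a rational $\xi=c(\beta)$, because rational $\xi$ makes $w$ a $\mathbb{Z}_{2}$-expansion that is eventually periodic.

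Granting the exclusion of the rational branch, Theorem~\ref{thm:af} leaves only the transcendental branch, so $S$ is transcendental and hence so is $\Kc$. The contrapositive is the stated equivalence.
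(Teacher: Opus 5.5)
Your treatment of the irrational branch is correct and is exactly the paper's Theorem~\ref{thm:t1a}: you apply Theorem~\ref{thm:af} at $\alpha=2/3$ and transfer the conclusion to $\Kc$ through $\Kc=x_{0}+\tfrac13 S$. The gap is in the rational branch. You try to exclude ``$\Kc$ rational'' \emph{unconditionally}, by showing that a rational $\Kc$ forces $w$ to be eventually periodic. Combined with Theorem~\ref{thm:aper}, that would prove $\Kc$ irrational for every $x_{0}\ge1$. This is the Wang--Washburn question, open since 1977, which the paper explicitly leaves open.

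None of your mechanisms can deliver it. The deviations $t_{n}=\{\Kc(3/2)^{n}\}$ lie in $\frac{1}{b2^{n}}\Z\cap[0,1)$, and their denominators grow. So there is no finite state space from which a pigeonhole could extract a period, unlike in an integer base. The $2$-adic route is refuted by Proposition~\ref{prop:padic}: in $\Qtwo$ the series $\sum_{j}w_{j}(2/3)^{j}$ equals $-3x_{0}$ for every $x_{0}$, a rational, whatever $\Kc$ is, while $w$ is never eventually periodic. So ``rational $2$-adic value $\Rightarrow$ eventually periodic digits'' is false in this base, and the real number $\Kc$ is invisible $2$-adically in any case. The ``stronger form'' of Dubickas you plan to fall back on does not exist; if it did, the irrationality of $\Kc$ would not be open. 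This is the point of Remark~\ref{rem:branch}: the rational disjunct of Theorem~\ref{thm:af} cannot be closed by arithmetic knowledge of $\Kc$, because none is available.

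The missing idea is to spend the automaticity hypothesis a second time, in the rational branch, through complexity. By Cobham, an automatic $w$ has $\cx(w,m)=O(m)$. The paper shows instead that a rational $\Kc=\delta$ forces $\limsup_{m}\cx(w,m)/m=\infty$ (Theorem~\ref{thm:t1arat}). The chain is as follows.
\begin{enumerate}
\item A repeated factor of length $k$ at $(a,c)$ gives $\nrm{\delta((3/2)^{c}-(3/2)^{a})}\le(2/3)^{k}$. This is Proposition~\ref{prop:violator}, from the circuit sums and $\theta_{n}\in[0,1)$.
\item The violator sets $\mathcal V(\delta,\vartheta)$ are finite (Theorem~\ref{thm:kernel}). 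This comes from the Subspace Theorem, via Corvaja--Zannier on bounded gaps and the repaired pair theorem on unbounded gaps, after discarding the degeneracy box of Lemma~\ref{lem:degen}.
\item A bound $\cx(w,k)\le Ck$ yields, by pigeonhole, a repeated factor with $2\le a<c\le(C+2)k$. At a rational Bernoulli scale $\vartheta$ with $2/3\le\vartheta^{C+2}$, this is a violator, so $c\le M$.
\item The growth ceiling of Theorem~\ref{thm:ceiling} then gives $k\le c+x_{0}\le M+x_{0}$, which contradicts the choice $k=M+x_{0}+1$.
\end{enumerate}
This Diophantine input is of an entirely different kind from aperiodicity, and it is what the rational half actually requires.
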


The reading with content is the first implication: it concludes the \emph{transcendence} of a
constant not known to be \emph{irrational}, from a combinatorial hypothesis on
the word. Theorem~\ref{thm:main} is proved in \S\ref{sec:main} by splitting the
algebraic case in two and closing each half by a different engine --- the
Mahler-method alternative of Adamczewski and Faverjon on the irrational half
(\S\ref{sub:af}), and the Subspace Theorem on the rational half
(\S\ref{sub:kernel}). We stress at once what it does not do.

\begin{remark}[scope]\label{rem:scope}
$\Kc$ is expected to be transcendental, so the hypothesis of the second form of
Theorem~\ref{thm:main} is plausibly vacuous, and the theorem is \emph{not} a
proof that $w$ is non-automatic: the generic, transcendental case is untouched.
\end{remark}

The rational half yields a dichotomy of independent interest, in which both horns
are statements one would like to have and neither is available alone.

\begin{theoremintro}\label{thm:dich}
Let $x_{0}\ge1$. Then either the complexity of $w$ exceeds every linear bound
--- for every $C$ there is an $m\ge1$ with $\cx(w,m)>Cm$, i.e.\
$\limsup_{m}\cx(w,m)/m=\infty$ --- or $\Kc$ is irrational.
\end{theoremintro}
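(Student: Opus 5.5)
The plan is to argue by contraposition: assume $\Kc=a/b$ is rational and that $\cx(w,m)\le Cm$ for all $m\ge1$, and derive that $w$ is eventually periodic, which contradicts Theorem~\ref{thm:aper}. The contradiction will come from a Diophantine approximation argument of Adamczewski--Bugeaud type, with the Subspace Theorem as the engine.

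\emph{Step 1: combinatorial input.} First I extract repetitions from the linear complexity bound. The standard pigeonhole goes as follows. Among the $\cx(w,m)+1\le Cm+1$ factors of length $m$ starting at positions $0,\dots,Cm$, two must coincide. This produces, for infinitely many $m$, finite words $U_m,V_m$ with the following properties:
\begin{itemize}
\item $U_mV_m^{s}$ is a prefix of $w$ for some rational exponent $s>1$;
\item $|U_m|+|V_m|\le (C+1)m$ and $|U_mV_m^s|\ge |U_m|+|V_m|+m$.
\end{itemize}
So along this sequence there is a uniform $\varepsilon>0$ with $|U_mV_m^s|\ge(1+\varepsilon)(|U_m|+|V_m|)$, and $|V_m|\to\infty$ (otherwise $w$ would be eventually periodic). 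Write $u=|U_m|$, $v=|V_m|$ and $L=|U_mV_m^s|$.

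\emph{Step 2: rational approximants.} Let $w^{(m)}=U_mV_m^\infty$ and
\[
\rho_m:=x_{0}+\tfrac13\sum_{j\ge0}w^{(m)}_j(2/3)^j .
\]
Summing the geometric tail gives
\[
\rho_m=\frac{P_m}{3^{u}(3^{v}-2^{v})}
\]
with $P_m\in\Z$, and $|P_m|\ll 3^{u+v}$. Since $w$ and $w^{(m)}$ agree on their first $L$ letters,
\[
|\Kc-\rho_m|\ll(2/3)^{L}.
\]
Now use the rationality $\Kc=a/b$: the integer
\[
\Lambda_m:=a\,3^{u}(3^{v}-2^{v})-bP_m=a\,3^{u+v}-a\,3^u2^v-bP_m
\]
satisfies
\[
|\Lambda_m|\ll 3^{u+v}(2/3)^{L}.
\]
Because $L\ge(1+\varepsilon)(u+v)$, this bound decays exponentially in $u+v$ only if $2^{1+\varepsilon}\cdot3^{-\varepsilon}<1$, which generally fails. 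This is why the archimedean estimate alone is insufficient and the $2$- and $3$-adic places must be brought in.

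\emph{Step 3: the Subspace Theorem.} Apply the Subspace Theorem with $S=\{\infty,2,3\}$ to the integer points $\mathbf{x}_m=(3^{u+v},\,3^u2^v,\,P_m)$, with the linear forms chosen as follows:
\begin{itemize}
\item at $\infty$: the forms $X_1$, $X_2$ and $aX_1-aX_2-bX_3$;
\item at $3$: the forms $X_1,X_2,X_3$;
\item at $2$: the forms $X_1,X_2,X_3$.
\end{itemize}
The $S$-unit coordinates contribute $p$-adic smallness: the product of $|3^{u+v}|_3$ over the relevant places cancels $3^{u+v}$, and $|3^u2^v|_2|3^u2^v|_3$ equals $(3^u2^v)^{-1}$. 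The double product is then $\ll |\Lambda_m|\cdot 3^{u+v}/ (3^{u+v}\cdot 3^u2^v)$ times $|P_m|$, and this should be $\ll H(\mathbf{x}_m)^{-\delta}$ precisely because $L\ge(1+\varepsilon)(u+v)$ combined with the extra factor $2^{-v}3^{-u}$.

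The main obstacle is balancing this exponent bookkeeping. If the bare choice above is not enough, I would first try four-dimensional points that also record $2^{L}$, by rewriting $\Lambda_m$ with the exact tail $x_L(2/3)^L$ via Proposition~\ref{prop:partial}, so that $\Lambda_m$ becomes an exact integer combination of $S$-units plus $x_L$.

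\emph{Step 4: conclusion.} The Subspace Theorem then puts infinitely many $\mathbf{x}_m$ in a single proper rational subspace, which gives a fixed relation $c_1 3^{u+v}+c_2 3^u2^v+c_3P_m=0$. Dividing by the denominator shows that $\rho_m$ is an affine function of $(2/3)^{v}$ with fixed coefficients. Comparing this with $|\Kc-\rho_m|\to0$ forces $\rho_m=\Kc$ along a subsequence, and hence $w=U_mV_m^\infty$ exactly. This contradicts Theorem~\ref{thm:aper}.
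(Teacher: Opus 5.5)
Your argument breaks at Step~3, the only step that does Diophantine work. The count there is wrong. An $S$-unit neither costs nor gains anything, because $|y|\,|y|_{2}\,|y|_{3}=1$ for $y=3^{u+v}$ and for $y=3^{u}2^{v}$, and $X_{1}$, $X_{2}$ occur among the forms at every place of $S$. The ``extra factor $2^{-v}3^{-u}$'' you credit yourself is exactly cancelled by the archimedean factor $|X_{2}(\mathbf x_{m})|=3^{u}2^{v}$, which your display drops.

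The true double product is $|\Lambda_{m}|\,|P_{m}|_{2}\,|P_{m}|_{3}$. With only what you record about $P_{m}$ (an integer of size $\ll3^{u+v}$), the best you can say is that it is $\le|\Lambda_{m}|\ll3^{u+v}(2/3)^{L}$, which is the quantity you already found too large in Step~2. This beats $H(\mathbf x_{m})^{-\delta}$ only if $L>\frac{\log3}{\log(3/2)}(u+v)$, i.e.\ only if the repetition length $L-(u+v)$ exceeds $1.7095\,(u+v)$. But Theorem~\ref{thm:ceiling} caps every repeated factor at about $0.585\,(u+v)$ (Corollary~\ref{cor:slope}). So, as written, the inequality fails for every family the word can produce, not merely ``generally''. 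This is exactly the count of \S\ref{sub:ledger}. Your four-dimensional fallback is not carried out and does not identify a source of gain.

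What that count omits, and what you need, is the $2$-adic size of $P_{m}$. Put $c=u+v$. The tails $\frac13\sum_{i\ge0}w^{(m)}_{n+i}(2/3)^{i}$ of the periodized word agree at $n=u$ and $n=c$. Its partial orbit ($z_{0}=x_{0}$, $2z_{n+1}=3z_{n}+w^{(m)}_{n}$) equals $x_{n}$ for $n\le L$. Hence $\rho_{m}\bigl((3/2)^{c}-(3/2)^{u}\bigr)=x_{c}-x_{u}$, that is, $P_{m}=2^{c}(x_{c}-x_{u})$. Theorem~\ref{thm:rigid} adds $2^{L-c}\mid x_{c}-x_{u}$, so $|P_{m}|_{2}\le2^{-L}$; Proposition~\ref{prop:padic} gives the same bound directly. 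Since $\Lambda_{m}=b\,2^{c}(\theta_{c}-\theta_{u})$, the product is at most
\[
b\,2^{c}(2/3)^{L-c}\,2^{-L}=b\,3^{-(L-c)} .
\]
This is $\le H(\mathbf x_{m})^{-\delta}$ for a fixed $\delta>0$, because $H(\mathbf x_{m})\ll3^{c}$ and $L-c\ge c/C$.

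Two smaller repairs are also needed.
\begin{itemize}
\item $|V_{m}|\to\infty$ does not follow from aperiodicity, since bounded-period repetitions of growing length are compatible with it. It is also genuinely needed: for fixed $v$ all the $\mathbf x_{m}$ lie in the plane $2^{v}X_{1}=3^{v}X_{2}$, and the Subspace Theorem says nothing. Replace a short $V_{m}$ by a power $V_{m}^{j}$ of length about $m/2$. This changes neither $\rho_{m}$ nor $L$, and keeps the repetition length $\gg m$.
\item In Step~4, $\rho_{m}=\Kc$ does not yield $w=U_{m}V_{m}^{\infty}$, because $w\mapsto\sum_{j}w_{j}(2/3)^{j}$ is not injective on binary words. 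Instead, $|\rho_{m}|_{2}\le2^{-L_{m}}$ along infinitely many $m$ forces $\Kc=0$, contradicting $\Kc\ge x_{0}\ge1$. Alternatively, $\Lambda_{m}=0$ contradicts Lemma~\ref{lem:degen} for large $c$.
\end{itemize}

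Repaired this way, your route goes through and is a genuine alternative to the paper's, which never periodizes. The paper proves the violator set finite (Theorem~\ref{thm:kernel}, with a Ridout step for bounded gaps and Lemma~\ref{lem:degen} for exact integers). It then closes with pigeonhole, Proposition~\ref{prop:violator} and the ceiling. The Diophantine core is in substance shared: a three-term Subspace application over $\{\infty,2,3\}$ in which $2^{c}$ times an integer supplies the $2$-adic gain.
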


Behind the first horn stands a rigidity theorem with a one-line reformulation
that organizes everything in this paper: length-$m$ factors of $w$ agree
when the orbit values agree modulo $2^{m}$ (Theorem~\ref{thm:rigid}), so
$\cx(w,m)$ \emph{is} the number of residues modulo $2^{m}$ that the orbit
occupies (Proposition~\ref{prop:residues}), and Theorem~\ref{thm:dublin} becomes
a pigeonhole (Corollary~\ref{cor:dubfloor}).

Beyond Theorems~B and~C, \S\ref{sub:algmult} begins the extension of the
Diophantine kernel of \S\ref{sub:kernel} to algebraic multipliers: the
bounded-gap half is proved (on the printed
algebraic-$\delta$ form of the Main Theorem of \cite{CZ04}); the combinatorial
step that turns such a finiteness statement into a superlinear complexity bound
--- pigeonhole, contraction and the growth ceiling, that is, everything in
\S\ref{sub:kernel} except its Diophantine input --- is shown to need no
rationality at all; and the upgrade of
Theorem~\ref{thm:main} to superlinear complexity in every algebraic case is
thereby reduced to a single named statement, a pair theorem over $\Q(\Kc)$.
Unconditionally, close repetitions --- long repeats at bounded distance ---
die out whenever $\Kc$ is algebraic (Corollary~\ref{cor:closerep}), a second
transcendence criterion with a periodicity-adjacent trigger.

Every statement above and below has been formally verified in Lean~4
\emph{except} two classes of statement: the four cited inputs of
\S\ref{sub:axioms}, which are \emph{assumed} and not proved, and the
computations of \S\ref{sub:outlook}, which are not formalized at all.
Everything else is proved in Lean on top of those four axioms.
Appendix~\ref{app:af} describes the formalization of
Theorem~\ref{thm:af} and Appendix~\ref{app:lean} records the Lean name, file and
axiom footprint of each formalization.

\subsection{Cited inputs}\label{sub:axioms}

Four statements are used as black boxes. Each is stated here with a citation and
no proof; on their status as sources see Remark~\ref{rem:sources} below. The
first two are what is left of
Theorem~\ref{thm:af} once it is proved rather than cited: they lie one level
below it, and through it they carry the main results. The third carries the
rational half. The fourth enters only the algebraic-multiplier extension of
\S\ref{sub:algmult}.

\begin{theorem}[Adamczewski--Faverjon \cite{AF17}, Lemme 2.2]\label{thm:af17reg}
Let $q\ge2$, let $A(z)$ be a matrix of polynomials over $\Qbar$ with
$\det A\ne0$, and let $f_{1},\dots,f_{n}$ be power series over $\Qbar$ solving
the $q$-Mahler system $f(z)=A(z)f(z^{q})$. Then the extension
$\Qbar(z)(f_{1},\dots,f_{n})/\Qbar(z)$ is regular --- equivalently, in
characteristic zero, $\Qbar(z)$ is relatively algebraically closed in it.
\end{theorem}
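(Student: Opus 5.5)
The plan is to prove regularity in its equivalent form: $\Qbar(z)$ is relatively algebraically closed in $L:=\Qbar(z)(f_{1},\dots,f_{n})$. Since $\Qbar(z)$ is perfect, regularity of $L/\Qbar(z)$ reduces to this closure property. Let $g\in L$ be algebraic over $\Qbar(z)$. The goal is to show $g\in\Qbar(z)$.

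First we build a Mahler structure on $L$. The substitution $\sigma\colon h(z)\mapsto h(z^{q})$ is an injective endomorphism of $\Qbar((z))$. Since $\det A\ne0$, we can write $f(z^{q})=A(z)^{-1}f(z)$, so each $\sigma(f_{i})$ lies in $L$. Also $\sigma(z)=z^{q}$. Hence $\sigma(L)\subseteq L$.

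Let $K\subseteq L$ be the relative algebraic closure of $\Qbar(z)$ in $L$. This is a finite extension of $\Qbar(z)$, because $L$ is finitely generated. It is also stable under $\sigma$, because $\sigma$ maps elements algebraic over $\Qbar(z)$ to elements algebraic over $\sigma(\Qbar(z))\subseteq\Qbar(z)$.

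Now take $g\in K$. Its images $g,\sigma g,\sigma^{2}g,\dots$ all lie in the finite-dimensional $\Qbar(z)$-vector space $K$. They are therefore linearly dependent, and $g$ satisfies a linear Mahler equation
\[
\sum_{i=0}^{d}p_{i}(z)\,g(z^{q^{i}})=0,\qquad p_{i}\in\Qbar[z],\ \text{not all zero}.
\]

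The key step is a classical fact: a Laurent series over $\Qbar$ that is algebraic over $\Qbar(z)$ and also $q$-Mahlerian is rational. This is the Mahler analogue of the result of Fatou/Nishioka. The plan for this step is as follows.
\begin{enumerate}
\item Use the Newton--Puiseux expansion to show that an algebraic series is a rational function of finitely many Puiseux branches.
\item Compare the branches at a singular point $\zeta\ne0$. The Mahler relation propagates singularities from $\zeta$ to $\zeta^{1/q^{i}}$, or equivalently along the orbit $\zeta\mapsto\zeta^{q}$.
\item An algebraic function has only finitely many singularities. So every nonzero singularity must lie on a finite set closed under the relevant map, which means it is a root of unity or $0$.
\item Rule out branching at roots of unity and at $0,\infty$ by comparing ramification indices. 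If $g$ has ramification $e$ at $\zeta$, the equation forces compatible ramification at $\zeta^{q}$, and iterating around a finite cycle with $q\ge2$ forces $e=1$.
\end{enumerate}
This shows $g$ is single-valued and meromorphic on $\mathbb{P}^{1}$, hence $g\in\Qbar(z)$.

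An alternative that may be cleaner is to argue by degree. Let $m=[\Qbar(z,g):\Qbar(z)]$. Then $\sigma$ maps $\Qbar(z,g)$ into $K$, and the tower $\Qbar(z^{q})\subset\Qbar(z)$ with the restriction of $\sigma$ lets one compare the genus, or the ramification divisor, of the curve of $g$ with its pullback under $z\mapsto z^{q}$. By Riemann--Hurwitz, the field $\Qbar(z,g)$ is isomorphic to a subfield of its own Kummer-type pullback of degree $q$. Applied to the finite $\sigma$-stable field $K$ itself, this forces the genus of $K$ to be $0$ and ultimately forces $K=\Qbar(z)$.

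I expect the main obstacle to be the ramification and monodromy bookkeeping at roots of unity and at $0$, where the cycle argument needs care. The regularity claim is that $K=\Qbar(z)$, and it would fail if, for instance, $z^{1/(q-1)}$-type elements could lie in $L$. Such elements must be excluded using the fact that the $f_{i}$ are honest power series in $z$ with no fractional exponents. That power-series hypothesis is exactly what should kill ramification at $0$.
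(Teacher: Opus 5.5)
Your reduction is correct and follows the route the paper indicates. The paper does not prove this statement; it cites it and notes that the proof runs through Nishioka's theorem that an algebraic $q$-Mahler function is rational. Your steps are sound: $\sigma(L)\subseteq L$ (from $\det A\ne0$), the relative algebraic closure $K$ is finite and $\sigma$-stable, and so every $g\in K$ satisfies a linear Mahler equation. Citing Nishioka at that point would finish the proof.

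\textbf{The gap is in your sketch of that theorem, at step~4.} At a root of unity $\zeta$ the map $z\mapsto z^{q}$ is locally biholomorphic. So in $\sum_{i}p_{i}(z)g(z^{q^{i}})=0$ the term $g(z^{q^{i}})$ has at $\zeta$ exactly the ramification of $g$ at $\zeta^{q^{i}}$. No factor $q$ ever enters, and a cycle gives no contradiction; already the fixed point $\zeta=1$ makes the constraint a tautology.

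The ``equivalently'' of step~2 hides the same problem. Closure under $\zeta\mapsto\zeta^{q}$ holds for any set of roots of unity stable under $q$-th powers, which is why step~3 only reaches ``root of unity''. What excludes branching is the \emph{backward} propagation, applied to every preimage. It works only for branch points, since poles at roots of unity do occur: $f=1/(1-z)$ satisfies $(1+z)f(z^{2})=f(z)$.

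Your alternative stops at the same place. Riemann--Hurwitz for the self-map $\phi$ induced by $\sigma$, of degree $[K:\sigma K]=q$, gives genus $\le1$. Total ramification of $\phi$ at the $z$-adic place then gives genus $0$. But genus $0$ does not give $K=\Qbar(z)$: $\Qbar(z^{1/m})$ with $z^{1/m}\mapsto z^{q/m}$ is $\sigma$-stable of genus $0$. So ``ultimately forces'' is exactly the missing step.

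\textbf{A repair of your first route.} Take $d$ minimal, so that $V=\sum_{i<d}\Qbar(z)\,g(z^{q^{i}})$ is $\sigma$-stable. Then for every $D$,
\[
P_{D}(z)\,g(z^{q^{D}})=\sum_{i<d}R_{D,i}(z)\,g(z^{q^{i}}),\qquad P_{D}\ne0 .
\]
Let $B\subset\mathbb{C}^{*}$ be the finite set of branch points of $g$, and let $\zeta\in B$.
\begin{enumerate}
\item Lift a continuation path exhibiting the branching at $\zeta$ through $z\mapsto z^{q^{D}}$; you can make it end at any prescribed $\xi$ with $\xi^{q^{D}}=\zeta$.
\item The left side is branched at $\xi$, because a nonzero polynomial factor cannot undo branching. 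Hence $\xi^{q^{i}}\in B$ for some $i<d$.
\item So all $q^{D}$ such $\xi$ lie in $\bigcup_{i<d}\{\eta:\eta^{q^{i}}\in B\}$, which has fewer than $|B|\,q^{d}$ points. For large $D$ this is absurd, so $B=\emptyset$, roots of unity included.
\item The Riemann surface of $g$ is then a connected cover of $\mathbb{P}^{1}$ branched only over $0$ and $\infty$. By Riemann--Hurwitz it is totally ramified over $0$, while the branch $g\in\Qbar((z))$ is unramified there. So the degree is $1$.
\end{enumerate}

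In the curve language of your alternative, the factor $q$ you were after does exist, but it comes from $\phi$, not from the Mahler equation. Let $C$ be the curve of $K$, $\pi\colon C\to\mathbb{P}^{1}$ the map given by $z$, and $R^{*}$ the set of ramification points of $\pi$ off $\pi^{-1}\{0,\infty\}$. There $e_{\pi}(Q)=e_{\pi}(\phi(Q))\,e_{\phi}(Q)$, which gives $\phi^{-1}(R^{*})\subseteq R^{*}$. Surjectivity then makes $\phi$ a permutation of $R^{*}$, totally ramified on it, so $e_{\pi}(Q)=q\,e_{\pi}(\phi(Q))$ on $R^{*}$. Now a cycle is a genuine contradiction, so $R^{*}=\emptyset$, and you finish with Riemann--Hurwitz and the $z$-adic place as above.
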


This is \cite[Lemme 2.2]{AF17}. It is what allows a relation whose coefficients
lie in the solution field to be split into relations with coefficients in
$\Qbar(z)$, and it is the only step of the route below that reaches back into the
analytic theory of Mahler functions: its own proof runs through Nishioka's
theorem \cite[Thm.~5.1.7]{Nis96}, in the form ``an algebraic $q$-Mahler function
is rational''. Characteristic zero is not cosmetic --- over $\mathbb{F}_{p}(z)$,
Christol's theorem \cite{Chr79} makes the algebraic power series exactly the
$p$-automatic ones, all of them Mahlerian, and the statement is false.

\begin{theorem}[Adamczewski--Faverjon \cite{AF22}, Lemma 2.8]\label{thm:af22branch}
Let $f_{1},\dots,f_{m}$ solve a $q$-Mahler system and converge on a disc about
$0$, let $\alpha$ be algebraic with $0<|\alpha|<1$, and let $\varphi(z)$ be a
relation matrix for them, invertible over the ambient field of algebraic
functions. Then for all sufficiently large $k$: \emph{(a)} $\alpha^{q^{k}}$ lies
in the disc of convergence of each $f_{i}$; \emph{(b)} each coordinate of
$\varphi(z)$ defines an analytic function on some neighbourhood of
$\alpha^{q^{k}}$; \emph{(c)} the matrix $\varphi(\alpha^{q^{k}})$ is invertible.
\end{theorem}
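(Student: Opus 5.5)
The plan is to reduce all three clauses to one fact: the points $\alpha^{q^{k}}$ tend to $0$, while each relevant object has only finitely many bad points in a punctured disc about $0$. Since $0<|\alpha|<1$ and $q\ge2$, we have $|\alpha^{q^{k}}|=|\alpha|^{q^{k}}\to0$ monotonically, and $\alpha^{q^{k}}\neq0$ for every $k$. It therefore suffices to produce, for each of \emph{(a)}, \emph{(b)}, \emph{(c)}, a radius $\rho>0$ such that the corresponding property holds at every point $z$ with $0<|z|<\rho$. We then take the minimum of the finitely many radii and choose $k_{0}$ with $|\alpha|^{q^{k_{0}}}<\rho$.

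\emph{(a)} is immediate. Each $f_{i}$ converges on a disc $|z|<r_{i}$ with $r_{i}>0$, so we take $\rho\le\min_{i}r_{i}$.

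For \emph{(b)}, we treat each coordinate $\varphi_{ij}$ of the relation matrix as an algebraic function over $\Qbar(z)$. We realize it in the ambient field as a convergent Puiseux or Laurent expansion at $0$, which is analytic on a punctured disc, or on a slit punctured disc if ramified. Let $P_{ij}(z,Y)=\sum_{s}a_{s}(z)Y^{s}$ be its minimal polynomial over $\Qbar[z]$. The points where a branch can fail to be analytic are the zeros of the leading coefficient and of the discriminant of $P_{ij}$ in $Y$. Since both are nonzero polynomials, this singular set is finite, so some punctured disc $0<|z|<\rho_{ij}$ avoids it. At any point $z_{0}\neq0$ of that disc, the implicit function theorem shows that the chosen determination of $\varphi_{ij}$ continues analytically to a small simply connected neighbourhood of $z_{0}$. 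This holds even in the ramified case, because a slit can be chosen to avoid any prescribed nonzero point, or the expansion can simply be continued locally.

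For \emph{(c)}, the determinant $\Delta=\det\varphi$ is again an algebraic function, computed by the same polynomial identity in the entries. It is nonzero because $\varphi$ is invertible over the ambient field. The value of the determinant of the evaluated matrix is $\Delta$ evaluated at the point, since evaluation of the analytic germs chosen in \emph{(b)} is a ring homomorphism on the local ring at $\alpha^{q^{k}}$. Now let $\sum_{s}b_{s}(z)Y^{s}$ be the minimal polynomial of $\Delta$, taken without loss of generality with $b_{0}\neq0$ (it is irreducible and $\Delta\neq0$). Any zero $z_{0}$ of $\Delta$ satisfies $b_{0}(z_{0})=0$, so the zeros of $\Delta$ away from $0$ are finite in number. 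Shrinking $\rho$ to exclude them makes $\varphi(\alpha^{q^{k}})$ invertible for large $k$.

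The step I expect to be the main obstacle is the bookkeeping in \emph{(b)}--\emph{(c)}. We must ensure that ``the'' analytic function defined by $\varphi_{ij}$ near $\alpha^{q^{k}}$ is the continuation of the specific element of the ambient field, and not some other root of $P_{ij}$. We must also ensure that $\det$ commutes with this choice. I would handle both by fixing, once and for all, an embedding of the ambient field into germs of multivalued analytic functions on a slit punctured disc, and doing all evaluations inside that ring.
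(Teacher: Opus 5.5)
Your proposal is correct and is essentially the argument the paper sketches in Remark~\ref{rem:sources}: each entry of $\varphi$ has an analytic branch off a finite set of points, the nonzero determinant vanishes at only finitely many points, and the pairwise distinct nonzero points $\alpha^{q^{k}}\to0$ eventually avoid that set. You use the same three ingredients the paper names (a resultant/discriminant vanishing criterion, Newton--Puiseux, and an implicit function theorem), and your bookkeeping device is the right one, with one proviso matching the subring interface of Appendix~\ref{app:af}: the fixed embedding should be into germs at $0$, and evaluation at $\alpha^{q^{k}}$ is a ring homomorphism only on the finitely generated ring $\Qbar[z][\varphi_{ij}]$, not on the whole ambient field.
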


This is \cite[Lemma 2.8]{AF22}, the one point at which the algebra of the new
proof meets analysis; it is cited because the ingredients of its four-line
literature proof --- a vanishing criterion for the resultant, Newton--Puiseux,
and an implicit function theorem for algebraic function elements --- are absent
from the formalized library. The formalization cites a slightly wider form. The
object Theorem~\ref{thm:af22branch} produces --- a branch of $\varphi$ near
$\alpha^{q^{k}}$, and with it a realization of the algebraic functions in play by
honest analytic ones --- is consumed five times, and the four further properties
consumed of it (the solutions realized alongside $\varphi$, which is clause
\emph{(a)}; that realization analytic and injective; the neighbourhood connected;
and the corresponding branch of the twisted matrix $\varphi(z^{q^{k}})$ at
$\alpha$, which takes the same values) all hold of that one object by
construction. Folding them into the citation keeps the count of cited axioms at
two rather than five; Appendix~\ref{app:af} says why none of them is a further
assumption.

\begin{remark}[on the sources]\label{rem:sources}
All four are refereed. Theorem~\ref{thm:af22branch} moreover asks the least
trust of the four, since the statement itself is classical and is not what
\cite{AF22} is for: an element algebraic over $\Qbar(z)$ has an analytic
branch at every point outside a finite set, and a matrix with nonzero
determinant is singular only at finitely many points; since the $\alpha^{q^{k}}$
are pairwise distinct and tend to $0$, \emph{(b)} and \emph{(c)} therefore fail
for at most finitely many $k$, and \emph{(a)} holds for all large $k$. What is
not classical --- and what is done here
rather than cited --- is everything \cite{AF22} builds on top of it.
\end{remark}

\begin{theorem}[Subspace Theorem; Schmidt \cite{Sch91}, Evertse--Schlickewei
form \cite{BG06}]\label{thm:subspace}
Let $K$ be a number field, $n\ge2$, and $S$ a finite set of places of $K$
containing the archimedean ones. For each $v\in S$ let $L_{v,1},\dots,L_{v,n}$
be linearly independent linear forms in $X_{1},\dots,X_{n}$ with coefficients in
$K$. For every $\varepsilon>0$, the nonzero $x\in K^{n}$ with
\[
\prod_{v\in S}\prod_{i=1}^{n}\frac{|L_{v,i}(x)|_{v}}{\nrm{x}_{v}}
\ \le\ H(x)^{-n-\varepsilon}
\]
lie in finitely many proper linear subspaces of $K^{n}$.
\end{theorem}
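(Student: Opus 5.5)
Since Theorem~\ref{thm:subspace} enters this paper as a cited axiom, what follows only sketches how I would reprove it along Schmidt's route, in the $S$-adic form of Schlickewei and Evertse. \emph{Normalization.} I would replace each system $L_{v,1},\dots,L_{v,n}$ by one of determinant $1$, which changes the left-hand side only by a bounded factor. I would then use the product formula to rewrite the inequality as a family of twisted height conditions. Concretely, for a solution $x$ I would choose exponents $c_{v,i}$, taken from a finite grid, with
\[
|L_{v,i}(x)|_{v}\le H(x)^{c_{v,i}}\nrm{x}_{v},\qquad \sum_{v,i}c_{v,i}\le -\varepsilon .
\]
A pigeonhole over the grid reduces the theorem to a single tuple $(c_{v,i})$. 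For that tuple and a parameter $Q\ge1$, I would define the adelic convex body $\Pi(Q)$ cut out by $|L_{v,i}(X)|_{v}\le Q^{c_{v,i}}$. Each solution of height $H$ is then a nonzero point of $\Pi(H)$, so the first successive minimum satisfies $\lambda_{1}(Q)\le 1$ along an unbounded set of $Q$, while Minkowski's second theorem gives $\prod_{i}\lambda_{i}(Q)\asymp Q^{-\sum c_{v,i}}$.

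\emph{Parametric subspace theorem.} The target is the parametric statement: there is a fixed proper subspace $T$, and finitely many such subspaces overall, containing $\Pi(Q)\cap K^{n}$ for all large $Q$ outside a sparse exceptional set. I would derive it by induction on $n$ through Davenport's lemma and Mahler's theory of compound bodies. Passing to the exterior powers $\bigwedge^{k}$ turns the statement about the span of the first $k$ minima into a statement about the first minimum in dimension $\binom{n}{k}$. The inductive step therefore reduces to showing that the ``gap'' $\lambda_{k+1}(Q)/\lambda_{k}(Q)$ is large for some $k$; when it is, the span of the first $k$ minima is locally constant in $Q$, and that span is the subspace $T$.

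\emph{Diophantine core.} The gap comes from Roth's method run on $m$ blocks of variables. Suppose there were solutions $x^{(1)},\dots,x^{(m)}$ of rapidly increasing heights not lying in the predicted subspaces. By Siegel's lemma I would build a multihomogeneous auxiliary polynomial $F$ of small height that vanishes to high weighted index along the subspaces attached to the forms $L_{v,i}$. The inequality then forces a suitable derivative of $F$, evaluated at $(x^{(1)},\dots,x^{(m)})$, to be a nonzero algebraic number, since $F$ does not vanish there, whose product over all places is $<1$. This contradicts the product formula.

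\emph{Main obstacle.} The hard step is the nonvanishing: showing that $F$ does not have large index at the tuple of solutions. For this I would first try Faltings' product theorem, in Evertse's streamlined form, rather than Schmidt's original Roth's lemma combined with the heavy geometry of numbers; this is also what makes the finiteness of the set of subspaces effective, as in Evertse--Schlickewei. The induction over $k$ then reassembles the parametric statement for each tuple, and the finite union over the grid of exponent tuples gives finitely many proper subspaces.
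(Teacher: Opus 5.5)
The paper gives no proof to compare with. Theorem~\ref{thm:subspace} is one of the four cited inputs, assumed in Lean as \texttt{Subspace.evertseSchlickewei}. Measured against Schmidt's proof, which the citation points to, your sketch has a genuine gap in the Diophantine core. You run Roth's method on solutions $x^{(1)},\dots,x^{(m)}$ and need a low-order derivative of $F$ to be nonzero at that tuple. For $n=2$ this is fine, since it is the $S$-adic Roth theorem. For $n\ge3$, however, no such pointwise non-vanishing exists. Roth's lemma lives on $(\mathbb{P}^{1})^{m}$, and its analogue on $(\mathbb{P}^{n-1})^{m}$ is false: $\prod_{h}\ell(X^{(h)})^{d_{h}}$ has small height and the maximal index $m$ at every tuple of points on a fixed hyperplane $\ell=0$, however fast their heights grow. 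Nothing stops your Siegel polynomial from behaving the same way. Faltings' product theorem does not rescue the pointwise version either. At a point it only places each $x^{(h)}$ on some $Z_{h}$ of bounded degree, and once $\dim Z_{h}>0$ the large height of $x^{(h)}$ contradicts nothing. You also cannot exclude the $Z_{h}$ in advance, because they come from $F$, which is built after the points are chosen (and ``the predicted subspaces'' are what is being proved).

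This is exactly where the geometry of numbers is indispensable rather than organizational. First the gap is moved to the penultimate position, using the compound bodies and Davenport's lemma you name, but put to that use. In your $\bigwedge^{k}$ picture, the isolated first minimum must be read dually, as the hyperplane spanned by the first $\binom{n}{k}-1$ minima of the polar of the compound body. The objects fed to Roth's method are then hyperplanes $T(Q_{h})$ of large height spanned by successive-minimum points, i.e.\ by good approximations. Along $T(Q_{1})\times\cdots\times T(Q_{m})$ non-vanishing does hold. A product component $Z_{1}\times\cdots\times Z_{m}$ of the large-index locus containing it has each $Z_{h}\in\{T(Q_{h}),\mathbb{P}^{N-1}\}$ ($N$ the ambient dimension), and not all equal $\mathbb{P}^{N-1}$ since $F\ne0$. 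So some $Z_{h}=T(Q_{h})$, and the product theorem's height bound contradicts the large height of $T(Q_{h})$ for the chosen parameters. The nonzero value is then taken at points of the $T(Q_{h})$ built from their minimum points, not at solutions.

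The sketch also puts Roth's method in the wrong place. The gap is free: from $\lambda_{1}(Q)\le1$ and $\prod_{i}\lambda_{i}(Q)\gg Q^{\varepsilon}$, Minkowski's second theorem alone forces some $\lambda_{k+1}(Q)/\lambda_{k}(Q)$ to exceed a fixed positive power of $Q$. So ``the inductive step reduces to showing the gap is large'' and ``the gap comes from Roth's method'' invert the logic. What Roth's method must supply is that the subspaces at the gap run through a finite set as $Q\to\infty$; local constancy in $Q$ covers only nearby $Q$. The parametric statement must moreover hold for all $Q\ge Q_{0}$. An exceptional set of $Q$, sparse or not, leaves the solutions of those heights uncovered.

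Smaller repairs:
\begin{itemize}
\item With the factor $\nrm{x}_{v}$ kept, the hypothesis gives $\sum c_{v,i}\le-n-\varepsilon$, not $-\varepsilon$.
\item To reach $\sum c_{v,i}\le-\varepsilon$ with $x\in\Pi(H(x))$, first pass to a primitive $S$-integral representative, so that $\prod_{v\in S}\nrm{x}_{v}=H(x)$. To do this, enlarge $S$ until the $S$-integers form a principal ideal domain, which is harmless because adding places with $L_{v,i}=X_{i}$ only weakens the inequality. Then balance the $\nrm{x}_{v}$ by an $S$-unit.
\item The solutions with some $L_{v,i}(x)=0$ must be placed in the hyperplanes $L_{v,i}=0$ before Liouville's inequality makes the grid finite.
\end{itemize}

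Finally, Evertse--Schlickewei make the \emph{number} of subspaces explicit, not the subspaces effective. For $n=2$, effectivity would be an effective Roth theorem, and the theorem's ineffectivity is in line with Remark~\ref{rem:effledger}.
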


This is \cite[Thm.~1D$'$]{Sch91}. It is the only Diophantine input of
\S\ref{sub:kernel}: both of the approximation theorems used there --- the Main
Theorem of Corvaja and Zannier \cite{CZ04} and the modified pair theorem of Nair, Kumar
and Rout \cite{NKR25} --- are \emph{derived} from it in the formalization rather
than assumed, at $n=2$ and $n=3$ respectively.

\begin{remark}[on \cite{NKR25}]\label{rem:nkr}
Nothing is taken on the authority of \cite{NKR25}, an unrefereed preprint whose
Theorem~1.3(i) is false as printed. Specialized to $\Q$ and to pairs, which is
how we use it, it asserts: if $\alpha_{1},\alpha_{2}\in\Q^{*}$,
$\varepsilon_{1}>0$, and an infinite family of pairs $(u_{1},u_{2})$ drawn from
a finitely generated $\Gamma\le\Q^{*}$ satisfies $|u_{i}|\ge1$,
$u_{1}\ne-u_{2}$, pairwise-distinct ratios in both orientations, and
\[
\nrm{\alpha_{1}u_{1}+\alpha_{2}u_{2}}<\bigl(H(u_{1})H(u_{2})\bigr)^{-\varepsilon_{1}},
\]
then some member of the family has both entries in $\Z$. What is missing is the
strict positivity $\nrm{\alpha_{1}u_{1}+\alpha_{2}u_{2}}>0$, which their own
Theorem~1.1(iv) does carry. Without it, take $\alpha_{1}=\alpha_{2}=1$,
$\Gamma=\langle2,3\rangle$ and $(u_{1},u_{2})=(3^{n}/2,\,3^{2n}/2)$ for $n\ge1$:
the sum is an integer by parity, so the left-hand side is $0$ and every printed
hypothesis holds, while no entry is ever an integer. That refutation is
machine-checked in the formalization. The step of their \S4.1 proof that fails is the
passage to a uniform $\varepsilon$ --- their $\kappa$, and with it
$\varepsilon$, depends on the tuple, while their Lemma~2.2 wants one
$\varepsilon$ for the whole family.

Reinstating $\nrm{\alpha_{1}u_{1}+\alpha_{2}u_{2}}>0$ as a hypothesis repairs
the statement, and that repaired form is the one used here --- derived over $\Q$
from Theorem~\ref{thm:subspace} at $n=3$ rather than cited, with the positivity
discharged by parity where it is needed (Lemma~\ref{lem:degen}). We are still in
debt of the NKR preprint as the source of an important argument.
\end{remark}

\begin{theorem}[Corvaja--Zannier \cite{CZ04}, Main Theorem]\label{thm:czalg}
Let $\Gamma\subset\overline{\Q}{}^{\,*}$ be a finitely generated multiplicative
group, let $\delta$ be a nonzero algebraic number, and let $\varepsilon>0$.
Then there are only finitely many pairs $(q,u)\in\Z\times\Gamma$,
$d:=[\Q(u):\Q]$, such that $|\delta qu|>1$, $\delta qu$ is not pseudo-Pisot,
and
\[
0\ <\ \nrm{\delta qu}\ <\ H(u)^{-\varepsilon}\,q^{-d-\varepsilon}.
\]
\end{theorem}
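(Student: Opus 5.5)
We will not prove this statement. It is the fourth cited input of \S\ref{sub:axioms}. It enters only through \S\ref{sub:algmult}, and it is assumed there as an axiom. If we were to prove it, we would follow the original strategy of Corvaja and Zannier, with Theorem~\ref{thm:subspace} as the only Diophantine engine, as is done for the rational-$\delta$ form.

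\textbf{Setup.} Fix a number field $K$, Galois over $\Q$, containing $\delta$ and a set of generators of $\Gamma$. Every $u\in\Gamma$ then has $d=[\Q(u):\Q]\le[K:\Q]$, so we may pass to an infinite subfamily on which $d$ and the field $\Q(u)$ are constant. Let $S$ consist of the archimedean places together with the finitely many places at which some generator is not a unit. Then every $u\in\Gamma$ is an $S$-unit.

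\textbf{Applying the Subspace Theorem.} Suppose there are infinitely many admissible pairs $(q,u)$, and let $p$ be the integer nearest to $\delta q u$. Let $\sigma_{1}=\mathrm{id},\dots,\sigma_{d}$ be the embeddings of $\Q(u)$. We take the point
\[
x=\bigl(p,\ q\,\sigma_{1}(u),\ \dots,\ q\,\sigma_{d}(u)\bigr)\in K^{d+1}.
\]
At the archimedean place attached to $\mathrm{id}$ we use the form $X_{0}-\delta X_{1}$, whose value is $\nrm{\delta qu}$ and hence tiny. At every other place of $S$ we use the coordinate forms. By the product formula, the coordinates $q\sigma_{i}(u)$ contribute a product of $S$-unit norms that is bounded by a power of $q$. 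The hypothesis
\[
\nrm{\delta qu}<H(u)^{-\varepsilon}q^{-d-\varepsilon}
\]
is exactly what makes the double product at most $H(x)^{-(d+1)-\varepsilon'}$: the factor $q^{-d}$ compensates the $d$ copies of $q$, and $H(u)^{-\varepsilon}$ pays for the height of the unit part. Theorem~\ref{thm:subspace} then puts all these points in finitely many proper subspaces. Passing to an infinite subfamily, we obtain one nontrivial relation
\[
a_{0}p+\sum_{i}a_{i}\,q\,\sigma_{i}(u)=0 .
\]

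\textbf{Degeneracy analysis (main obstacle).} It remains to turn this linear relation into the conclusion that $\delta qu$ is pseudo-Pisot, contradicting the hypothesis. We would divide the relation by $q$ and regard it as an $S$-unit equation in the $\sigma_{i}(u)$ with fixed coefficients. Then we would use the finiteness of nondegenerate solutions, in the Evertse--Schlickewei--Schmidt form, to extract minimal vanishing subsums. After that we would apply Galois conjugation to the surviving relation. The aim is to show that $p$ equals a trace-like sum of the conjugates $\sigma_{i}(\delta qu)$ with $|\sigma_{i}(\delta qu)|\ge1$, and that all remaining conjugates are forced to be small. This is precisely the pseudo-Pisot property.

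We expect the bookkeeping to be the hard part. The relation may involve only some of the embeddings, and the coefficients $a_{i}$ need not already be Galois-compatible. So an induction on the number of coordinates is needed, repeating the Subspace step on the smaller systems. The case where $\delta qu$ has conjugates of absolute value exactly $1$ must also be handled. In that case we would absorb those conjugates into the "small" side using $|\delta qu|>1$ and the strict inequality $0<\nrm{\delta qu}$. The strictness excludes the integer case, which is the same positivity issue already met in Remark~\ref{rem:nkr}.
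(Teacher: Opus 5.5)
Treating the statement as a cited input matches the paper. The paper gives no proof: it takes the algebraic-irrational case on \cite{CZ04}'s authority as the fourth axiom, and derives only the rational case from Theorem~\ref{thm:subspace} at $n=2$.

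The sketch you offer in its place, however, breaks at its first Diophantine step. You place $X_{0}-\delta X_{1}$ at the identity place $v_{0}$ and coordinate forms at every other place of $S$. With that choice the double product is not small, for two reasons.
\begin{itemize}
\item At each archimedean $v\neq v_{0}$ the form $X_{0}$ contributes $|p|_{v}$, a positive power of $|p|\approx|\delta qu|$. This is unbounded along the family.
\item The $d$ copies of $q$ are charged at every archimedean place. But $\nrm{\delta qu}$ is credited at $v_{0}$ alone, and only to the power $[K_{v_{0}}:\R]/[K:\Q]$ in the absolute normalization.
\end{itemize}
So $q^{-d-\varepsilon}$ ``compensates'' only when $K$ has a single archimedean place, that is, $K=\Q$ or $K$ imaginary quadratic. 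That is why your construction is right for the rational instance. It is wrong exactly where the axiom is needed, since a Galois field containing a real irrational $\delta$ has at least two archimedean places.

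The conjugate coordinates $q\sigma_{i}(u)$ exist precisely so that a small form can sit at \emph{every} archimedean place. Take $S$ Galois-stable, so that all $\sigma_{i}(u)$ are $S$-units. At the place $v$ given by $\tau_{v}\in\mathrm{Gal}(K/\Q)$, use $X_{0}-\tau_{v}^{-1}(\delta)X_{j(v)}$ together with $X_{1},\dots,X_{d}$, where $\sigma_{j(v)}=\tau_{v}^{-1}|_{\Q(u)}$. Then $\tau_{v}$ carries the first form's value to $p-\delta qu$ at every archimedean place, and the product formula removes the unit parts. The double product is at most $\nrm{\delta qu}\,|q|^{d}<H(u)^{-\varepsilon}|q|^{-\varepsilon}$. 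Since $x$ has $S$-integral coordinates, $\prod_{v\in S}\nrm{x}_{v}\ge H(x)$, and the required bound $H(x)^{-(d+1)-\varepsilon'}$ follows. This is where the exponent $d$ in the hypothesis comes from.

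The degeneracy step also has a hole. When the relation has $a_{0}\neq0$, dividing by $q$ does not give an $S$-unit equation, because $p/q$ is neither fixed nor an $S$-unit. You must substitute $p=-\sum_{i}(a_{i}/a_{0})\,q\sigma_{i}(u)$ back into $\delta qu-p$ and rerun the approximation argument on the resulting small linear form in the $\sigma_{i}(u)$. That is where the pseudo-Pisot conclusion is actually earned; your ``induction on the number of coordinates'' is a placeholder for it, not an argument.

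Note, though, that the only instance \S\ref{sub:algmult} consumes has $q=1$ and $u=(3/2)^{a}\in\Q^{*}$, so $d=1$. There the corrected construction closes at once. Infinitely many points $(p,u)$ on one line force $p=cu$ with one fixed $c\in\Q$. Hence $\nrm{\delta u}=|\delta-c|\,|u|>|\delta-c|/|\delta|>0$, because $|\delta u|>1$ and $\delta\notin\Q$. This contradicts $\nrm{\delta u}<3^{-\varepsilon a}\to0$. That instance is just Ridout's theorem for the algebraic number $\delta$, obtained from Theorem~\ref{thm:subspace} at $n=2$ over the Galois closure of $\Q(\delta)$, and it needs no pseudo-Pisot analysis at all.
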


Here a real algebraic $\alpha$ is \emph{pseudo-Pisot} if $|\alpha|>1$, every
conjugate of $\alpha$ other than $\alpha$ itself has modulus $<1$, and the
trace of $\alpha$ is a rational integer; a pseudo-Pisot number need not be an
algebraic integer. For $\delta\in\Q$ this theorem is \emph{derived} from
Theorem~\ref{thm:subspace} in the formalization. For
algebraic irrational $\delta$ it is taken on \cite{CZ04}'s authority as a
cited axiom, and it is used only in \S\ref{sub:algmult}: a \emph{lane} of its
own --- an axiom footprint disjoint from those of the other three, carried by no
result outside that subsection.

\section{Introduction and proof overview}\label{sec:overview}

\subsection*{One object, two descriptions}

Start at $1$ and repeatedly multiply by $3/2$, rounding up:
\[
1,\ 2,\ 3,\ 5,\ 8,\ 12,\ 18,\ 27,\ 41,\ 62,\ 93,\ 140,\ 210,\ 315,\ 473,\ \dots
\]
Multiplying by $3/2$ lands on an integer exactly when the number is even; when
it is odd the ceiling adds a half. Record a $1$ where it does and a $0$ where it
does not, and the record is the word
\[
w\ =\ 1\,0\,1\,1\,0\,0\,0\,1\,1\,0\,1\,0\,0\,1\,1\,0\cdots
\]
--- which is simply the parity of the orbit, since the rounding bites exactly at
the odd terms.

The word is not a lossy summary of the orbit; it is the orbit. The terms grow
like $(3/2)^{n}$ at a definite rate
\[
\Kc\ =\ 1.6222705\ldots,
\]
and the whole orbit can be read back off that single real number:
$x_{n}=\lfloor\Kc(3/2)^{n}\rfloor$ (Theorem~\ref{thm:cf}). So $w$ and $\Kc$ are
two descriptions of one object --- $w$ is the digit string of $\Kc$ in the
rational base $3/2$, and $\Kc$ is the number that string names. Two questions
about them have been open for decades: is $\Kc$ irrational (asked in 1977), and
is $w$ automatic, that is, produced by a finite automaton reading the digits of
$n$? Neither is answered here. What is proved is that they cannot both go the
easy way: \emph{if $w$ is automatic then $\Kc$ is transcendental}
(Theorem~\ref{thm:main}).

\subsection*{The word is an odometer reading}

Everything rests on one dictionary. The $m$ letters of $w$ read from position
$n$ depend on nothing but $x_{n}\bmod2^{m}$, and they determine it
(Theorem~\ref{thm:rigid}). The word is the low-order end of a counter: reading a
block of $m$ letters is reading $m$ bits of $x_{n}$, and two positions show the
same block exactly when the orbit values agree modulo $2^{m}$.

For instance $x_{1}=2$ and $x_{6}=18$ differ by $16=2^{4}$, and the four letters
from position $1$ and from position $6$ do coincide: both read $0110$. Counting
blocks is therefore counting residues, and the complexity $\cx(w,m)$ --- the
number of distinct length-$m$ blocks --- \emph{equals} the number of residues
modulo $2^{m}$ that the orbit ever occupies
(Proposition~\ref{prop:residues}).

\subsection*{Repetitions are paid for in powers of two}

That example also shows what limits the word. A repeated block of length $k$ at
positions $a<c$ forces $2^{k}\mid x_{c}-x_{a}$; but by step $c$ the orbit has
only reached about $(3/2)^{c}$, so the divisibility cannot run deep:
\[
k\ \le\ \tfrac{24}{41}\,c+1\ \approx\ 0.585\,c
\qquad\text{(Corollary~\ref{cor:slope}).}
\]
A repetition is a coincidence that must be paid for in powers of $2$, and by
time $c$ the orbit has earned only $c\log_{2}(3/2)$ of them. This one inequality
does three jobs of increasing depth. It makes $w$ aperiodic, since a period
would supply repetitions of \emph{every} length at one fixed $c$
(Theorem~\ref{thm:aper2}). It makes the known linear lower bound
$\cx(w,n)>1.7095\,n$ a pigeonhole: the first $1.7095\,m$ orbit points are simply
too small to collide modulo $2^{m}$ (Corollary~\ref{cor:dubfloor}). And it
closes off a whole family of attacks --- periodize the digits at a repetition
and feed the resulting approximants to the $p$-adic Subspace Theorem, as
\cite{AB07} did for Loxton--van der Poorten --- because that route needs
repetitions of relative length above $1.7095$ where the ceiling allows $0.585$
(\S\ref{sub:ledger}). The two numbers are reciprocals straddling $1$, so no
sharpening closes the gap.

\subsection*{Two engines for two halves}

Suppose $\Kc$ is algebraic and $w$ is automatic, and derive a contradiction.
$\Kc$ is either irrational or rational, and the halves are closed by arguments
with nothing in common.

\emph{The irrational half} (\S\ref{sub:af}) is Mahler's method. An automatic
sequence has a generating series satisfying $f(z)=A(z)f(z^{q})$ with $A$ a
matrix of polynomials, and for such a series Adamczewski and Faverjon prove an
alternative: at a nonzero rational point of the unit disc its value is
transcendental or rational (Theorem~\ref{thm:af}). Here that value is
$3(\Kc-x_{0})$, so the alternative transfers to $\Kc$ verbatim, and the two
exits are shut by the two hypotheses --- ``transcendental'' contradicts the
algebraicity, ``rational'' contradicts the irrationality. It is a trap with two
doors, and each assumption closes one.

\emph{The rational half} (\S\ref{sub:kernel}) is Diophantine approximation. If
$\Kc=\delta$ is rational, a repeated block of length $k$ at $(a,c)$ pushes the
number $\delta((3/2)^{c}-(3/2)^{a})$ to within $(2/3)^{k}$ of an integer
(Proposition~\ref{prop:violator}): a repetition in the word \emph{is} a rational
number caught unusually close to an integer. Such coincidences are finite in
number --- that is the kernel, Theorem~\ref{thm:kernel}, and it is where the
Subspace Theorem is spent. A linear bound $\cx(w,m)\le Cm$, on the other hand,
would manufacture one at every scale by pigeonhole. So no linear bound holds
(Theorem~\ref{thm:t1arat}), and Cobham's theorem forbids that of an automatic
word.

The simplest case of the kernel is $\delta=1$: how close can
$(3/2)^{c}-(3/2)^{a}$ come to an integer? That case is \cite{RS26}. Allowing an
arbitrary rational multiplier is not free, and the step that breaks outright is
the most innocent-looking one: for $\delta\ne1$ the value can be an integer
exactly --- at $\delta=4$, $a=1$, $c=2$ it is $3$ --- and about an integer no
approximation theorem has anything to say. The damage is confined to a finite
box and discarded (Lemma~\ref{lem:degen}).

Neither engine can do the other's work. The Mahler alternative sees the word
only through the single real number it defines, so it yields nothing
quantitative about complexity; the Diophantine kernel sees only how well that
number is approximated, and needs it rational to begin with.
\S\ref{sub:algmult} narrows the gap from the second side, weakening
``rational'' towards ``algebraic'' as far as the printed literature allows.

\subsection*{What is assumed, and where}

Four statements are used without proof (\S\ref{sub:axioms}), and they do not
mix. Two lemmas from the Adamczewski--Faverjon circle power the irrational half
and only it; the Subspace Theorem powers the rational half and only it; the
Corvaja--Zannier theorem at algebraic $\delta$ is spent entirely inside
\S\ref{sub:algmult}. Everything else --- all of \S\ref{sec:basic},
\S\ref{sec:rig} and \S\ref{sub:ledger} included --- is proved outright and
machine-checked. Figure~\ref{fig:dep} shows the shape of the dependency,
Table~\ref{tab:dep} the exact footprint of each statement.

\begin{figure}[htb]
\centering
\fitwidth{%
\begin{tikzpicture}[>={Stealth[length=2mm]}, line width=0.5pt,
  ax/.style={draw,dashed,rounded corners,align=center,font=\small,
             text width=2.1cm,inner sep=3pt,minimum height=0.95cm},
  res/.style={draw,align=center,font=\small,
             text width=2.1cm,inner sep=3pt,minimum height=0.95cm},
  core/.style={draw,densely dotted,align=center,font=\small,
             text width=2.1cm,inner sep=3pt,minimum height=0.95cm},
  key/.style={draw,line width=1.1pt,align=center,font=\small,
             text width=2.4cm,inner sep=3pt,minimum height=0.95cm},
  ar/.style={->,shorten >=1pt,shorten <=1pt}]

\node[ax] (asub)  at (0,0)   {Subspace\\ Thm~\ref{thm:subspace}};
\node[ax] (acz)   at (3,0)   {Corvaja--Zannier\\ Thm~\ref{thm:czalg}};
\node[ax] (aaf17) at (6,0)   {[AF17] Lemme 2.2\\ Thm~\ref{thm:af17reg}};
\node[ax] (aaf22) at (9,0)   {[AF22] Lemma 2.8\\ Thm~\ref{thm:af22branch}};

\node[core] (core) at (-3,-2)   {\textbf{$2$-adic core}\\ Thms~\ref{thm:rigid},
                                 \ref{thm:ceiling}};
\node[res]  (ker)  at (0,-2)    {kernel\\ Thm~\ref{thm:kernel}};
\node[res]  (sl)   at (3,-2)    {bounded-gap kernel\\ Thm~\ref{thm:algslice}};
\node[res]  (ta)   at (7.5,-2)  {\textbf{Theorem A}\\ Mahler alternative};

\node[core] (free) at (-3,-4)   {aperiodicity, floor, ledger no-go};
\node[res]  (rat)  at (0,-4)    {rational half\\ Thm~\ref{thm:t1arat}};
\node[res]  (alg)  at (3,-4)    {\S\ref{sub:algmult}: Cor~\ref{cor:closerep},
                                 Thm~\ref{thm:algcond}};
\node[res]  (irr)  at (7.5,-4)  {irrational half\\ Thm~\ref{thm:t1a}};

\node[key] (C) at (-1,-6.2) {\textbf{Theorem C}\\ superlinear or $\Kc$
                             irrational};
\node[key] (B) at (4.2,-6.2) {\textbf{Theorem B}\\ $w$ automatic
                              $\Rightarrow$ $\Kc$ transcendental};

\draw[ar] (asub)  -- (ker);
\draw[ar] (acz)   -- (sl);
\draw[ar] (aaf17) -- (ta);
\draw[ar] (aaf22) -- (ta);
\draw[ar] (core)  -- (free);
\draw[ar] (core)  -- (rat);
\draw[ar] (ker)   -- (rat);
\draw[ar] (ker)   -- (alg);
\draw[ar] (sl)    -- (alg);
\draw[ar] (ta)    -- (irr);
\draw[ar] (rat)   -- (C);
\draw[ar] (rat)   -- (B);
\draw[ar] (irr)   -- (B);
\end{tikzpicture}}
\caption{Logical dependencies. Dashed: the four cited inputs of
\S\ref{sub:axioms}. Dotted: the $2$-adic core of \S\ref{sec:rig}, which uses
none of them and carries aperiodicity (Theorem~\ref{thm:aper2}), the complexity
floor (Corollary~\ref{cor:dubfloor}) and the no-go of \S\ref{sub:ledger}
(Theorem~\ref{thm:noindex}) on its own. An arrow reads ``is used in the proof
of''. The two lanes never meet: no result carries both an \textnormal{[AF]}
lemma and the Corvaja--Zannier axiom.}
\label{fig:dep}
\end{figure}

\begin{table}[htb]
\centering
\small
\renewcommand{\arraystretch}{1.15}
\begin{tabular}{@{}llcccc@{}}
\hline
Statement & \S & [AF17] & [AF22] & [Sub] & [CZ04a]\\
\hline
Thm~\ref{thm:rigid} (rigidity) & \ref{sec:rig} & & & & \\
Prop~\ref{prop:residues} (complexity $=$ residues) & \ref{sec:rig} & & & & \\
Cor~\ref{cor:dubfloor} (complexity floor) & \ref{sec:rig} & & & & \\
Thm~\ref{thm:ceiling}, Cor~\ref{cor:slope} (ceiling) & \ref{sec:rig} & & & & \\
Thm~\ref{thm:aper2} (aperiodicity) & \ref{sec:rig} & & & & \\
Thm~\ref{thm:cf} (closed form) & \ref{sec:rig} & & & & \\
Thm~\ref{thm:af} (\textbf{Theorem A}) & \ref{sub:new} & \checkmark & \checkmark
  & & \\
Thm~\ref{thm:t1a}, Cor~\ref{cor:t1acontra} (irrational half) & \ref{sub:af}
  & \checkmark & \checkmark & & \\
Prop~\ref{prop:violator}, Cor~\ref{cor:contract} (contraction)
  & \ref{sub:kernel} & & & & \\
Thm~\ref{thm:kernel} (kernel) & \ref{sub:kernel} & & & \checkmark & \\
Cor~\ref{cor:effslice} (effective slice) & \ref{sub:kernel} & & & & \\
Thm~\ref{thm:t1arat} (rational half) & \ref{sub:kernel} & & & \checkmark & \\
Thm~\ref{thm:dich} (\textbf{Theorem C}) & \ref{sub:kernel} & & & \checkmark & \\
Thm~\ref{thm:main} (\textbf{Theorem B}) & \ref{sub:compose} & \checkmark
  & \checkmark & \checkmark & \\
Thm~\ref{thm:algslice} (bounded-gap kernel) & \ref{sub:algmult} & & & &
  \checkmark\\
Thm~\ref{thm:algcond}, Cor~\ref{cor:closerep} & \ref{sub:algmult} & & &
  \checkmark & \checkmark\\
Thm~\ref{thm:noindex} (ledger no-go) & \ref{sub:ledger} & & & & \\
Prop~\ref{prop:padic} ($2$-adic value) & \ref{sub:padic} & & & & \\
\hline
\end{tabular}
\caption{Which cited input each statement carries; a blank row is proved from
nothing but classical logic. \textnormal{[AF17]} and \textnormal{[AF22]} are
Theorems~\ref{thm:af17reg} and~\ref{thm:af22branch}, \textnormal{[Sub]} is
Theorem~\ref{thm:subspace}, \textnormal{[CZ04a]} is Theorem~\ref{thm:czalg} at
algebraic $\delta$. Appendix~\ref{app:lean} gives the same information per Lean
declaration, as confirmed by \texttt{\#print axioms}.}
\label{tab:dep}
\end{table}

\subsection*{The route through the paper}

\S\ref{sec:basic} fixes the orbit, the word and the constant. \S\ref{sec:rig}
proves the dictionary and the ceiling, and with them aperiodicity and the closed
form. \S\ref{sec:main} runs the two engines and assembles
Theorem~\ref{thm:main}, then pushes the Diophantine one towards algebraic
multipliers. \S\ref{sec:comp} collects two complements --- the $2$-adic value of
the word, and the ledger that closes the Subspace route --- and reports the
computations. Appendix~\ref{app:af} records what formalizing the Mahler input
turned up about the printed proofs, and Appendix~\ref{app:lean} indexes every
statement by its Lean name.

\section{Basics}\label{sec:basic}

\begin{definition}\label{def:orbit}
Given $x_{0}\in\N$, set $x_{n+1}:=\lceil 3x_{n}/2\rceil$ for $n\ge0$, and let
$w_{n}:=x_{n}\bmod 2$.
\end{definition}

\begin{proposition}\label{prop:sel}
For every $n$, $\;2x_{n+1}=3x_{n}+w_{n}$, and $w_{n}\in\{0,1\}$. If $x_{0}\ge1$
then $x_{0}<x_{1}<x_{2}<\cdots$.
\end{proposition}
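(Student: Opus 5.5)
The plan is to prove all three assertions directly from Definition~\ref{def:orbit} by a parity case split on $x_{n}$, and then obtain strict monotonicity by induction on $n$.

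First, fix $n$ and write $x_{n}=2t+r$ with $r=x_{n}\bmod 2=w_{n}\in\{0,1\}$; in particular $w_{n}\in\{0,1\}$ is immediate from the definition of $\bmod$.
\begin{itemize}
\item If $r=0$, then $3x_{n}/2=3t$ is an integer, so $x_{n+1}=3t$ and $2x_{n+1}=6t=3x_{n}+0$.
\item If $r=1$, then $3x_{n}/2=3t+3/2$, whose ceiling is $3t+2$. Hence $2x_{n+1}=6t+4=3(2t+1)+1=3x_{n}+w_{n}$.
\end{itemize}
Equivalently, $\lceil 3x/2\rceil=(3x+(x\bmod 2))/2$, because $3x+(x\bmod 2)$ is even and exceeds $3x$ by less than $2$. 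In a formal setting I would state this single identity once and derive the recurrence $2x_{n+1}=3x_{n}+w_{n}$ from it.

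For monotonicity, I first show by induction that $x_{n}\ge 1$ for all $n$, starting from $x_{0}\ge1$. Given $x_{n}\ge1$, the identity gives
\[
2x_{n+1}=3x_{n}+w_{n}\ge 3x_{n}>2x_{n}.
\]
Dividing by $2$ yields $x_{n+1}>x_{n}\ge1$, which both propagates the positivity hypothesis and gives the strict inequality.

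No step is a genuine obstacle. The only point needing care is the ceiling computation in the odd case, which is routine integer arithmetic. The hypothesis $x_{0}\ge1$ is used only to rule out the fixed point $x_{n}=0$, where the orbit would be constant.
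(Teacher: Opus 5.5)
Your proof is correct and is essentially the paper's own argument. The paper gives no separate proof beyond the remark that $w_{n}$ is the correction making $3x_{n}$ even, which is exactly your identity $\lceil 3x/2\rceil=(3x+(x\bmod 2))/2$. Monotonicity then follows, as you show, from $2x_{n+1}\ge3x_{n}>2x_{n}$ for positive $x_{n}$.
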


The identity $2x_{n+1}=3x_{n}+w_{n}$ is the whole engine: $w_{n}$ is the
correction that makes $3x_{n}$ even, so $w_{n}=2x_{n+1}-3x_{n}$ is Dubickas's
word and $w_{n}\equiv x_{n}\pmod 2$ simultaneously.

\begin{definition}\label{def:W}
For $a,k\in\N$ put $\;W(a,k):=\sum_{i<k}3^{\,k-1-i}2^{i}w_{a+i}$, the
\emph{circuit sum} of the length-$k$ factor of $w$ at $a$.
\end{definition}

\begin{proposition}\label{prop:circuit}
For all $a,k\in\N$, $\;2^{k}x_{a+k}=3^{k}x_{a}+W(a,k)$.
\end{proposition}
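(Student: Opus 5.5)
We argue by induction on $k$, with $a$ held fixed throughout; the only input is the selection identity of Proposition~\ref{prop:sel}, namely $2x_{n+1}=3x_{n}+w_{n}$.

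\emph{Base case.} For $k=0$ the circuit sum $W(a,0)$ is an empty sum, so it vanishes. The claimed identity then reads $x_{a}=x_{a}$.

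\emph{Inductive step.} Assume $2^{k}x_{a+k}=3^{k}x_{a}+W(a,k)$. Multiply Proposition~\ref{prop:sel} at $n=a+k$ by $2^{k}$:
\[
2^{k+1}x_{a+k+1}=3\cdot 2^{k}x_{a+k}+2^{k}w_{a+k}.
\]
Substituting the induction hypothesis gives
\[
2^{k+1}x_{a+k+1}=3^{k+1}x_{a}+3W(a,k)+2^{k}w_{a+k}.
\]
It remains to verify the recursion
\[
W(a,k+1)=3W(a,k)+2^{k}w_{a+k}.
\]
This holds because multiplying $W(a,k)$ by $3$ raises every exponent $3^{k-1-i}$ to $3^{k-i}$, and these are exactly the terms of $W(a,k+1)$ with $i<k$. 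The remaining term of $W(a,k+1)$ is the one at $i=k$, which equals $3^{0}2^{k}w_{a+k}$.

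\emph{Where care is needed.} There is no real obstacle; the only point requiring attention is the exponent bookkeeping in the recursion for $W$. Two things should be kept straight. First, the induction is on $k$ and not on $a$: inducting on $a$ would shift every index of $w$ at once, whereas inducting on $k$ touches only the last letter. Second, $W$ accumulates as a Horner scheme, multiplying by $3$ and then appending the new letter with weight $2^{k}$. Working in $\mathbb{Z}$, or entirely in $\mathbb{N}$ since every quantity involved is nonnegative, avoids any division.
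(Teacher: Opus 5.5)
Your proof is correct and follows essentially the same route as the paper: induction on $k$, feeding the selection identity $2x_{a+k+1}=3x_{a+k}+w_{a+k}$ into the inductive hypothesis and matching the result against the recurrence $W(a,k+1)=3W(a,k)+2^{k}w_{a+k}$. Your exponent bookkeeping for that recurrence, and the empty-sum base case, are exactly what the paper's sketch leaves implicit.
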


\begin{proof}[Proof sketch]
Induction on $k$, feeding $2x_{a+k+1}=3x_{a+k}+w_{a+k}$ into the inductive
hypothesis and collecting terms; the recurrence $W(a,k+1)=3W(a,k)+2^{k}w_{a+k}$
matches.
\end{proof}

\begin{proposition}\label{prop:partial}
For every $n$,
\[
x_{n}\Bigl(\tfrac23\Bigr)^{n}=x_{0}+\tfrac13\sum_{j<n}w_{j}
\Bigl(\tfrac23\Bigr)^{j}.
\]
\end{proposition}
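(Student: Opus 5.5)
The plan is to prove the identity by induction on $n$, using only the selection identity $2x_{n+1}=3x_{n}+w_{n}$ of Proposition~\ref{prop:sel}. An equivalent route is to read it off Proposition~\ref{prop:circuit} at $a=0$, $k=n$ and then divide by $3^{n}$; both give the same computation, and the induction is the more transparent of the two.

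\emph{Base case.} At $n=0$ both sides equal $x_{0}$, since the sum on the right is empty.

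\emph{Inductive step.} Assume the identity at $n$. Multiply the selection identity by $\tfrac12(2/3)^{n+1}$ to get
\[
x_{n+1}\Bigl(\tfrac23\Bigr)^{n+1}
=\tfrac32x_{n}\Bigl(\tfrac23\Bigr)^{n+1}+\tfrac12w_{n}\Bigl(\tfrac23\Bigr)^{n+1}
=x_{n}\Bigl(\tfrac23\Bigr)^{n}+\tfrac13w_{n}\Bigl(\tfrac23\Bigr)^{n}.
\]
Here we have used $\tfrac32\cdot(2/3)^{n+1}=(2/3)^{n}$ and $\tfrac12\cdot(2/3)^{n+1}=\tfrac13(2/3)^{n}$. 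Substituting the inductive hypothesis for $x_{n}(2/3)^{n}$ turns the right-hand side into $x_{0}+\tfrac13\sum_{j<n+1}w_{j}(2/3)^{j}$, which closes the induction.

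Nothing in this argument is an obstacle; the only point needing care is the scalar bookkeeping with the powers of $2/3$. It is worth working over $\Q$ (or $\R$) from the start, so that no division issue arises. In the formalization one might equally clear denominators and prove $3\cdot 2^{n}x_{n}=3^{n+1}x_{0}+\sum_{j<n}w_{j}2^{j}3^{n-j}$ over $\Z$ by the same induction.
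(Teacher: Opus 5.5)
Your proof is correct and essentially the paper's argument. The paper's sketch gets the identity by dividing Proposition~\ref{prop:circuit} at $a=0$ by $3^{n}$, and notes that the formalization instead uses the direct induction on $2x_{n+1}=3x_{n}+w_{n}$ that you carry out; your scalar bookkeeping checks out.
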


\begin{proof}[Proof sketch]
This is Proposition~\ref{prop:circuit} at $a=0$ divided by $3^{n}$; the
formalization proves it by direct induction instead, which keeps the natural
subtraction implicit in $W$ out of $\R$.
\end{proof}

Since $w_{j}\in\{0,1\}$, the series $\sum_{j}w_{j}(2/3)^{j}$ is dominated by a
geometric series and converges; all sums below are finite for this reason.

\begin{definition}\label{def:K}
$\displaystyle \Kc:=x_{0}+\tfrac13\sum_{j\ge0}w_{j}(2/3)^{j}$.
\end{definition}

\begin{proposition}\label{prop:Klim}
$\Kc=\lim_{n\to\infty}x_{n}(2/3)^{n}$, and
\[
\sum_{j\ge0}w_{j}\Bigl(\tfrac23\Bigr)^{j}=3(\Kc-x_{0}).
\]
\end{proposition}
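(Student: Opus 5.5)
The plan is to take the limit in Proposition~\ref{prop:partial}, whose right-hand side is a partial sum of the series defining $\Kc$ in Definition~\ref{def:K}.

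First, convergence. Since $w_{j}\in\{0,1\}$, every term satisfies $0\le w_{j}(2/3)^{j}\le(2/3)^{j}$, so the series $S:=\sum_{j\ge0}w_{j}(2/3)^{j}$ converges absolutely by comparison with the geometric series, which sums to $3$. Hence the partial sums $S_{n}:=\sum_{j<n}w_{j}(2/3)^{j}$ converge to $S$. Definition~\ref{def:K} reads $\Kc=x_{0}+\tfrac13 S$, and rearranging gives the displayed identity $S=3(\Kc-x_{0})$ directly.

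Second, the limit. Proposition~\ref{prop:partial} states that $x_{n}(2/3)^{n}=x_{0}+\tfrac13 S_{n}$ for every $n$. The right-hand side tends to $x_{0}+\tfrac13 S=\Kc$ by continuity of affine maps. Therefore the left-hand side converges to $\Kc$ as well.

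There is no real obstacle. The only point needing care, especially in the formal setting, is to treat everything in $\R$: cast $x_{n}$ and $w_{j}$ to reals, and use summability of the nonnegative dominated series so that $\sum_{j\ge0}$ denotes a genuine sum rather than a junk value. One then identifies the limit of the partial sums over $j<n$ with that sum, which is the standard fact that the partial sums of a summable series tend to its sum.
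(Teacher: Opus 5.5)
Your proposal is correct and follows the paper's route: the identity $\sum_{j}w_{j}(2/3)^{j}=3(\Kc-x_{0})$ is immediate from Definition~\ref{def:K}, and the limit follows by letting $n\to\infty$ in Proposition~\ref{prop:partial}, since the partial sums of the series converge (it is dominated by a geometric series). Your remark about working in $\R$ with a genuinely summable series matches the paper's own comment on why it defines $\Kc$ by the series rather than by the limit.
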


Defining $\Kc$ by the series rather than by the limit makes the second identity
free and the limit description a theorem; the second identity is the interface
through which \S\ref{sub:af} feeds the word to Theorem~\ref{thm:af}, and is the
reason for the choice.

\begin{proposition}\label{prop:window}
For every $n$, $\;x_{n}\le \Kc\,(3/2)^{n}\le x_{n}+1$.
\end{proposition}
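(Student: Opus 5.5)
We prove Proposition~\ref{prop:window} by computing the gap $\Kc\,(3/2)^{n}-x_{n}$ exactly as a tail of the defining series, and then bounding that tail above by a geometric series.

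First, multiply Proposition~\ref{prop:partial} by $(3/2)^{n}$ and subtract it from Definition~\ref{def:K} multiplied by $(3/2)^{n}$. The finite sum cancels against the first $n$ terms of the full series, which gives
\[
\Kc\,(3/2)^{n}-x_{n}\;=\;\tfrac13\,(3/2)^{n}\sum_{j\ge n}w_{j}(2/3)^{j}\;=\;\tfrac13\sum_{i\ge0}w_{n+i}(2/3)^{i}.
\]
Only convergence of the full series is needed here, and that was noted after Proposition~\ref{prop:partial}. Equivalently, one can apply the same identity with $x_{n}$ as the new starting point: the orbit from $x_{n}$ is the shifted orbit, its word is the shifted word, and its constant is $\Kc(3/2)^{n}$. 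We will use the direct subtraction, since it avoids re-deriving the shifted constant.

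The lower bound is then immediate, because every $w_{n+i}\ge 0$.

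For the upper bound, use $w_{n+i}\le 1$ to get
\[
\tfrac13\sum_{i\ge0}(2/3)^{i}=\tfrac13\cdot 3=1 .
\]
This already gives the non-strict inequality $\Kc(3/2)^{n}\le x_{n}+1$ stated in the proposition, and nothing sharper is required.

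No step is a genuine obstacle. The only care needed is in the bookkeeping of the tail: reindexing $j=n+i$ and pulling out the factor $(2/3)^{n}$ so that it cancels against $(3/2)^{n}$. In a formal setting one would also need to justify splitting a convergent series into a finite head plus a tail, which is a standard summability lemma.
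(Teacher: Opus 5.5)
Your proof is correct and follows the paper's own argument: split the series of Definition~\ref{def:K} at $n$, identify the head as $x_{n}(2/3)^{n}$ via Proposition~\ref{prop:partial}, and bound the tail below by $0$ and above by the geometric series. The only difference is ordering: you multiply by $(3/2)^{n}$ and reindex to $\frac13\sum_{i\ge0}w_{n+i}(2/3)^{i}$ first, whereas the paper bounds the tail by $(2/3)^{n}$ and multiplies by $(3/2)^{n}$ at the end.
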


\begin{proof}[Proof sketch]
Split the series of Definition~\ref{def:K} at $n$. The head is
$x_{n}(2/3)^{n}$ by Proposition~\ref{prop:partial}; the tail is non-negative,
giving the left inequality, and is bounded by $\sum_{i\ge0}(2/3)^{n+i}
=3(2/3)^{n}$, giving the right one after multiplying by $(3/2)^{n}$.
\end{proof}

Proposition~\ref{prop:window} pins the orbit to a half-open unit window below
$\Kc(3/2)^{n}$; sharpening its right-hand inequality to a strict one leads immediaely
to Theorem~\ref{thm:cf}, and costs the aperiodicity of $w$. That is the
subject of the next section.

\section{Two-adic rigidity and aperiodicity of \texorpdfstring{$w$}{w}}\label{sec:rig}

\begin{definition}\label{def:rep}
A \emph{repeated factor} of length $k$ at $(a,c)$ is an equality of the length-$k$
factors of $w$ at $a$ and at $c$: $w_{a+i}=w_{c+i}$ for all $i<k$. Occurrences
may overlap.
\end{definition}

\begin{theorem}\label{thm:rigid}
For all $a,b,m$, the length-$m$ factors of $w$ at $a$ and $b$ agree if and only
if $2^{m}\mid x_{b}-x_{a}$.
\end{theorem}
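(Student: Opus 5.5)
The plan is to prove the statement by induction on $m$, using the circuit identity of Proposition~\ref{prop:circuit} in its one-step form $2x_{n+1}=3x_{n}+w_{n}$ (Proposition~\ref{prop:sel}). The case $m=0$ is trivial, since both sides hold vacuously: empty factors agree, and $2^{0}$ divides everything. For the inductive step we split the length-$(m+1)$ factor at $a$ into the letter $w_{a}$ followed by the length-$m$ factor at $a+1$, and do the same at $b$. The claim to establish is
\[
\bigl(w_{a}=w_{b}\ \text{and}\ 2^{m}\mid x_{b+1}-x_{a+1}\bigr)\iff 2^{m+1}\mid x_{b}-x_{a}.
\]
Once this is in hand, the inductive hypothesis applied at $(a+1,b+1)$ finishes the step.

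For the forward direction, assume $w_{a}=w_{b}$ and $2^{m}\mid x_{b+1}-x_{a+1}$. Subtracting the two one-step identities gives
\[
2(x_{b+1}-x_{a+1})=3(x_{b}-x_{a})+(w_{b}-w_{a})=3(x_{b}-x_{a}).
\]
The left-hand side is divisible by $2^{m+1}$. Since $3$ is odd, $2^{m+1}\mid x_{b}-x_{a}$ follows.

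For the reverse direction, assume $2^{m+1}\mid x_{b}-x_{a}$. Then $x_{a}\equiv x_{b}\pmod 2$ as soon as $m+1\ge1$, so $w_{a}=w_{b}$ because $w_{n}=x_{n}\bmod 2$. The same displayed identity then gives $2(x_{b+1}-x_{a+1})=3(x_{b}-x_{a})$, which is divisible by $2^{m+1}$. Halving yields $2^{m}\mid x_{b+1}-x_{a+1}$.

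Nothing here is a serious obstacle. The only care needed is to work in $\Z$ rather than $\N$, so that the differences $x_{b}-x_{a}$ are signed, and to formulate the induction uniformly in $a,b$, so that the hypothesis can be applied at the shifted pair $(a+1,b+1)$. As an alternative, one can argue directly: by Proposition~\ref{prop:circuit}, $2^{m}(x_{b+m}-x_{a+m})=3^{m}(x_{b}-x_{a})+W(b,m)-W(a,m)$. One then shows that $W(\cdot,m)$ modulo $2^{m}$ determines the factor, since $3$ is invertible modulo $2^{m}$ and the digits $2^{i}w_{a+i}$ are read off successively. This route has to handle the reverse implication through the same digit-by-digit peeling, so the induction is the cleaner choice.
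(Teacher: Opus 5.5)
Your proof is correct and is essentially the paper's: your reverse direction is exactly its $2$-adic descent through $2(x_{b+1}-x_{a+1})=3(x_{b}-x_{a})$. Your inductive forward step is the one-letter-at-a-time version of the paper's forward argument, which instead subtracts the two instances of Proposition~\ref{prop:circuit} to get $3^{m}(x_{b}-x_{a})=2^{m}(x_{b+m}-x_{a+m})$ in one shot, so running both directions as a single induction uniform in $(a,b)$ is only a repackaging.
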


\begin{proof}[Proof]
$(\Rightarrow)$ Equal factors give equal circuit sums, $W(a,m)=W(b,m)$;
subtracting the two instances of Proposition~\ref{prop:circuit} leaves
\[
    3^{m}(x_{b}-x_{a})=2^{m}(x_{b+m}-x_{a+m})\text{, so }\ 2^{m}\mid 3^{m}(x_{b}-x_{a}),
\]
and $2\nmid3$.

$(\Leftarrow)$ is a $2$-adic descent: from $2^{m}\mid x_{a}-x_{b}$,
one power of $2$ survives at each step, so $w_{a+i}=w_{b+i}$ (the letters are
the residues mod $2$), and then $2(x_{a+i+1}-x_{b+i+1})=3(x_{a+i}-x_{b+i})$
divides the agreement by one power of $2$. The agreement therefore
decays to $2^{m-i}\mid x_{a+i}-x_{b+i}$ and stays alive $m$ steps ---
the length of the factor.
\end{proof}

So the length-$m$ factors of $w$ \emph{are} the residues $x_{n}\bmod 2^{m}$; in
particular, counting factors is counting residues.

\begin{proposition}\label{prop:residues}
For every $x_{0}\ge1$ and every $m$,
\[
\cx(w,m)\;=\;\#\bigl\{\,x_{n}\bmod 2^{m}\ :\ n\ge0\,\bigr\}.
\]
\end{proposition}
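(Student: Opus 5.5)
The plan is to build an explicit bijection between the set of length-$m$ factors of $w$ and the set $R_{m}:=\{x_{n}\bmod 2^{m}:n\ge0\}$, using Theorem~\ref{thm:rigid} as the only input. Define $\Phi$ on factors by sending the length-$m$ factor occurring at position $n$ to $x_{n}\bmod 2^{m}$. There is one point to check before anything else: a given factor may occur at several positions, so $\Phi$ has to be shown independent of the position chosen.

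Well-definedness is the ($\Rightarrow$) direction of Theorem~\ref{thm:rigid}. If the factors at $a$ and $b$ coincide, then $2^{m}\mid x_{b}-x_{a}$, so $x_{a}$ and $x_{b}$ have the same residue modulo $2^{m}$.

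Injectivity is the ($\Leftarrow$) direction of the same theorem. If $x_{a}\equiv x_{b}\pmod{2^{m}}$, then the length-$m$ factors at $a$ and $b$ agree, so distinct factors have distinct images.

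Surjectivity holds by construction. Every element of $R_{m}$ has the form $x_{n}\bmod 2^{m}$ for some $n$, and it is the image of the factor at $n$.

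The two sets are then in bijection, and both are finite: there are at most $2^{m}$ words of length $m$ over $\{0,1\}$, and at most $2^{m}$ residues modulo $2^{m}$. Hence $\cx(w,m)=\#R_{m}$. The case $m=0$ is consistent, since both sides equal $1$: there is one empty factor, and every $x_{n}$ reduces to $0$ modulo $1$.

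A cleaner way to say the same thing is that both counts are the number of classes of a single equivalence relation on $\N$, namely $a\sim b \iff 2^{m}\mid x_{b}-x_{a}$. The factor count is the number of classes of the relation ``equal length-$m$ factor'', the residue count is the number of classes of $\sim$, and Theorem~\ref{thm:rigid} says the two relations are the same. In a formal setting I would phrase the proof as cardinalities of images of $n\mapsto$ factor and $n\mapsto x_{n}\bmod 2^{m}$ under a common quotient.

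There is no real obstacle here, since all the content is in Theorem~\ref{thm:rigid}. The only care needed is the bookkeeping of well-definedness: counting distinct factors rather than positions. The hypothesis $x_{0}\ge1$ is not actually needed.
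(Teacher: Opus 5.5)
Your proof is correct and is essentially the paper's argument, which is likewise immediate from Theorem~\ref{thm:rigid}: two positions carry the same length-$m$ factor exactly when $x_{a}\equiv x_{b}\pmod{2^{m}}$, so factors and occupied residues are in bijection. Your side remark is also right: neither direction of Theorem~\ref{thm:rigid} uses $x_{0}\ge1$, so that hypothesis is not needed here.
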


\begin{proof}[Proof sketch]
Immediate from Theorem~\ref{thm:rigid}: two positions carry the same length-$m$
factor exactly when they carry the same residue, so factors and occupied
residues are in bijection.
\end{proof}

The restatement moves the complexity of $w$ into the $2$-adic dynamics of
$U_{3/2}$, and it has consequences the combinatorial formulation hides. The
first is that the linear lower bound of Theorem~\ref{thm:dublin} is a
pigeonhole.

\begin{corollary}\label{cor:dubfloor}
Let $x_{0}=1$. For every $m\ge1$,
\[
\cx(w,m)\;\ge\;\Bigl\lfloor\tfrac{41(m-1)}{24}\Bigr\rfloor+1 .
\]
\end{corollary}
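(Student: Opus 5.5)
By Proposition~\ref{prop:residues}, $\cx(w,m)$ is the number of residues modulo $2^{m}$ occupied by the orbit. So it suffices to show that the first $N:=\lfloor 41(m-1)/24\rfloor+1$ orbit points $x_{0},\dots,x_{N-1}$ are pairwise incongruent modulo $2^{m}$. I will argue by contradiction.

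Suppose $0\le a<c\le N-1$ and $2^{m}\mid x_{c}-x_{a}$. By Proposition~\ref{prop:sel} the orbit is strictly increasing, so $x_{c}-x_{a}$ is a positive multiple of $2^{m}$. Hence
\[
2^{m}\le x_{c}-x_{a}<x_{c}.
\]
(Theorem~\ref{thm:rigid} shows this is the same as a repeated factor of length $m$ at $(a,c)$, but the divisibility form is all that is needed.) The task is therefore to show $x_{c}<2^{m}$ whenever $c\le\lfloor 41(m-1)/24\rfloor$, that is, whenever $24c/41\le m-1$.

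For the growth bound I will use Proposition~\ref{prop:window}, which gives $x_{c}\le\Kc(3/2)^{c}$. I then need two estimates.
\begin{itemize}
\item \emph{$\Kc<2$ when $x_{0}=1$.} From Definition~\ref{def:K}, $\Kc=1+\tfrac13\sum_{j}w_{j}(2/3)^{j}\le 1+\tfrac13\cdot 3=2$. The inequality is strict because not every letter is $1$; for instance $w_{1}=x_{1}\bmod 2=0$ since $x_{1}=2$.
\item \emph{$(3/2)^{c}\le 2^{24c/41}$.} This is equivalent to $(3/2)^{41}\le 2^{24}$, i.e.\ $3^{41}\le 2^{65}$. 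That is a finite integer check: $41\log_{2}3\approx 64.98<65$, and in Lean it is closed by \texttt{norm\_num} or \texttt{decide}.
\end{itemize}
Combining them,
\[
x_{c}<2\cdot 2^{24c/41}\le 2\cdot 2^{m-1}=2^{m},
\]
which contradicts $2^{m}<x_{c}$. So the $N$ residues are distinct and $\cx(w,m)\ge N$.

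The only delicate step is the constant: $24/41$ has to sit just above $\log_{2}(3/2)\approx 0.58496$. The margin is thin, so the comparison must be done exactly, via the integer inequality $3^{41}<2^{65}$, and not by floating-point estimates.

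The rest is floor bookkeeping. One has to check that $c\le\lfloor 41(m-1)/24\rfloor$ really gives $24c\le 41(m-1)$, hence $24c/41\le m-1$. For the real-exponent comparison, a clean route is to raise everything to the $41$st power. Then $x_{c}^{41}<2^{41}(3/2)^{41c}\le 2^{41}\cdot 2^{24c}\le 2^{41m}$, which stays in integers and avoids real logarithms.
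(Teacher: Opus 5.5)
Your proof is correct and is essentially the paper's argument: the initial orbit points below $2^{m}$ are strictly increasing and so occupy distinct residues, Proposition~\ref{prop:residues} turns residues into factors, and the certificate $3^{41}\le2^{65}$ turns $x_{c}<2(3/2)^{c}$ into the integer slope $41/24$. The only difference is where $x_{c}<2(3/2)^{c}$ comes from. You use Proposition~\ref{prop:window} together with $\Kc<2$ (via $w_{1}=0$), whereas the paper uses the integer growth bound $2^{n}(x_{n}+1)\le3^{n}(x_{0}+1)$ from the proof of Theorem~\ref{thm:ceiling}, which stays out of $\R$ and gives the strict inequality without needing a zero letter.
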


\begin{proof}[Proof sketch]
The orbit values below $2^{m}$ are strictly increasing, hence pairwise distinct,
hence --- being below the modulus --- pairwise distinct residues modulo
$2^{m}$, and Proposition~\ref{prop:residues} counts each as a separate factor.
By the growth bound inside Theorem~\ref{thm:ceiling},
$x_{j}<(x_{0}+1)(3/2)^{j}$, the orbit stays below $2^{m}$ for more than
$m\log2/\log(3/2)-O(1)$ steps; the certificate $3^{41}\le2^{65}$ turns the count
into the stated integer form, of slope $41/24=1.7083\ldots$
\end{proof}

\begin{remark}\label{rem:dubfloor}
Run with real logarithms instead of the integer certificate, the same count
gives $\cx(w,m)\ge m\log2/\log(3/2)-O_{x_{0}}(1)$ with
$\log2/\log(3/2)=1.70951\ldots$, which is Theorem~\ref{thm:dublin} up to its
additive constant --- and, read through Proposition~\ref{prop:residues},
explains it: the first $1.7095\,m$ orbit points are simply too small to collide
modulo $2^{m}$. Improving on the constant therefore requires orbit points
\emph{beyond} that initial segment to occupy new residue classes, which is
exactly what the $2$-adic behaviour of the orbit controls.
\S\ref{sub:outlook} reports computations: empirically the orbit occupies
every residue class modulo $2^{m}$, at the saturation rate of a uniformly
random sequence of residues.
\end{remark}

The reachable-residue question behind Remark~\ref{rem:dubfloor} makes sense at
every modulus, not only at the powers of $2$ that the factors of $w$ see; the first
instance beyond those is free.

\begin{proposition}\label{prop:mod3}
For every $x_{0}$ and every $n$, $\;x_{n+1}\not\equiv1\pmod3$.
\end{proposition}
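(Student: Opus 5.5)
The plan is a direct residue computation based on the identity $2x_{n+1}=3x_{n}+w_{n}$ of Proposition~\ref{prop:sel}. Reducing this identity modulo $3$ gives
\[
2x_{n+1}\equiv w_{n}\pmod 3 .
\]
Since $2\equiv -1 \pmod 3$, this says $x_{n+1}\equiv -w_{n}\pmod 3$.

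The argument then splits on the letter $w_{n}\in\{0,1\}$. If $w_{n}=0$, then $x_{n+1}\equiv 0\pmod 3$. If $w_{n}=1$, then $x_{n+1}\equiv -1\equiv 2\pmod 3$. In neither case is $x_{n+1}\equiv 1\pmod 3$, which is the claim.

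Equivalently, in terms of the orbit itself: if $x_{n}=2t$ is even, then $x_{n+1}=3t$. If $x_{n}=2t+1$ is odd, then $x_{n+1}=\lceil (6t+3)/2\rceil=3t+2$.

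No step is a genuine obstacle. The only point to watch is that the statement concerns $x_{n+1}$ and not $x_{0}$: the starting value $x_{0}$ is arbitrary and may well be $\equiv 1\pmod 3$. The argument above uses nothing about $x_{0}$, not even $x_{0}\ge 1$, so the statement holds for every $x_{0}\in\N$, as claimed.

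In a formalization, I would carry out the case split on $x_{n}\bmod 2$ and close each branch with integer arithmetic, avoiding any reasoning about the ceiling over $\mathbb{Q}$.
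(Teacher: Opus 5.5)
Your proposal is correct and takes the same approach as the paper, which likewise reduces $2x_{n+1}=3x_{n}+w_{n}$ modulo $3$ to get $x_{n+1}\equiv 2w_{n}\equiv -w_{n}\pmod 3$ and concludes from $w_{n}\in\{0,1\}$. Your explicit even/odd check ($x_{n+1}=3t$ or $3t+2$) is a harmless restatement of the same computation.
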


\begin{proof}[Proof sketch]
$2x_{n+1}=3x_{n}+w_{n}$ gives $x_{n+1}\equiv2w_{n}\pmod3$, and $w_{n}\in\{0,1\}$.
\end{proof}

The dictionary of Proposition~\ref{prop:residues} is also a cost, not only a
convenience: a repeated factor forces a large power of $2$ to divide a
difference of orbit values, while the orbit has only $(3/2)^{c}$ room.

\begin{theorem}\label{thm:ceiling}
Let $x_{0}\ge1$ and $a<c$. If $w$ has a repeated factor of length $k$ at $(a,c)$,
then
\[
2^{\,k+c}<3^{c}(x_{0}+1).
\]
\end{theorem}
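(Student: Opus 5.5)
By Theorem~\ref{thm:rigid}, a repeated factor of length $k$ at $(a,c)$ gives $2^{k}\mid x_{c}-x_{a}$. Since the orbit is strictly increasing (Proposition~\ref{prop:sel}) and $a<c$, the difference $x_{c}-x_{a}$ is a positive multiple of $2^{k}$. So we get the lower bound $x_{c}-x_{a}\ge 2^{k}$, and in particular $x_{c}\ge 2^{k}+x_{a}\ge 2^{k}+x_{0}>2^{k}$.

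For the upper bound we need a growth estimate on the orbit. The plan is to prove $x_{j}<(x_{0}+1)(3/2)^{j}$ for all $j$. This follows from Proposition~\ref{prop:window} once we know $\Kc<x_{0}+1$, since then $x_{j}\le\Kc(3/2)^{j}<(x_{0}+1)(3/2)^{j}$. To get $\Kc<x_{0}+1$, write $\Kc=x_{0}+\tfrac13\sum_{j}w_{j}(2/3)^{j}$; the sum is at most $3$, with equality only if $w_{j}=1$ for all $j$. That is impossible: $w_{0}=w_{1}=\cdots=1$ would make the word periodic, and we cannot cite Theorem~\ref{thm:aper} here if this theorem is meant to reprove aperiodicity. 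We can avoid both issues by working with integers. The quantity $y_{n}=x_{n}+1$ satisfies $2y_{n+1}=3x_{n}+w_{n}+2\le 3y_{n}$, because $w_{n}\le1$. Hence $y_{n+1}\le\tfrac32 y_{n}$, with equality exactly when $w_{n}=1$. Iterating gives $2^{c}(x_{c}+1)\le 3^{c}(x_{0}+1)$, so $2^{c}x_{c}<3^{c}(x_{0}+1)$. This uses only Proposition~\ref{prop:sel} and is easily formalized as an induction.

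To combine the two bounds, multiply $2^{k}<x_{c}$ (or more precisely $2^{k}\le x_{c}-x_{a}<x_{c}+1$) by $2^{c}$. Then
\[
2^{k+c}\le 2^{c}(x_{c}-x_{a})<2^{c}(x_{c}+1)\le 3^{c}(x_{0}+1),
\]
which is the claim. The first inequality uses divisibility together with positivity of $x_{c}-x_{a}$, the second is trivial, and the third is the integer growth bound just proved.

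The only delicate point is getting strictness without appealing to aperiodicity or to $\Kc<x_{0}+1$. Using $x_{c}-x_{a}<x_{c}+1$ and the non-strict $y$-recursion handles this cleanly, so I expect no real obstacle. The main care needed is in recording divisibility in the right direction: $2^{k}$ divides $x_{c}-x_{a}$, not $x_{c+k}-x_{a+k}$.
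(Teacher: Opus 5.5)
Your proof is correct and is essentially the paper's own argument: Theorem~\ref{thm:rigid} plus monotonicity gives $2^{k}\le x_{c}-x_{a}<x_{c}+1$, and the integer submultiplicativity $2(x_{n+1}+1)\le 3(x_{n}+1)$ gives $2^{c}(x_{c}+1)\le 3^{c}(x_{0}+1)$. You were right to drop the detour through $\Kc<x_{0}+1$. In the paper that bound is derived from aperiodicity (proof of Theorem~\ref{thm:cf}), and aperiodicity (Theorem~\ref{thm:aper2}) is itself derived from this theorem, so citing it here would be circular.
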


\begin{proof}[Proof sketch]
By Theorem~\ref{thm:rigid}, $2^{k}\mid x_{c}-x_{a}$, and $x_{a}<x_{c}$ by
Proposition~\ref{prop:sel}, so $2^{k}\le x_{c}-x_{a}<x_{c}+1$; multiplying by
$2^{c}$ gives $2^{k+c}<2^{c}(x_{c}+1)$. It remains to see that the orbit has only
$(3/2)^{c}$ room, i.e.\ that $2^{n}(x_{n}+1)\le3^{n}(x_{0}+1)$ for every $n$: the
shifted quantity $x_{n}+1$ is submultiplicative under the ceiling rule, since
$2(x_{n+1}+1)=3x_{n}+w_{n}+2\le3(x_{n}+1)$ because $w_{n}\le1$, so induction
applies. At $n=c$ this chains onto the display.
\end{proof}

\begin{corollary}\label{cor:slope}
Let $x_{0}=1$ and $a<c$. A repeated factor of length $k$ at $(a,c)$ obeys the integer
inequality $41k\le24c+40$; that is, $k\le\frac{24}{41}c+1\approx0.585\,c$.
\end{corollary}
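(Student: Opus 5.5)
Specialize Theorem~\ref{thm:ceiling} to $x_{0}=1$, which gives $2^{k+c}<2\cdot3^{c}$, i.e.\ $2^{k+c-1}<3^{c}$, hence $2^{k+c-1}\le 3^{c}-1$. In the integer form this reads $k-1<c\log_{2}(3/2)$, and the task is to replace $\log_{2}(3/2)=0.58496\ldots$ by the rational upper bound $24/41=0.58536\ldots$ using only integer arithmetic. The certificate is the one quoted in the proof of Corollary~\ref{cor:dubfloor}: $3^{41}\le 2^{65}$, which says $41\log_{2}3\le 65$, i.e.\ $41\log_{2}(3/2)\le 24$.

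The plan is to argue by contradiction. Suppose $41k\ge 24c+41$, so $41(k-1)\ge 24c$. Raise the strict inequality $2^{k-1}\cdot 2^{c}<3^{c}$ to the $41$st power to get $2^{41(k-1)}\cdot2^{41c}<3^{41c}$. Bound the right side with the certificate, $3^{41c}=(3^{41})^{c}\le 2^{65c}$, which gives $2^{41(k-1)}<2^{24c}$ and hence $41(k-1)<24c$, contradicting the assumption. So $41k\le 24c+40$, and dividing by $41$ gives $k\le\frac{24}{41}c+\frac{40}{41}\le\frac{24}{41}c+1$. The numerical value $24/41\approx0.585$ is then only a remark.

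None of this needs $a<c$ beyond its use in Theorem~\ref{thm:ceiling}, and no case split on small $c$ is required, since the argument is uniform in $c\ge1$ (and $c\ge1$ is automatic from $a<c$). The only step that is not routine is verifying $3^{41}\le 2^{65}$. In the formalization this is a \texttt{decide}/\texttt{norm\_num} computation. On paper it can be checked as $3^{41}\approx3.65\times10^{19}$ against $2^{65}\approx3.69\times10^{19}$, which holds with a margin of about $1\%$, so the certificate is tight but true. This tightness is the main point of care: a slightly smaller slope would need a larger certificate pair $(p,q)$ with $3^{p}\le2^{p+q}$.
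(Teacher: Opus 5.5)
Your proof is correct and is essentially the paper's argument. Both specialize Theorem~\ref{thm:ceiling} to $x_{0}=1$, raise it to the $41$st power, and apply the certificate $3^{41}\le2^{65}$ to reach the exponent inequality $41(k+c)<65c+41$, i.e.\ $41k\le24c+40$; your version moves the factor $2$ into the exponent and argues by contradiction, which is only a cosmetic difference.
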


\begin{proof}[Proof sketch]
Raise Theorem~\ref{thm:ceiling} to the $41$st power and apply the integer
certificate $3^{41}\le2^{65}$, which encodes $\log_{2}3\le65/41$ without reals
or logarithms. The resulting exponent inequality is $41(k+c)<65c+41$.
\end{proof}

The slope $24/41>\log_{2}(3/2)=0.5849\ldots$ is the certified form of the
repeated-factor ceiling. The ceiling itself is exact and loses nothing: in real
logarithms Theorem~\ref{thm:ceiling} reads
$k<c\log_{2}(3/2)+\log_{2}(x_{0}+1)$, with slope exactly $\log_{2}(3/2)$;
Corollary~\ref{cor:slope} is its integer-certified approximation, which is what
the formal statements downstream consume. \S\ref{sub:ledger} shows that this
ceiling is what closes off a whole family of attacks on the word.

\begin{theorem}[= Theorem~\ref{thm:aper}]\label{thm:aper2}
For $x_{0}\ge1$, the word $w$ is not eventually periodic: there are no $N$ and
$p\ge1$ with $w_{n+p}=w_{n}$ for all $n\ge N$.
\end{theorem}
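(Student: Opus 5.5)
Suppose, for contradiction, that $w_{n+p}=w_{n}$ for all $n\ge N$, with some $N$ and $p\ge1$. The plan is the one announced in the overview: a period supplies repeated factors of \emph{every} length at one fixed pair of positions, and Theorem~\ref{thm:ceiling} forbids this. Concretely, take $a:=N$ and $c:=N+p$, so $a<c$ because $p\ge1$. For every $k\ge0$ and every $i<k$ we have $w_{c+i}=w_{N+i+p}=w_{N+i}=w_{a+i}$, since $N+i\ge N$. So for every $k$ the word $w$ has a repeated factor of length $k$ at $(a,c)$ in the sense of Definition~\ref{def:rep}.

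Now apply Theorem~\ref{thm:ceiling}, which needs $x_{0}\ge1$ and $a<c$. It gives $2^{k+c}<3^{c}(x_{0}+1)$ for every $k$. The right-hand side is a fixed natural number, independent of $k$. Choosing $k:=3^{c}(x_{0}+1)$ yields $2^{k+c}\ge2^{k}>k=3^{c}(x_{0}+1)$, a contradiction. The only arithmetic fact needed is $2^{k}>k$.

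Equivalently, one can bypass Theorem~\ref{thm:ceiling} and argue from Theorem~\ref{thm:rigid} directly. The agreement of the length-$k$ factors at $a$ and $c$ for all $k$ gives $2^{k}\mid x_{c}-x_{a}$ for all $k$. By Proposition~\ref{prop:sel} the difference $x_{c}-x_{a}$ is a positive integer, and no positive integer is divisible by every power of $2$. Either route works; I would use the ceiling route, since it is already packaged with the strict monotonicity built in.

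There is no real obstacle. The only points needing care are bookkeeping: that $c=a+p$ with $p\ge1$ makes $a<c$, which is where $p\ge1$ is used; that the periodicity hypothesis is applied only at indices at least $N$; and that $x_{0}\ge1$ is exactly what makes the orbit strictly increasing, so that $x_{c}-x_{a}\ne0$. For $x_{0}=0$ the orbit is constantly $0$ and $w$ is periodic, so that hypothesis is genuinely load-bearing.
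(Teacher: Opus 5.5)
Your proposal is correct and matches the paper's proof: periodicity gives a repeated factor of every length at the fixed pair $(N,N+p)$, and Theorem~\ref{thm:ceiling} with $k=3^{c}(x_{0}+1)$ and $k<2^{k}\le2^{k+c}$ gives the contradiction. Your alternative via Theorem~\ref{thm:rigid} ($2^{k}\mid x_{c}-x_{a}>0$ for all $k$) is also valid; it is the first step of the ceiling's proof, without the growth bound.
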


\begin{proof}[Proof sketch]
Eventual periodicity gives a repeated factor at the \emph{fixed} pair $(N,N+p)$ of
\emph{every} length $k$. But Theorem~\ref{thm:ceiling} caps every such $k$ by
the single number $3^{c}(x_{0}+1)$ with $c=N+p$ fixed; taking
$k=3^{c}(x_{0}+1)$ and using $k<2^{k}\le2^{k+c}$ contradicts it.
\end{proof}

\begin{proof}[Proof of Theorem~\ref{thm:cf}]
Write $\Kc(y)$ for the constant of the orbit started at $y$. The orbit is
shift-invariant --- running $n+j$ steps from $x_{0}$ is running $j$ steps from
$x_{n}$ --- so the word is too, and splitting the series of
Definition~\ref{def:K} at $n$ and reindexing gives $\Kc(x_{n})=\Kc(x_{0})
(3/2)^{n}$. It therefore suffices to treat $n=0$ at the initial condition
$x_{n}$, which is positive: that is, to show $x_{n}\le \Kc(x_{n})<x_{n}+1$.

The left inequality is Proposition~\ref{prop:window}. For the right one it is
enough to see that $\Kc<x_{0}+1$ strictly for every $x_{0}\ge1$, and then to
apply this at $x_{n}$. Some letter of $w$ is $0$: an all-ones word would be
periodic with period $1$, contradicting Theorem~\ref{thm:aper2}. So
$\sum_{j}w_{j}(2/3)^{j}<\sum_{j}(2/3)^{j}=3$, the terms being dominated termwise
and one of them strictly, and Definition~\ref{def:K} turns this into
$\Kc<x_{0}+1$.
\end{proof}

\section{Transcendence criteria for \texorpdfstring{$\Kc$}{K}}\label{sec:main}

We prove Theorem~\ref{thm:main}. Suppose $\Kc$ is algebraic. Either it is
irrational, or it is rational. The two halves are closed by unrelated engines,
and \S\ref{sub:compose} assembles them.

\subsection{The algebraic irrational half}\label{sub:af}

\begin{theorem}\label{thm:t1a}
Let $x_{0}\ge1$. If $\Kc$ is an algebraic irrational, then $w$ is not automatic.
\end{theorem}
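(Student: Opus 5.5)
We argue by contradiction, using Theorem~\ref{thm:af} as a trap with two doors. Suppose $\Kc$ is algebraic and irrational, and suppose $w$ is automatic. We take $a_{j}:=w_{j}$ and $\alpha:=2/3$ in Theorem~\ref{thm:af}. The hypotheses are checked directly. By Proposition~\ref{prop:sel} each $w_{j}\in\{0,1\}$, so the sequence consists of non-negative integers bounded by $B=1$. It is automatic by assumption. Finally, $\alpha=2/3\in\Q$ with $0<|\alpha|<1$. Theorem~\ref{thm:af} therefore says that $S:=\sum_{j\ge0}w_{j}(2/3)^{j}$ is either transcendental or rational.

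Next we transfer this alternative to $\Kc$ through Proposition~\ref{prop:Klim}, which gives $S=3(\Kc-x_{0})$. Since $x_{0}$ is an integer, $\Kc=x_{0}+S/3$ is an affine image of $S$ with rational coefficients. So $S$ is transcendental if and only if $\Kc$ is, and $S$ is rational if and only if $\Kc$ is.

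Each branch now contradicts a hypothesis. If $S$ is transcendental, then $\Kc$ is transcendental, contradicting its algebraicity. If $S$ is rational, then $\Kc$ is rational, contradicting its irrationality. Hence $w$ cannot be automatic.

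In line with Remark~\ref{rem:branch}, no argument is ever drawn from a known rational value here: the rational branch is closed only by the irrationality hypothesis on $\Kc$.

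There is no real obstacle at this level, since all the depth sits inside Theorem~\ref{thm:af}. The only point needing care is the interface between the two statements. The ``automatic sequence of non-negative integers'' in Theorem~\ref{thm:af} must be exactly the word $w$ under the paper's notion of $k$-automatic, namely a finite $k$-kernel. Likewise, the series value must be identified with $3(\Kc-x_{0})$ as a real number, which is why $\Kc$ was defined by the series in Definition~\ref{def:K}.
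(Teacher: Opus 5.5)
Your proposal is correct and follows the paper's proof essentially step for step. You apply Theorem~\ref{thm:af} to the bounded $\{0,1\}$-valued word at $\alpha=2/3$, transfer the alternative to $\Kc$ through $S=3(\Kc-x_{0})$ from Proposition~\ref{prop:Klim}, and close the transcendental branch by algebraicity and the rational branch by irrationality.
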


\begin{proof}
Suppose $w$ is automatic. Its letters lie in $\{0,1\}$
(Proposition~\ref{prop:sel}), so they are bounded, and $\alpha=2/3$ is rational
with $0<|\alpha|<1$. Theorem~\ref{thm:af} therefore applies and gives: either
$\sum_{j}w_{j}(2/3)^{j}$ is transcendental, or it is rational. By
Proposition~\ref{prop:Klim} that sum is $3(\Kc-x_{0})$ with $x_{0}\in\N$, so the
alternative transfers verbatim to $\Kc$: the first branch would make $\Kc$
transcendental, contradicting algebraicity, and the second would make
$\Kc=r/3+x_{0}$ rational for some $r\in\Q$, contradicting irrationality. Both
branches die, so $w$ is not automatic.
\end{proof}

\begin{corollary}\label{cor:t1acontra}
If $w$ is automatic and $\Kc$ is irrational, then $\Kc$ is transcendental.
\end{corollary}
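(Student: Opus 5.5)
This is the contrapositive of Theorem~\ref{thm:t1a}, read in the other direction, together with the trivial step from ``not algebraic'' to ``transcendental''. I would argue directly rather than quote Theorem~\ref{thm:t1a} as a black box, since the direct route shows that only one branch of Theorem~\ref{thm:af} is needed.

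Assume $w$ is automatic and $\Kc$ is irrational. By Proposition~\ref{prop:sel} the letters $w_{j}$ lie in $\{0,1\}$, so they form an automatic sequence of non-negative integers bounded by $B=1$. The point $\alpha=2/3$ is rational with $0<|\alpha|<1$. Theorem~\ref{thm:af} therefore applies to $S:=\sum_{j\ge0}w_{j}(2/3)^{j}$: either $S$ is transcendental, or $S$ is rational.

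Proposition~\ref{prop:Klim} gives $S=3(\Kc-x_{0})$, equivalently $\Kc=x_{0}+S/3$ with $x_{0}\in\N$.
\begin{itemize}
\item If $S$ were rational, $\Kc$ would be rational, which contradicts the hypothesis. So the rational branch is excluded.
\item Hence $S$ is transcendental. Since $\Kc$ and $S$ differ by an affine map with rational coefficients, and $\Qbar$ is a field containing $\Q$, $\Kc$ algebraic would force $S=3\Kc-3x_{0}$ algebraic. Therefore $\Kc$ is transcendental.
\end{itemize}

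The only point needing care is the one flagged in Remark~\ref{rem:branch}. Theorem~\ref{thm:af} is used only to discard its rational branch, and that is done by the irrationality hypothesis on $\Kc$, not by any known value of the series. Apart from that, the argument is bookkeeping: closure of $\Qbar$ and of $\Q$ under $t\mapsto x_{0}+t/3$ and its inverse, and checking that the boundedness and automaticity hypotheses of Theorem~\ref{thm:af} are met literally.

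Equivalently, the corollary follows from Theorem~\ref{thm:t1a} by contraposition: if $\Kc$ is irrational and not transcendental, it is an algebraic irrational, so $w$ is not automatic.
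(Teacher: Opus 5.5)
Your proposal is correct and matches the paper, which gives this corollary no separate proof and leaves it as the contrapositive of Theorem~\ref{thm:t1a}: an irrational $\Kc$ that is not transcendental is an algebraic irrational, so $w$ is not automatic. Your direct route is the proof of Theorem~\ref{thm:t1a} written out in place, with the transcendental branch of Theorem~\ref{thm:af} becoming the conclusion instead of being ruled out.
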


\subsection{The rational half: a Diophantine kernel}\label{sub:kernel}

Assume now $\Kc=\delta\in\Q$. We show that no linear function bounds $\cx(w,\cdot)$
from above, from which non-automaticity follows by Cobham's theorem. The route is the one used in
\cite{RS26} for the steering word of $(3/2)^{n}$, with the multiplier slot opened
up; we indicate below the three places where $\delta\ne1$ costs a real argument.

Write $\theta_{n}:=\Kc(3/2)^{n}-x_{n}$ for the \emph{deviation} of the orbit. By
Theorem~\ref{thm:cf}, $\theta_{n}\in[0,1)$, also $\theta_{n}$ is
the fractional part $\{\Kc(3/2)^{n}\}$. The deviations are the
Vijayaraghavan sequence of $\Kc$ at ratio $3/2$, the object of the
$(3/2)^{n}$-distribution literature (Flatto--Lagarias--Pollington \cite{FLP95}),
and every statement about $\theta$ below is a statement about
$\{\Kc(3/2)^{n}\}$.

\begin{proposition}\label{prop:violator}
Let $x_{0}\ge1$ and suppose $\Kc=\delta\in\Q$. A repeated factor of length $k$ at
$(a,c)$ forces
\[
\bigl\lVert\,\delta\bigl((3/2)^{c}-(3/2)^{a}\bigr)\,\bigr\rVert\ \le\
(2/3)^{k},
\]
the value lying that close to the integer $x_{c}-x_{a}$.
\end{proposition}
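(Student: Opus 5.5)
The plan is to write $\delta(3/2)^{c}-\delta(3/2)^{a}$ in terms of orbit values and deviations, $\delta(3/2)^{n}=x_{n}+\theta_{n}$, so that
\[
\delta\bigl((3/2)^{c}-(3/2)^{a}\bigr)=(x_{c}-x_{a})+(\theta_{c}-\theta_{a}).
\]
It then suffices to show $|\theta_{c}-\theta_{a}|\le(2/3)^{k}$. Rationality of $\delta$ plays no role in this step; it only matters downstream.

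For that bound I express each deviation as a tail of the series in Definition~\ref{def:K}. Apply the shift argument from the proof of Theorem~\ref{thm:cf}: $\Kc(3/2)^{n}=\Kc(x_{n})=x_{n}+\frac13\sum_{i\ge0}w_{n+i}(2/3)^{i}$. This gives
\[
\theta_{n}=\tfrac13\sum_{i\ge0}w_{n+i}(2/3)^{i}.
\]
Equivalently, split the series at $n$ and use Proposition~\ref{prop:partial}, then multiply by $(3/2)^{n}$. Subtracting the two instances at $n=a$ and $n=c$, the repeated factor gives $w_{a+i}=w_{c+i}$ for $i<k$, so those first $k$ terms cancel. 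What remains is
\[
\theta_{c}-\theta_{a}=\tfrac13\sum_{i\ge k}(w_{c+i}-w_{a+i})(2/3)^{i}.
\]
Each coefficient lies in $\{-1,0,1\}$, so the absolute value is at most $\frac13\sum_{i\ge k}(2/3)^{i}=\frac13\cdot3(2/3)^{k}=(2/3)^{k}$.

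Combining the two steps, $\delta((3/2)^{c}-(3/2)^{a})$ lies within $(2/3)^{k}$ of the integer $x_{c}-x_{a}$. Since the distance to the nearest integer is at most the distance to any particular integer, the claimed bound on $\nrm{\cdot}$ follows.

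The only delicate point is the tail identity for $\theta_{n}$. It requires that the constant of the shifted orbit equals $\Kc(3/2)^{n}$, and that is exactly the shift-invariance already established in the proof of Theorem~\ref{thm:cf}. I could instead use the cruder bound from Theorem~\ref{thm:cf} and Proposition~\ref{prop:window}, $\theta_{n}\in[0,1)$, but that would lose the cancellation. So the series form is the one to use.
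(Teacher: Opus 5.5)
Your proof is correct, and it is the paper's argument in different packaging. The paper substitutes $x_{n}=\Kc(3/2)^{n}-\theta_{n}$ into the circuit-sum identity of Proposition~\ref{prop:circuit} to get $W(a,k)=3^{k}\theta_{a}-2^{k}\theta_{a+k}$. It then equates the circuit sums at $a$ and $c$, obtaining $3^{k}(\theta_{c}-\theta_{a})=2^{k}(\theta_{c+k}-\theta_{a+k})$, and bounds the right-hand bracket by $1$ using $\theta\in[0,1)$.

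Dividing the circuit identity by $3^{k}$ gives $\theta_{a}=\frac13\sum_{i<k}w_{a+i}(2/3)^{i}+(2/3)^{k}\theta_{a+k}$. This is exactly your tail formula split at $i=k$, and your termwise estimate of the leftover tail is precisely the statement $|\theta_{c+k}-\theta_{a+k}|\le1$. So the two routes are the same computation. The paper's version stays finite: one integer identity plus an a priori window on $\theta$, which is convenient for the formalization and isolates the unconditional Corollary~\ref{cor:contract}. Yours works with the infinite series directly and needs only Definition~\ref{def:K} and Proposition~\ref{prop:partial}. The paper cites $\theta\in[0,1)$ from Theorem~\ref{thm:cf}, though the closed window of Proposition~\ref{prop:window} would already suffice.

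One correction to your final paragraph. The cruder bound $\theta_{n}\in[0,1)$ loses the cancellation only if you apply it to $\theta_{a}$ and $\theta_{c}$ themselves. Once the circuit identity has pulled out the factor $(2/3)^{k}$, applying it to $\theta_{a+k}$ and $\theta_{c+k}$ gives exactly your bound, and that is how the paper proceeds.
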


\begin{proof}[Proof]
Substituting $x_{n}=\Kc(3/2)^{n}-\theta_{n}$ into Proposition~\ref{prop:circuit}
re-expresses the circuit sum in the deviations alone,
\[
W(a,k)=3^{k}\theta_{a}-2^{k}\theta_{a+k},
\]
the $\Kc$-terms cancelling because $2^{k}(3/2)^{a+k}=3^{k}(3/2)^{a}$.
The deviations survive which are now contracted by the repeated factor.
Equal factors give equal circuit sums,
$W(a,k)=W(c,k)$, so the display at $a$ and at $c$ combine into
\[
3^{k}(\theta_{c}-\theta_{a})=2^{k}(\theta_{c+k}-\theta_{a+k});
\]
the right-hand bracket has absolute value at most $1$ because $\theta\in[0,1)$,
and dividing by $3^{k}$ leaves $|\theta_{c}-\theta_{a}|\le(2/3)^{k}$.

It remains to read this at the multiplier. Rationality enters only here: since
$\Kc=\delta$, the definition $\theta_{n}=\delta(3/2)^{n}-x_{n}$ rearranges to
\[
\delta\bigl((3/2)^{c}-(3/2)^{a}\bigr)-(x_{c}-x_{a})\ =\ \theta_{c}-\theta_{a},
\]
whose right-hand side we have just bounded by $(2/3)^{k}$. So the value lies
within $(2/3)^{k}$ of the \emph{integer} $x_{c}-x_{a}$, and its distance to the
nearest integer is at most its distance to that one. (Once $k\ge2$ the bound is
below $1/2$, and $x_{c}-x_{a}$ is then the nearest integer outright; the estimate
above needs no such proviso.)
\end{proof}

This is the only place where the rationality of $\Kc$ is used in the whole chain,
and it is used only to land the value in $\Q$, where the approximation theorems
live.
For arbitrary \emph{real}
multipliers the finiteness below is false, by a Liouville construction
$\delta=\sum_{k}2^{-y_{k}}$ with $y_{k+1}\ge(1+\varepsilon)y_{k}$.

The middle display of the proof, read before rationality enters, deserves its
own statement.

\begin{corollary}[unconditional contraction]\label{cor:contract}
Let $x_{0}\ge1$. A repeated factor of length $k$ at $(a,c)$ forces
\[
\bigl|\{\Kc(3/2)^{c}\}-\{\Kc(3/2)^{a}\}\bigr|\ \le\ (2/3)^{k},
\]
with no hypothesis on $\Kc$ whatsoever.
\end{corollary}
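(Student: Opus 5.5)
The plan is to read the corollary directly off the first half of the proof of Proposition~\ref{prop:violator}, where rationality has not yet been used, and to identify the deviations with fractional parts via Theorem~\ref{thm:cf}.

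\emph{Step 1: deviations are fractional parts.} Set $\theta_{n}:=\Kc(3/2)^{n}-x_{n}$. Proposition~\ref{prop:window} gives $\theta_{n}\ge0$. Theorem~\ref{thm:cf}, proved above for every $x_{0}\ge1$ via the strict inequality $\Kc(x_{n})<x_{n}+1$, gives $\theta_{n}<1$. Hence $\theta_{n}\in[0,1)$, so $x_{n}=\lfloor\Kc(3/2)^{n}\rfloor$ and $\theta_{n}=\{\Kc(3/2)^{n}\}$. The statement then becomes $|\theta_{c}-\theta_{a}|\le(2/3)^{k}$.

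\emph{Step 2: the circuit sum in deviations.} Substitute $x_{a}=\Kc(3/2)^{a}-\theta_{a}$ and $x_{a+k}=\Kc(3/2)^{a+k}-\theta_{a+k}$ into Proposition~\ref{prop:circuit}. The $\Kc$-terms cancel because $2^{k}(3/2)^{a+k}=3^{k}(3/2)^{a}$, leaving
\[
W(a,k)=3^{k}\theta_{a}-2^{k}\theta_{a+k}.
\]
This is an identity in $\R$. The only care needed is that $W(a,k)$ is a natural number and is cast to $\R$ before the substitution.

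\emph{Step 3: contraction.} A repeated factor of length $k$ at $(a,c)$ gives $W(a,k)=W(c,k)$ term by term from Definition~\ref{def:W}. Equating the two instances of the display yields
\[
3^{k}(\theta_{c}-\theta_{a})=2^{k}(\theta_{c+k}-\theta_{a+k}).
\]
Since both $\theta_{c+k}$ and $\theta_{a+k}$ lie in $[0,1)$, their difference has absolute value less than $1$. Dividing by $3^{k}$ gives the claim.

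No step is a real obstacle; there is no hypothesis on $\Kc$ anywhere in the argument. The only point that needs a word is that the upper bound $\theta_{n}<1$ rests on Theorem~\ref{thm:cf}, and through it on aperiodicity (Theorem~\ref{thm:aper2}). With only the non-strict bound of Proposition~\ref{prop:window}, the bracket would still be bounded by $1$ in absolute value, and the inequality would survive. Without Theorem~\ref{thm:cf}, however, the deviations could not be identified with fractional parts, which is why I would cite Theorem~\ref{thm:cf} rather than the window.
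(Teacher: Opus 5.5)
Your proposal is correct and is the paper's own argument: the corollary is the middle display of the proof of Proposition~\ref{prop:violator}, read before rationality enters. As there, you rewrite the circuit sum as $W(a,k)=3^{k}\theta_{a}-2^{k}\theta_{a+k}$, equate the two instances, bound the bracket by $1$, and identify $\theta_{n}$ with $\{\Kc(3/2)^{n}\}$ via Theorem~\ref{thm:cf}. Your side remark is also accurate: Proposition~\ref{prop:window} alone already gives the bound on $|\theta_{c}-\theta_{a}|$, and Theorem~\ref{thm:cf} is needed only to read the deviations as fractional parts.
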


The parity orbit cannot shadow itself for $k$ steps without dragging the
corresponding fractional parts of $\Kc(3/2)^{n}$ together to within $(2/3)^{k}$;
conversely, the repetition ceiling (Corollary~\ref{cor:slope}) caps how long the
shadowing can last. Together they are a two-way dictionary between the
combinatorics of $w$ and the distribution of $\{\Kc(3/2)^{n}\}$: every new
distributional input on $(3/2)^{n}$ modulo one converts into a complexity
statement, and every combinatorial fact about $w$ converts into a rigidity
statement about the Vijayaraghavan sequence.

\begin{definition}\label{def:viol}
For $\delta,\vartheta\in\Q$ let
\[
\mathcal V(\delta,\vartheta):=\Bigl\{(a,c)\in\N^{2}\ :\ 2\le a<c,\
\bigl\lVert\delta\bigl((3/2)^{c}-(3/2)^{a}\bigr)\bigr\rVert\le
\vartheta^{\,c}\Bigr\}.
\]
Its elements are the \emph{violators} at scale $\vartheta$: the pairs at which
the value lies closer to an integer than $\vartheta^{c}$.
\end{definition}

\begin{theorem}\label{thm:kernel}
For every rational $\delta\ne0$ and every rational $\vartheta\in(0,1)$, the set
$\mathcal V(\delta,\vartheta)$ is finite.
\end{theorem}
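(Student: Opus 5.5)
The plan is to reduce the finiteness of $\mathcal V(\delta,\vartheta)$ to the two approximation theorems over $\Q$ that the paper derives from Theorem~\ref{thm:subspace}: the Main Theorem of \cite{CZ04} at rational $\delta$ (Theorem~\ref{thm:czalg}), and the repaired pair theorem of Remark~\ref{rem:nkr}. All unknowns live in the finitely generated group $\Gamma=\langle 2,3\rangle\le\Q^{*}$.

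Write $d:=c-a\ge1$. A violator gives
\[
\delta\bigl((3/2)^{c}-(3/2)^{a}\bigr)=\delta u_{1}+\delta u_{2},
\qquad u_{1}=(3/2)^{c},\quad u_{2}=-(3/2)^{a},
\]
with $H(u_{1})H(u_{2})=3^{c+a}\le 9^{c}$. Hence $\vartheta^{c}\le (H(u_{1})H(u_{2}))^{-\varepsilon_{1}}$ for the fixed choice $\varepsilon_{1}:=\log(1/\vartheta)/(2\log 9)>0$. Suppose, for contradiction, that $\mathcal V(\delta,\vartheta)$ is infinite. Passing to an infinite subfamily, either $d$ is bounded, and then after a further pass it is constant, or $d$ takes infinitely many values, and then we may assume all the $d$'s are pairwise distinct.

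The first step disposes of the degenerate case $\nrm{\cdot}=0$ (this is the content I expect of Lemma~\ref{lem:degen}). Put $\delta=r/s$ in lowest terms. The value is
\[
\frac{r\,3^{a}(3^{d}-2^{d})}{s\,2^{c}},
\]
and $3^{a}(3^{d}-2^{d})$ is odd. So if the value is an integer, then $2^{c}$ divides $r$, and $c\le v_{2}(r)$. Only finitely many pairs are exact integers, and after discarding them we may assume strict positivity.

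In the constant-gap case $d=d_{0}$, set $\delta':=\delta((3/2)^{d_{0}}-1)\in\Q^{*}$ and $u=(3/2)^{a}$, $q=1$. Then $\nrm{\delta' u}\le\vartheta^{c}\le\vartheta^{a}$, and $|\delta'u|>1$ for large $a$. A rational number is pseudo-Pisot only if it is an integer, which is excluded by positivity. Since $H(u)=3^{a}$, the bound $\vartheta^{a}$ beats $H(u)^{-\varepsilon}$ for small $\varepsilon$. Theorem~\ref{thm:czalg} at rational $\delta$ then leaves only finitely many $a$, which contradicts the infinitude of the family.

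In the distinct-gap case I apply the repaired pair theorem with $\alpha_{1}=\alpha_{2}=\delta$. We need to check its hypotheses:
\begin{itemize}
\item $|u_{i}|\ge1$ holds.
\item $u_{1}\ne-u_{2}$ holds since $d\ge1$.
\item Strict positivity of $\nrm{\cdot}$ was secured above.
\item The ratios $u_{1}/u_{2}=-(3/2)^{d}$ are pairwise distinct because the $d$'s are, and the reversed ratios are then distinct too.
\end{itemize}
The theorem then produces a member with $u_{1},u_{2}\in\Z$. But $a\ge2$ makes $(3/2)^{a}\notin\Z$, a contradiction. So both cases are impossible and $\mathcal V(\delta,\vartheta)$ is finite.

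The main obstacle is bookkeeping the hypotheses of the pair theorem uniformly over the family. It needs one $\varepsilon_{1}$ for the whole family, which is exactly the failure point noted in Remark~\ref{rem:nkr}; here that is secured by the explicit height comparison $3^{c+a}\le 9^{c}$. It also needs distinct ratios, which is what forces the separate constant-gap case through Theorem~\ref{thm:czalg}. If the repaired pair theorem turned out to require distinctness of the $u_{i}$ themselves as well, I would extract a further subsequence with $c$ strictly increasing; this is harmless since every fixed $c$ carries at most $c$ pairs.
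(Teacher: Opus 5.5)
Your proof is correct and is essentially the paper's: you discard the finitely many exact-integer pairs by the parity argument of Lemma~\ref{lem:degen}, send a constant gap to Theorem~\ref{thm:czalg} at a rational multiplier (your explicit discharge of the pseudo-Pisot clause is a detail the paper leaves implicit), and send pairwise-distinct gaps to the repaired pair theorem with a single uniform $\varepsilon_1$. The only blemish is cosmetic: with $u_2=-(3/2)^a$ your pairs lie in $\langle -1,2,3\rangle$ rather than $\langle 2,3\rangle$, so either enlarge $\Gamma$ accordingly or take $\alpha_2=-\delta$ and $u_2=(3/2)^a$, as the paper does.
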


Theorem~\ref{thm:kernel} is what we call the \emph{kernel} of the rational half,
here and below; the word is used for it and its variants throughout, and has
nothing to do with the $k$-kernel of an automatic sequence (\S\ref{sec:intro}).
The case $\delta=1$ is \cite[Thm. 5.6]{RS26}. Opening the multiplier slot is not
free, and one of the steps that $\delta=1$ concealed breaks immediately.

\begin{lemma}[degeneracy]\label{lem:degen}
Let $\delta\in\Q$, $\delta\ne0$, and $a<c$ with $|\mathrm{num}(\delta)|\le c$.
Then $\delta((3/2)^{c}-(3/2)^{a})$ is not an integer; in particular its distance
to the nearest integer is positive.
\end{lemma}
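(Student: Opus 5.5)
The plan is a $2$-adic valuation count: $2$ divides the value's denominator at least $c$ times, and the numerator of $\delta$ is too small to cancel that. Write $\delta=p/r$ in lowest terms with $p\ne0$, $r\ge1$. Put $d:=c-a\ge1$. Clearing the powers of $3/2$ gives
\[
\delta\bigl((3/2)^{c}-(3/2)^{a}\bigr)\;=\;\frac{p\,3^{a}\bigl(3^{d}-2^{d}\bigr)}{r\,2^{c}}.
\]
The factor $3^{a}$ is odd. The factor $3^{d}-2^{d}$ is odd because $d\ge1$ makes $2^{d}$ even. It is also nonzero, so the value is nonzero.

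Suppose the value were an integer $N$. Then
\[
p\,3^{a}\bigl(3^{d}-2^{d}\bigr)=N\,r\,2^{c}.
\]
Comparing $2$-adic valuations, the left-hand side has $v_{2}=v_{2}(p)$, since the other two factors are odd. The right-hand side has $v_{2}\ge c$, because $N\ne0$. Hence $2^{c}\mid p$, and since $p\ne0$ this gives $|p|\ge2^{c}$.

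This contradicts the hypothesis $|\mathrm{num}(\delta)|=|p|\le c$, because $2^{c}>c$ for every $c\ge1$ (and $c\ge1$ holds since $c>a\ge0$). So the value is not an integer. Its distance to the nearest integer is then positive, because a real number at distance $0$ from $\Z$ is an integer.

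There is no real obstacle in this argument. The only point needing care is that the parity of $3^{d}-2^{d}$ requires $d\ge1$, which is exactly the hypothesis $a<c$. Otherwise the argument is a bookkeeping exercise in $v_{2}$. For the Lean formalization I would phrase the key step as an integer divisibility statement, $r\,2^{c}\mid p\,3^{a}(3^{d}-2^{d})$. Then I would cancel the odd factor using coprimality with $2^{c}$, via $\gcd(2^{c},\text{odd})=1$, which avoids valuations altogether.
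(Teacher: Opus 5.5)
Your proof is correct and is essentially the paper's argument: write the value as $\mathrm{num}(\delta)\,3^{a}(3^{c-a}-2^{c-a})/(\mathrm{den}(\delta)\,2^{c})$ with odd cofactor, clear denominators, and use coprimality of $2^{c}$ with that odd factor to force $2^{c}\mid\mathrm{num}(\delta)$, which contradicts $|\mathrm{num}(\delta)|\le c<2^{c}$. The coprimality phrasing you propose for Lean is exactly the one the paper uses. Your remark that $N\ne0$ is harmless but not needed, since $2^{c}$ divides the right-hand side either way.
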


\begin{proof}[Proof]
Write
\[
    (3/2)^{c}-(3/2)^{a}=3^{a}(3^{c-a}-2^{c-a})/2^{c},
\]
whose numerator is odd. If $\delta((3/2)^{c}-(3/2)^{a})=n\in\Z$ then
    \[
\mathrm{num}(\delta)\cdot3^{a}(3^{c-a}-2^{c-a})=n\,2^{c}\,\mathrm{den}
(\delta),
\]
    and coprimality of $2^{c}$ with the odd cofactor forces
$2^{c}\mid\mathrm{num}(\delta)$, hence $c<2^{c}\le|\mathrm{num}(\delta)|$.
\end{proof}

\begin{remark}\label{rem:degen}
At $\delta=1$ one gets $\nrm{(3/2)^{c}-(3/2)^{a}}>0$ for every pair, by
parity alone. For a general multiplier this is false, and not marginally: at
$\delta=4$, $a=1$, $c=2$ one has $4((3/2)^{2}-(3/2))=3$ exactly, an integer, to
which no Diophantine theorem applies. Lemma~\ref{lem:degen} localizes the damage
to the box $c<|\mathrm{num}(\delta)|$, which is finite and is therefore
discarded wholesale rather than by parity. This is the one place where the
multiplier costs a genuine argument.
\end{remark}

\begin{proof}[Proof of Theorem~\ref{thm:kernel}]
Discard the degeneracy box $c<|\mathrm{num}(\delta)|$, finite outright; on the
rest, Lemma~\ref{lem:degen} makes every value a non-integer, so the approximation
theorems apply. Split the remaining set by its gaps $s=c-a$.

If the gaps are bounded, it suffices to treat a fixed gap $s_{0}\ge1$. Then
    \[
        \delta((3/2)^{a+s_{0}}-(3/2)^{a})=\delta_{\mathrm{CZ}}\cdot(3/2)^{a},\quad \text{with}\quad
\delta_{\mathrm{CZ}}=\delta((3/2)^{s_{0}}-1),
\]
    and the Main Theorem of
\cite{CZ04} --- in the form derived from Theorem~\ref{thm:subspace} at $n=2$,
i.e.\ Ridout's theorem applied once --- bounds the $a$ with
    \[
        \nrm{\delta_{\mathrm{CZ}}(3/2)^{a}}<H((3/2)^{a})^{-\varepsilon} \quad\text{for}\quad
\varepsilon=\log\vartheta^{-1}/(2\log3).
\]
    Its size via
$1<|\delta_{\mathrm{CZ}}(3/2)^{a}|$ needs $(3/2)^{a}>2/|\delta|$, so a further
initial segment of $a$ is discarded; this is archimedean and harmless.

If the gaps are unbounded, choose one violator per gap. This is the delicate
selection, because the repaired pair theorem has four hypotheses and the family
must satisfy all of them, not only the visible one. With $\alpha_{1}=\delta$,
$\alpha_{2}=-\delta$ and $(u_{1},u_{2})=((3/2)^{c},(3/2)^{a})$: the archimedean
size condition $|u_{i}|\ge1$ holds because the exponents are positive; the
ratios $u_{1}/u_{2}=(3/2)^{c-a}$ are pairwise distinct --- in both orientations,
as the theorem demands --- because one violator was chosen per gap; the strict
positivity $\nrm{\delta u_{1}-\delta u_{2}}>0$, the hypothesis whose omission
broke \cite{NKR25}'s Theorem~1.3(i) (Remark~\ref{rem:nkr}), is
Lemma~\ref{lem:degen}, available because the degeneracy box was discarded first;
and the height-decay demand $\nrm{\delta u_{1}-\delta u_{2}}<
(H(u_{1})H(u_{2}))^{-\varepsilon_{1}}$ follows from the violator inequality
$\vartheta^{c}$ with $\varepsilon_{1}=\log\vartheta^{-1}/(2\log3)$, since
$H(u_{1})H(u_{2})=3^{a+c}\le3^{2c}$. The pair theorem --- derived from
Theorem~\ref{thm:subspace} at $n=3$ --- then concludes that some $(3/2)^{c}$ is
an integer. This is wrong because $2^{c}(3/2)^{c}=3^{c}$ is odd.
\end{proof}

\begin{theorem}\label{thm:t1arat}
Let $x_{0}\ge1$ and suppose $\Kc$ is rational. Then for every $C\in\N$ there is
an $m\ge1$ with $\cx(w,m)>Cm$; equivalently, $\limsup_{m}\cx(w,m)/m=\infty$.
\end{theorem}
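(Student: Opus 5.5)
Suppose, for contradiction, that $\Kc=\delta\in\Q$ and that some $C\in\N$ has $\cx(w,m)\le Cm$ for every $m\ge1$. The plan is to turn this linear bound, by pigeonhole through Proposition~\ref{prop:residues}, into infinitely many repeated factors whose length is proportional to their position. Proposition~\ref{prop:violator} will convert each repeated factor into a violator in the sense of Definition~\ref{def:viol}. Theorem~\ref{thm:kernel} says there are only finitely many violators, which gives the contradiction. Note that $\delta\ne0$, since $\Kc\ge x_{0}\ge1$.

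\emph{Pigeonhole step.} Fix $m$ and consider the $Cm+1$ positions $n=N_{m},N_{m}+1,\dots,N_{m}+Cm$ for some offset $N_{m}\ge2$ to be chosen. There are at most $Cm$ factors of length $m$, so two of these positions $a<c$ carry the same length-$m$ factor. Then $c\le N_{m}+Cm$ and $k=m$.

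\emph{Transfer step.} Proposition~\ref{prop:violator} gives
\[
\bigl\lVert\delta\bigl((3/2)^{c}-(3/2)^{a}\bigr)\bigr\rVert\le(2/3)^{m}.
\]
To land in $\mathcal V(\delta,\vartheta)$ we need $(2/3)^{m}\le\vartheta^{c}$, which holds as soon as $c\le Lm$ for a fixed $L$ and $\vartheta:=(2/3)^{1/L}$. Since $\vartheta$ must be rational, we take any rational $\vartheta\in[(2/3)^{1/L},1)$ instead. Choosing $N_{m}=m$ gives $c\le(C+1)m$, so $L=C+1$ works. Every $m$ therefore produces a pair $(a_{m},c_{m})\in\mathcal V(\delta,\vartheta)$ with $a_{m}\ge m\ge2$.

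\emph{Conclusion.} Since $a_{m}\ge m\to\infty$, infinitely many distinct pairs arise. This contradicts Theorem~\ref{thm:kernel}, so for each $C$ some $m$ has $\cx(w,m)>Cm$. The restatement $\limsup_{m}\cx(w,m)/m=\infty$ is immediate.

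The only delicate point is keeping the position $c$ linear in $m$ while forcing the pairs to be distinct. Starting the pigeonhole window at $N_{m}=m$ handles both at once. Corollary~\ref{cor:slope} is consistent with this, since it only caps $k$ at about $0.585\,c$, and our pairs have $k=m\ge c/(C+1)$. All the Diophantine difficulty is already contained in Theorem~\ref{thm:kernel}.
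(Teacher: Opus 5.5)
Your proof is correct, and it follows the paper's outline: finiteness of $\mathcal V(\delta,\vartheta)$ from Theorem~\ref{thm:kernel}, a pigeonhole on $Cm+1$ positions, and Proposition~\ref{prop:violator} to place the colliding pair in $\mathcal V(\delta,\vartheta)$ at a rational scale with $\vartheta^{L}\ge 2/3$. The two proofs differ only in how they force the pairs out of the finite violator set.

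The paper pigeonholes in the fixed window $[2,Ck+2]$, so the colliding pair could a priori be shallow. It bounds the second coordinates of $\mathcal V(\delta,\vartheta)$ by $M$. It then uses the growth ceiling of Theorem~\ref{thm:ceiling}, $2^{k+c}<3^{c}(x_{0}+1)$, hence $k\le c+x_{0}$, to show that a repeated factor of length $k=M+x_{0}+1$ cannot have $c\le M$.

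You instead start the window at $N_{m}=m$, so $c>a\ge m$ holds by construction. The only cost is the bound $c\le(C+1)m$ in place of $c\le(C+2)k$, which is still linear, so the scale step is unaffected. Your route therefore needs no $2$-adic input at all, only the kernel, the contraction and pigeonhole. The same shift would also drop the growth ceiling from the reduction in Theorem~\ref{thm:algcond}, which reuses this argument.

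The paper's route does show something extra: repetitions are forced deep by the orbit's growth, not by the choice of window. The statement itself does not need this; the paper itself remarks that all it needs is $k\to\infty\Rightarrow c\to\infty$. If you want the paper's single-length form, fix $m>\max(M,2)$ and you get the contradiction at that one $m$.

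Your appeal to Proposition~\ref{prop:residues} is unnecessary, since the pigeonhole uses only the definition of $\cx(w,m)$.
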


Note: for each $C$ a single length $m$ suffices, and that is all
Cobham's bound needs negated. We do not claim the stronger
$\lim_{m}\cx(w,m)/m=\infty$, which would require every large $m$ to work.

\begin{proof}[Proof]
Write $\Kc=\delta$, nonzero since $\Kc\ge1$. Fix $C$ and pick a rational
$\vartheta\in(0,1)$ with $2/3\le\vartheta^{\,C+2}$; such a $\vartheta$ exists by
Bernoulli's inequality, taking $\vartheta=1-\frac{1/3}{C+2}$ --- the
\emph{Bernoulli scale} for $C$. This trades the
irrational scale $(2/3)^{1/(C+2)}$ for a rational one, which is what the kernel
requires. By Theorem~\ref{thm:kernel} the set $\mathcal V(\delta,\vartheta)$ is
finite, so its second coordinates are bounded by some $M$.

Now suppose $\cx(w,k)\le Ck$ for $k:=M+x_{0}+1$. Pigeonhole produces a repeated
factor: the length-$k$ factors of $w$ take values in the finite set
$\{0,1\}^{k}$, and the $Ck+1$ positions in $[2,Ck+2]$ carry at most $Ck$ distinct
factors, so two of them agree, say at $(a,c)$ with $2\le a<c\le Ck+2\le(C+2)k$.
Proposition~\ref{prop:violator} contracts it to
    \[
\nrm{\delta((3/2)^{c}-(3/2)^{a})}\le(2/3)^{k}\le\vartheta^{(C+2)k}\le
\vartheta^{c},
\]
    so $(a,c)\in\mathcal V(\delta,\vartheta)$ and $c\le M$.

But no repeated factor is that shallow. Theorem~\ref{thm:ceiling} gives
    \[
        2^{k+c}<3^{c}(x_{0}+1)\le4^{c}\cdot2^{x_{0}+1}, \quad\text{using}\  3^{c}\le4^{c}
        \ \text{and}\ x_{0}+1<2^{x_{0}+1},
    \]
        so that $k\le c+x_{0}\le M+x_{0}$, contradicting the
choice of $k$. Hence $\cx(w,k)>Ck$ for this single length $k$. The ceiling used
here is crude, and deliberately so: Corollary~\ref{cor:slope} gives the sharp
slope $0.585$ but only at $x_{0}=1$, whereas $k\le c+x_{0}$ is uniform in
$x_{0}$, which is all the reduction needs, $k\to\infty$ must force
$c\to\infty$.
\end{proof}

\begin{proof}[Proof of Theorem~\ref{thm:dich}]
If $\Kc$ is irrational we are in the second horn. Otherwise $\Kc\in\Q$ and
Theorem~\ref{thm:t1arat} gives the first.
\end{proof}

\begin{remark}\label{rem:horns}
Both horns of Theorem~\ref{thm:dich} are worth having, and neither is provable
alone here. The first would beat Theorem~\ref{thm:dublin}'s record for this word:
no linear function bounds the complexity, against a linear lower bound with a
fixed constant. The second is a conditional irrationality
statement about a constant open since 1977. As a consistency check, note that an
eventually periodic $w$ would make $\Kc$ rational \emph{and} the complexity
bounded, which Theorem~\ref{thm:aper2} excludes; so the dichotomy is sharp on
both sides.
\end{remark}

The kernel has an \emph{effective} slice, and it is worth delimiting exactly ---
both because it is all the effectivity there is, and because its boundary turns
out to be a line this paper has met before.

\begin{proposition}[Liouville floor]\label{prop:efffloor}
Let $\delta\in\Q$, $\delta\ne0$, and $a<c$ with $|\mathrm{num}(\delta)|\le c$.
Then
\[
\bigl\lVert\delta\bigl((3/2)^{c}-(3/2)^{a}\bigr)\bigr\rVert\ \ge\
\frac{1}{\mathrm{den}(\delta)\,2^{c}} .
\]
\end{proposition}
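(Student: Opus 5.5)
The plan is to put the value over a common denominator and then use Lemma~\ref{lem:degen} to rule out the one case that would spoil the bound. Write $\delta=p/r$ with $p=\mathrm{num}(\delta)$, $r=\mathrm{den}(\delta)\ge1$ coprime. Using the factorisation from the proof of Lemma~\ref{lem:degen}, I will express the value as
\[
\delta\bigl((3/2)^{c}-(3/2)^{a}\bigr)=\frac{N}{r\,2^{c}},\qquad N:=p\,3^{a}\bigl(3^{c-a}-2^{c-a}\bigr)\in\Z .
\]
So the value is a rational number with denominator dividing $r2^{c}$.

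For any integer $n$, the difference between the value and $n$ is $(N-nr2^{c})/(r2^{c})$. Its numerator is an integer. If that numerator is nonzero, it has absolute value at least $1$, and then the distance to $n$ is at least $1/(r2^{c})$. Taking the minimum over $n$ gives the claimed bound on $\nrm{\cdot}$, provided no numerator vanishes.

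The key step is therefore to show that $N-nr2^{c}\neq0$ for every integer $n$. A zero numerator would mean the value equals the integer $n$. Lemma~\ref{lem:degen} rules this out under exactly the present hypothesis $|\mathrm{num}(\delta)|\le c$. Its proof shows that the odd cofactor $3^{a}(3^{c-a}-2^{c-a})$ would force $2^{c}\mid p$, and hence $c<2^{c}\le|p|$.

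I expect no real obstacle here. The only point needing care is that $\nrm{x}=\min_{n}|x-n|$ is attained at some integer $n$, so a uniform lower bound over all $n$ suffices. Formally, the nonvanishing is the whole content, and it is borrowed from Lemma~\ref{lem:degen}. The bound is a Liouville-type estimate: a non-integer rational with denominator $D$ lies at distance at least $1/D$ from $\Z$. Here $D=r2^{c}$.
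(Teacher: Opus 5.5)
Your proposal is correct and is essentially the paper's argument. The factor $\mathrm{den}(\delta)\,2^{c}$ clears the value to the integer $N$, Lemma~\ref{lem:degen} (under the same hypothesis $|\mathrm{num}(\delta)|\le c$) rules out the value being an integer, and a non-integral rational whose $D$-fold is integral stays at distance at least $1/D$ from $\Z$; your check that each numerator $N-n\,\mathrm{den}(\delta)\,2^{c}$ is a nonzero integer is exactly that last step.
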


\begin{proof}[Proof sketch]
$\mathrm{den}(\delta)\,2^{c}$ clears the value to an integer, and
Lemma~\ref{lem:degen} says the value is not itself an integer; a non-integral
rational whose $D$-fold is an integer keeps distance at least $1/D$ from $\Z$.
The formal statement is the integer-certificate form
$1\le\mathrm{den}(\delta)\,2^{c}\,\lVert\cdot\rVert$.
\end{proof}

\begin{corollary}[the effective slice]\label{cor:effslice}
Let $\delta\ne0$ and $0\le\vartheta<1/2$. Then $\mathcal V(\delta,\vartheta)$ is
finite \emph{effectively}, with no Diophantine input at all: every
$(a,c)\in\mathcal V(\delta,\vartheta)$ with $c\ge|\mathrm{num}(\delta)|$
satisfies the checkable certificate
$\mathrm{den}(\delta)(2\vartheta)^{c}\ge1$, so that $c<N$ for any
$N\ge|\mathrm{num}(\delta)|$ with $\mathrm{den}(\delta)(2\vartheta)^{N}<1$.
\end{corollary}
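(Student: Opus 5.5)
The plan is to combine the Liouville floor (Proposition~\ref{prop:efffloor}) with the violator inequality directly; no Diophantine theorem is involved. Take $(a,c)\in\mathcal V(\delta,\vartheta)$ with $c\ge|\mathrm{num}(\delta)|$. Then the hypothesis $|\mathrm{num}(\delta)|\le c$ of Proposition~\ref{prop:efffloor} holds, and since $a<c$ is part of membership in $\mathcal V$, the floor gives a lower bound. Membership also gives an upper bound, so
\[
\frac{1}{\mathrm{den}(\delta)\,2^{c}}\ \le\ \bigl\lVert\delta\bigl((3/2)^{c}-(3/2)^{a}\bigr)\bigr\rVert\ \le\ \vartheta^{c}.
\]
Multiplying through by $\mathrm{den}(\delta)2^{c}>0$ yields the certificate $\mathrm{den}(\delta)(2\vartheta)^{c}\ge1$.

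To keep the formal version free of reals, I would use the integer-certificate form $1\le\mathrm{den}(\delta)\,2^{c}\,\lVert\cdot\rVert$ of the floor. The degenerate scale $\vartheta=0$ needs no separate treatment: there the upper bound is $0$, contradicting the floor, which is positive for $c\ge1$.

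For the bound on $c$, suppose $N\ge|\mathrm{num}(\delta)|$ satisfies $\mathrm{den}(\delta)(2\vartheta)^{N}<1$. Since $0\le2\vartheta<1$, the map $n\mapsto(2\vartheta)^{n}$ is non-increasing. Hence every $c\ge N$ also has $\mathrm{den}(\delta)(2\vartheta)^{c}<1$, which violates the certificate. Any violator with $c\ge|\mathrm{num}(\delta)|$ therefore has $c<N$.

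Violators with $c<|\mathrm{num}(\delta)|$ also have $c<N$, because $N\ge|\mathrm{num}(\delta)|$. So all of $\mathcal V(\delta,\vartheta)$ lies in the finite box $2\le a<c<N$.

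Effectivity needs an explicit $N$. Because $2\vartheta<1$, the powers $(2\vartheta)^{n}$ tend to $0$, so such an $N$ exists. It can be found by searching upward from $|\mathrm{num}(\delta)|$ and checking the rational inequality $\mathrm{den}(\delta)(2\vartheta)^{n}<1$ at each step. Alternatively, Bernoulli's inequality applied to $(2\vartheta)^{-1}=1+t$ with $t>0$ gives an explicit bound, namely $N=\max(|\mathrm{num}(\delta)|,\lceil\mathrm{den}(\delta)/t\rceil)$.

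The step I expect to be the only real friction is the monotonicity and existence of $N$ for a rational base below $1$, handled in integer arithmetic. Write $2\vartheta=p/q$ with $p<q$. The certificate then becomes $\mathrm{den}(\delta)p^{c}\ge q^{c}$, and the condition on $N$ becomes $\mathrm{den}(\delta)p^{N}<q^{N}$. Monotonicity in the exponent is then immediate from $p<q$.
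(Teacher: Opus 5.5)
Your proposal is correct and follows the paper's route: set the Liouville floor of Proposition~\ref{prop:efffloor} against the violator bound $\vartheta^{c}$ to get $\mathrm{den}(\delta)(2\vartheta)^{c}\ge1$, then use monotonicity of $(2\vartheta)^{n}$ together with the box $c<|\mathrm{num}(\delta)|$ to conclude $c<N$. One small caveat: your Bernoulli formula for $N$ needs $\vartheta>0$. At $\vartheta=0$ any $N\ge\max(1,|\mathrm{num}(\delta)|)$ works, and your upward search covers that case anyway.
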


\begin{remark}[the effective region is the empty region]\label{rem:effledger}
Three observations delimit Corollary~\ref{cor:effslice}. First, it is uniform in
the gap $c-a$: in the regime $\vartheta<1/2$ nothing is gained by fixing the
gap, and the effective statement covers the whole kernel at once --- there is no
need to isolate a bounded-gap family. Second, the slice cannot be widened by
linear forms in logarithms: the nearest integer is a \emph{free} third term ---
the relevant integer form is
$\mathrm{num}(\delta)\,3^{a}(3^{c-a}-2^{c-a})-n\,\mathrm{den}(\delta)\,2^{c}$
with $n$ unconstrained --- so a two-logarithm bound must absorb $n$ into a
composite algebraic number of height $\asymp c\log(3/2)$, and to beat the floor
of Proposition~\ref{prop:efffloor} its constant would have to be below
$\log2/(\log3\,\log(3/2))\approx1.56$, an order of magnitude beyond anything
proved. (This is the archimedean twin of the familiar fact that Baker-type
bounds lose to trivial estimates on three-term forms.) What lies beyond $1/2$ is
the Pad\'e frontier, and only for the multiplier $1$: Beukers's
$\nrm{(3/2)^{n}}\ge2^{-0.9n}$ for $n\ge5000$ \cite{Beu81}, improved by Zudilin
to $\nrm{(3/2)^{n}}\ge0.5803^{n}$ \cite{Zud07}; no comparably strong effective
bound is known with a general rational multiplier, and beyond $0.5803$ nothing
effective is known at all. Third --- the coherence --- the boundary
$\vartheta=1/2$ is the ledger of \S\ref{sub:ledger} in disguise: the floor
$\asymp2^{-c}$ beats the contraction $(2/3)^{k}$ of
Proposition~\ref{prop:violator} exactly when
$k/c>\log2/\log(3/2)=1.7095\ldots$, the demand line, while actual repeated
factors obey $k/c\le0.585$ (Corollary~\ref{cor:slope}). So the kernel is
effective precisely in the region the word never enters, and the region the
complexity application needs --- $\vartheta\to1$ in the proof of
Theorem~\ref{thm:t1arat} --- is ineffective essentially, not accidentally.
\end{remark}

\subsection{Assembling}\label{sub:compose}

\begin{proof}[Proof of Theorem~\ref{thm:main}]
Suppose $\Kc$ is algebraic. If $\Kc$ is irrational, Theorem~\ref{thm:t1a} gives
that $w$ is not automatic. If $\Kc$ is rational, Theorem~\ref{thm:t1arat} makes
the complexity of $w$ exceed every linear bound, which Cobham's bound
$\cx(a,m)=O(m)$ for automatic $a$ \cite{Cob72} forbids; so $w$ is not automatic
in that case either. The two cases partition the algebraic case. The first form
of the statement is the contrapositive: if $w$ is automatic then $\Kc$ is not
algebraic.
\end{proof}

Theorem~\ref{thm:main} strictly strengthens Corollary~\ref{cor:t1acontra}, whose
irrationality hypothesis was exactly the $\Kc\in\Q$ horn that
\S\ref{sub:kernel} removes.

\subsection{Algebraic multipliers: the Corvaja--Zannier half}\label{sub:algmult}

If the kernel held for algebraic irrational multipliers as it does for rational
ones, Theorem~\ref{thm:main} would upgrade at once to \emph{$\Kc$ algebraic
$\Rightarrow$ superlinear complexity}, single-lane, with the Mahler engine of
\S\ref{sub:af} retired and complexity content in every algebraic case. This
subsection proves the half of that extension which the printed literature
supports, shows the reduction never needed rationality, and thereby compresses
the remaining distance to a single named statement.

For real $\delta$ let $\mathcal V_{\R}(\delta,\vartheta)$ denote the violator
set of Definition~\ref{def:viol} with the distance to the nearest integer taken
in $\R$; for $\delta\in\Q$ it coincides with $\mathcal V(\delta,\vartheta)$.
Two of the three multiplier costs of \S\ref{sub:kernel} now behave
unexpectedly well. Degeneracy \emph{disappears}: for irrational $\delta$ the
value $\delta((3/2)^{c}-(3/2)^{a})$ is an irrational times a nonzero rational,
never an integer, so $\nrm{\cdot}>0$ holds for \emph{every} pair ---
Lemma~\ref{lem:degen}'s box was a rational phenomenon, and the
first casualty of the extension is in fact a simplification. And the
bounded-gap slices survive verbatim, because Theorem~\ref{thm:czalg} is printed
for algebraic $\delta$; what was a parity remark becomes a conjugate argument.

\begin{theorem}[the bounded-gap kernel, algebraic multipliers]\label{thm:algslice}
Let $\delta$ be a real algebraic irrational, let $\vartheta\in(0,1)$ be
rational, and let $S\in\N$. Then only finitely many
$(a,c)\in\mathcal V_{\R}(\delta,\vartheta)$ have gap $c-a\le S$.
\end{theorem}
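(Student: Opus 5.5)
The plan is to reduce to finitely many fixed gaps and close each gap with one application of Theorem~\ref{thm:czalg}, exactly as in the bounded-gap branch of the proof of Theorem~\ref{thm:kernel}. Degeneracy needs no treatment here: as noted before the statement, $\delta$ irrational makes $\delta((3/2)^{c}-(3/2)^{a})$ irrational, so $\nrm{\cdot}>0$ holds for every pair. Since the gaps $s=c-a$ range over the finite set $\{1,\dots,S\}$, it suffices to fix $s$ and show that only finitely many $a$ give $(a,a+s)\in\mathcal V_{\R}(\delta,\vartheta)$.

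For fixed $s$ write
\[
\delta\bigl((3/2)^{a+s}-(3/2)^{a}\bigr)=\delta_{s}\,u_{a},\qquad \delta_{s}:=\delta\bigl((3/2)^{s}-1\bigr),\quad u_{a}:=(3/2)^{a}.
\]
Here $\delta_{s}$ is a nonzero real algebraic irrational, being $\delta$ times a nonzero rational. We apply Theorem~\ref{thm:czalg} with $\Gamma=\langle 3/2\rangle$, multiplier $\delta_{s}$, integer $q=1$, and $u=u_{a}$, so that $d=1$ and $H(u_{a})=3^{a}$. Put $\varepsilon:=\log\vartheta^{-1}/(2\log3)>0$. The hypotheses are checked one at a time:
\begin{itemize}
\item The lower bound $0<\nrm{\delta_{s}u_{a}}$ is the irrationality just noted.
\item For the upper bound, the violator condition gives $\nrm{\delta_{s}u_{a}}\le\vartheta^{a+s}\le\vartheta^{a}=3^{-2\varepsilon a}$. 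This is strictly less than $3^{-\varepsilon a}=H(u_{a})^{-\varepsilon}q^{-d-\varepsilon}$ for $a\ge1$.
\item The size condition $|\delta_{s}u_{a}|>1$ holds once $(3/2)^{a}>1/|\delta_{s}|$. The finitely many smaller $a$ are discarded.
\end{itemize}

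The step I expect to carry the weight is the exclusion of pseudo-Pisot values, which for $\delta=1$ was a parity remark and here becomes a conjugate argument. Since $u_{a}$ is rational, every conjugate of $\delta_{s}u_{a}$ has the form $\sigma(\delta_{s})u_{a}=\sigma(\delta)\bigl((3/2)^{s}-1\bigr)(3/2)^{a}$ for an embedding $\sigma$. Because $\delta$ is irrational, some embedding has $\sigma(\delta)\ne\delta$, and $\sigma(\delta)\ne0$ since $\delta\ne0$. That conjugate is therefore distinct from $\delta_{s}u_{a}$ and has modulus $|\sigma(\delta_{s})|(3/2)^{a}$, which tends to $\infty$. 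Hence for all $a$ beyond an explicit threshold this conjugate has modulus $>1$, and $\delta_{s}u_{a}$ is not pseudo-Pisot, whatever its trace. Discarding the finitely many $a$ below this threshold as well, Theorem~\ref{thm:czalg} leaves only finitely many admissible pairs $(1,u_{a})$, hence finitely many $a$.

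Taking the union over $s\le S$ of these finite sets, together with the finitely many discarded small $a$, proves the theorem. The only points needing care in the formal version are these. First, the conjugates must be taken of $\delta_{s}u_{a}$ inside $\Qbar$, where the degree $d$ in Theorem~\ref{thm:czalg} refers to $u$ and not to $\delta$, so $d=1$ throughout. Second, the strict inequality required by Theorem~\ref{thm:czalg} must be obtained from the non-strict violator bound, which the factor $2$ in $\varepsilon$ provides.
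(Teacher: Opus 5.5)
Your proposal is correct and matches the paper's proof sketch step for step. Both fix the gap, write the value as $\delta\bigl((3/2)^{s}-1\bigr)(3/2)^{a}$, and apply Theorem~\ref{thm:czalg} with $q=1$, $H(u)=3^{a}$ and $\varepsilon=\log\vartheta^{-1}/(2\log3)$; both then discard the size and pseudo-Pisot exceptions as initial segments, the latter because a conjugate $\sigma(\delta_{s})(3/2)^{a}\neq\delta_{s}(3/2)^{a}$ leaves the unit disc, and your explicit chain $\vartheta^{a+s}\le3^{-2\varepsilon a}<3^{-\varepsilon a}$ spells out the strictness step the sketch leaves implicit.
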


\begin{proof}[Proof sketch]
Fix the gap $s_{0}\le S$ and write the value as
$\delta_{\mathrm{CZ}}\,(3/2)^{a}$ with
$\delta_{\mathrm{CZ}}=\delta((3/2)^{s_{0}}-1)$, algebraic irrational. Apply
Theorem~\ref{thm:czalg} with $q=1$, $u=(3/2)^{a}$, $H(u)=3^{a}$,
$\varepsilon=\log\vartheta^{-1}/(2\log3)$. Three clauses are discharged along
the family. The size proviso $1<|\delta_{\mathrm{CZ}}(3/2)^{a}|$ fails only on
an initial segment of $a$ (archimedean). Strict positivity holds for every
pair, by irrationality. The new obligation is the pseudo-Pisot exclusion:
$\delta_{\mathrm{CZ}}$, being algebraic irrational, has a conjugate
$z\ne\delta_{\mathrm{CZ}}$, $z\ne0$; conjugates scale along rational multiples
--- minimal polynomials obey
$p_{r\delta_{\mathrm{CZ}}}(X)=r^{\deg p}\,p_{\delta_{\mathrm{CZ}}}(X/r)$ --- so
$z\,(3/2)^{a}$ is a conjugate of the value other than the value itself, and it
leaves the unit disc once $(3/2)^{a}\ge|z|^{-1}$. Pseudo-Pisot values, like the
size proviso, are confined to an initial segment and discarded.
\end{proof}

The unbounded-gap branch of Theorem~\ref{thm:kernel} used the repaired pair
theorem --- \emph{derived} over $\Q$ from Theorem~\ref{thm:subspace} at
$n=3$. For algebraic coefficients nothing can be cited: \cite{NKR25} is
unrefereed and its Theorem~1.3(i) is false as printed (Remark~\ref{rem:nkr}),
and we know of no refereed substitute. What we can state exactly is the
obligation itself.

\begin{theorem}[conditional upgrade]\label{thm:algcond}
Let $x_{0}\ge1$ and assume the \emph{pair branch} at $\delta=\Kc$: for every
rational $\vartheta\in(0,1)$, every family
$T\subseteq\mathcal V_{\R}(\Kc,\vartheta)$ with pairwise-distinct gaps is
finite. If $\Kc$ is algebraic, then for every $C$ there is an $m\ge1$ with
$\cx(w,m)>Cm$.
\end{theorem}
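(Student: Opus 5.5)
The plan is to split on whether $\Kc$ is rational and, in the irrational case, to rerun the proof of Theorem~\ref{thm:t1arat} almost verbatim, swapping only its Diophantine input. If $\Kc\in\Q$, Theorem~\ref{thm:t1arat} already gives the conclusion. So assume $\Kc=\delta$ is a real algebraic irrational.

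The first step is to establish the kernel at $\delta$: $\mathcal V_{\R}(\delta,\vartheta)$ is finite for every rational $\vartheta\in(0,1)$. Suppose it were infinite.
\begin{itemize}
\item If the gaps $c-a$ of its elements are bounded by some $S$, Theorem~\ref{thm:algslice} says only finitely many elements have gap $\le S$, which is a contradiction.
\item If the gaps are unbounded, choose one violator for each of infinitely many distinct gaps. This gives an infinite family $T\subseteq\mathcal V_{\R}(\Kc,\vartheta)$ with pairwise-distinct gaps, which contradicts the assumed pair branch.
\end{itemize}
Hence there is a bound $M$ on the second coordinates of $\mathcal V_{\R}(\delta,\vartheta)$. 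Unlike in the rational case, no degeneracy box has to be discarded here, since $\nrm{\delta((3/2)^{c}-(3/2)^{a})}>0$ automatically for irrational $\delta$. In any case positivity is not part of the violator condition; it matters only inside the cited inputs.

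The second step is the reduction. Fix $C$ and take the Bernoulli scale $\vartheta=1-\frac{1/3}{C+2}$, which satisfies $2/3\le\vartheta^{C+2}$. Put $k:=M+x_{0}+1$ and suppose $\cx(w,k)\le Ck$.
\begin{enumerate}
\item By pigeonhole on the positions $[2,Ck+2]$, there is a repeated factor of length $k$ at some $(a,c)$ with $2\le a<c\le(C+2)k$.
\item By the unconditional contraction (Corollary~\ref{cor:contract}), $|\theta_{c}-\theta_{a}|\le(2/3)^{k}$.
\item Since $\theta_{n}=\Kc(3/2)^{n}-x_{n}$, we have $\delta((3/2)^{c}-(3/2)^{a})-(x_{c}-x_{a})=\theta_{c}-\theta_{a}$, so $\nrm{\delta((3/2)^{c}-(3/2)^{a})}\le(2/3)^{k}\le\vartheta^{(C+2)k}\le\vartheta^{c}$. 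Thus $(a,c)\in\mathcal V_{\R}(\delta,\vartheta)$ and $c\le M$.
\item The ceiling of Theorem~\ref{thm:ceiling}, in its crude form, gives $k\le c+x_{0}\le M+x_{0}$, contradicting the choice of $k$.
\end{enumerate}
Therefore $\cx(w,k)>Ck$.

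The main obstacle is checking that nothing in the reduction secretly used $\delta\in\Q$. In Proposition~\ref{prop:violator}, rationality was invoked only to place the value in $\Q$. The identity relating $\theta$ to the value holds for any real $\Kc$, by Theorem~\ref{thm:cf} and Corollary~\ref{cor:contract}. So the only point that needs care is that $\mathcal V_{\R}$ is defined with the real distance to the nearest integer, which is exactly what the contraction bounds.
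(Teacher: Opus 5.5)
Your proposal is correct and follows the paper's proof essentially step for step. You handle rational $\Kc$ by Theorem~\ref{thm:t1arat}. For algebraic irrational $\Kc$ you get finiteness of $\mathcal V_{\R}(\Kc,\vartheta)$ from the gap dichotomy, with Theorem~\ref{thm:algslice} for bounded gaps, the assumed pair branch after one violator per gap for unbounded gaps, and no degeneracy box, and you then rerun the proof of Theorem~\ref{thm:t1arat} over $\R$ with Corollary~\ref{cor:contract} in place of Proposition~\ref{prop:violator}.
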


\begin{proof}[Proof sketch]
If $\Kc$ is rational this is Theorem~\ref{thm:t1arat}, and the hypothesis is
not consulted. If $\Kc$ is algebraic irrational, the gap dichotomy behind
Theorem~\ref{thm:kernel} reassembles: bounded gaps fall to
Theorem~\ref{thm:algslice}, unbounded gaps to the assumed pair branch after
selecting one violator per gap, and there is no degeneracy box to discard ---
so $\mathcal V_{\R}(\Kc,\vartheta)$ is finite. Everything in the proof of
Theorem~\ref{thm:t1arat} after that finiteness then runs verbatim over $\R$:
pigeonhole, the Bernoulli scale, the contraction --- Corollary~\ref{cor:contract},
which never needed rationality --- and the growth ceiling.
\end{proof}

The pair branch is a theorem for rational $\delta$ --- supporting the proof of
Theorem~\ref{thm:kernel} ---, and for algebraic irrational $\delta$
it is a Subspace argument over the number field $\Q(\delta)$: places above
$2$, $3$, $\infty$, relative heights, $S$-enlargement at the places where
$\delta$ is not a unit. Theorem~\ref{thm:subspace} is already stated over an
arbitrary number field, so the assumption side is ready; missing is the distance between
Theorem~\ref{thm:algcond} and an unconditional upgrade.

What the proven half buys \emph{unconditionally} is a repetition statement.
Call a pair $(a,c)$ with $2\le a<c\le a+S$ a \emph{close repetition at slope
$1/L$} if the factors of $w$ of length $\lfloor c/L\rfloor+1$ at $a$ and at
$c$ coincide --- a period-$(c-a)$ stretch of length proportional to its
position, at bounded period. By the ceiling (Theorem~\ref{thm:ceiling};
$2^{2c+1}<3^{c}(x_{0}+1)$ fails for large $c$) the class is empty for $L=1$
and $c$ large, and Corollary~\ref{cor:slope}'s slope $0.585$ makes $L=2$ the
widest interesting one.

\begin{corollary}\label{cor:closerep}
Let $x_{0}\ge1$ and suppose $\Kc$ is algebraic. Then for every gap bound $S$
and every slope $1/L$, $L\ge1$, only finitely many close repetitions occur in
$w$. Contrapositively: infinitely many close repetitions at a single $(S,L)$
force $\Kc$ to be transcendental.
\end{corollary}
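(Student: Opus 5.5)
The plan is to turn a close repetition into a violator at a fixed rational scale $\vartheta$ with bounded gap, and then quote finiteness: Theorem~\ref{thm:kernel} when $\Kc$ is rational, Theorem~\ref{thm:algslice} when $\Kc$ is algebraic irrational. Fix $S$ and $L\ge1$, and consider a close repetition $(a,c)$, so $2\le a<c\le a+S$. Its factors of length $k:=\lfloor c/L\rfloor+1$ at $a$ and at $c$ coincide.

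First I would apply the unconditional contraction, Corollary~\ref{cor:contract}. It gives $|\theta_{c}-\theta_{a}|\le(2/3)^{k}$, and rearranging $\theta_{n}=\Kc(3/2)^{n}-x_{n}$ as in the proof of Proposition~\ref{prop:violator} yields
\[
\bigl\lVert\Kc\bigl((3/2)^{c}-(3/2)^{a}\bigr)\bigr\rVert\le(2/3)^{k}\le(2/3)^{c/L}.
\]
The last step only uses $k\ge c/L$. That rearrangement never uses rationality, since the value sits within $(2/3)^{k}$ of the integer $x_{c}-x_{a}$ in $\R$ as well.

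Next I need a rational $\vartheta\in(0,1)$ with $(2/3)^{1/L}\le\vartheta$. The Bernoulli scale of Theorem~\ref{thm:t1arat} works: $\vartheta=1-\frac{1/3}{L}$ satisfies $\vartheta^{L}\ge 1-\tfrac13=\tfrac23$. Then $(2/3)^{c/L}\le\vartheta^{c}$, so every close repetition at $(S,L)$ lies in $\mathcal V_{\R}(\Kc,\vartheta)$ and has gap at most $S$.

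Then I split on the nature of $\Kc$, which is nonzero because $\Kc\ge x_{0}\ge1$. If $\Kc$ is rational, $\mathcal V_{\R}(\Kc,\vartheta)=\mathcal V(\Kc,\vartheta)$, and this set is finite outright by Theorem~\ref{thm:kernel}, with no need to use the gap bound. If $\Kc$ is algebraic irrational, it is a real algebraic irrational, and Theorem~\ref{thm:algslice} with this $\vartheta$ and $S$ says that only finitely many violators have gap at most $S$. In both cases the set of close repetitions is finite. The contrapositive form follows immediately.

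The only point needing care is the scale conversion: the violator inequality is stated against $\vartheta^{c}$, while the contraction gives $(2/3)^{k}$ with $k$ tied to $c$ through the floor. The bound $k\ge c/L$ handles this. The real content, and the axiom footprint, sits in the two cited finiteness results, consistent with Table~\ref{tab:dep} listing [Sub] and [CZ04a] for this corollary.
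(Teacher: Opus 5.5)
Your proposal is correct and follows the paper's proof essentially step for step. You turn a close repetition into a violator of gap at most $S$ at the Bernoulli scale $\vartheta$ with $2/3\le\vartheta^{L}$ via Corollary~\ref{cor:contract}, spelling out the chain $(2/3)^{\lfloor c/L\rfloor+1}\le(2/3)^{c/L}\le\vartheta^{c}$ that the paper compresses into one line, and then you close the algebraic irrational case with Theorem~\ref{thm:algslice} and the rational case with Theorem~\ref{thm:kernel}, exactly as the paper does.
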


\begin{proof}[Proof sketch]
A close repetition is a violator of gap $\le S$ at the Bernoulli scale
$\vartheta(L)$ with $2/3\le\vartheta^{L}$: Corollary~\ref{cor:contract} gives
$\nrm{\Kc((3/2)^{c}-(3/2)^{a})}\le(2/3)^{\lfloor c/L\rfloor+1}
\le\vartheta^{c}$. For algebraic irrational $\Kc$,
Theorem~\ref{thm:algslice} finishes; for rational $\Kc$,
Theorem~\ref{thm:kernel} does.
\end{proof}

\begin{remark}[scope and lanes]\label{rem:algscope}
Under its algebraicity hypothesis, Corollary~\ref{cor:closerep} strictly
strengthens what Theorem~\ref{thm:aper2} excludes: eventual periodicity would
be one infinite family of close repetitions, while the corollary bounds every
bounded-period family of proportional-length stretches. (The two are not
comparable outright --- Theorem~\ref{thm:aper2} is unconditional.) It is also
the first repetition-structure statement available for algebraic
\emph{irrational} $\Kc$, where Theorem~\ref{thm:main} gave non-automaticity
through the Mahler lane and nothing quantitative.
Remark~\ref{rem:scope} still applies unchanged: $\Kc$ is expected transcendental,
    the trigger is expected to fail (the residue data of \S\ref{sub:outlook}
    leaves no room for close repetitions) and the content is the criterion
direction.
\end{remark}

\section{Two complements}\label{sec:comp}

\subsection{The \texorpdfstring{$2$}{2}-adic value of the word}\label{sub:padic}

Proposition~\ref{prop:partial} is an identity between rationals, and so may be
read at any place. Over $\R$ its limit is $3(\Kc-x_{0})$, an unknown real. Over
$\Qtwo$ the limit is an integer.

\begin{proposition}\label{prop:padic}
In $\Qtwo$ one has $\;\lVert 2/3\rVert_{2}=1/2$, the series $\sum_{j}w_{j}
(2/3)^{j}$ converges, and
\[
\sum_{j\ge0}w_{j}\Bigl(\tfrac23\Bigr)^{j}=-3x_{0} .
\]
More generally $\sum_{j\ge0}w_{n+j}(2/3)^{j}=-3x_{n}$ for every $n$.
\end{proposition}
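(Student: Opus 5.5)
The plan is to read Proposition~\ref{prop:partial} (equivalently Proposition~\ref{prop:circuit}) $2$-adically and pass to the limit. First, $\lVert 2/3\rVert_{2}=|2|_{2}/|3|_{2}=1/2$, since $3$ is a $2$-adic unit. Hence $|w_{j}(2/3)^{j}|_{2}\le 2^{-j}\to0$. In a non-archimedean complete field a series converges as soon as its terms tend to $0$, so $\sum_{j}w_{j}(2/3)^{j}$ converges in $\Qtwo$.

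Next, multiply the identity of Proposition~\ref{prop:partial} by $3$ and rearrange it as an identity in $\Q\subset\Qtwo$:
\[
\sum_{j<n}w_{j}\Bigl(\tfrac23\Bigr)^{j}=3x_{n}\Bigl(\tfrac23\Bigr)^{n}-3x_{0}.
\]
The left side is the $n$-th partial sum of the series. On the right, $x_{n}\in\Z$, so $|x_{n}|_{2}\le1$, and $|3x_{n}(2/3)^{n}|_{2}\le 2^{-n}\to0$. Letting $n\to\infty$ in $\Qtwo$ gives $\sum_{j\ge0}w_{j}(2/3)^{j}=-3x_{0}$. This is the crux of the argument: archimedeanly $x_{n}(2/3)^{n}$ tends to $\Kc$, but $2$-adically it tends to $0$, because the integers $x_{n}$ are $2$-adically bounded while $(2/3)^{n}$ is $2$-adically small. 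No growth estimate on $x_{n}$ is needed; integrality alone suffices.

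For the shifted statement, I would use shift invariance as in the proof of Theorem~\ref{thm:cf}. The orbit started at $x_{n}$ is $(x_{n+j})_{j}$, and its word is $(w_{n+j})_{j}$, because the recursion and $w=x\bmod2$ are the same. Applying the case just proved to the initial value $x_{n}$ gives $\sum_{j}w_{n+j}(2/3)^{j}=-3x_{n}$. That case did not use $x_{0}\ge1$, and nothing in it needs positivity. Alternatively, one can redo the telescoping directly from Proposition~\ref{prop:circuit} at $a=n$: it gives $\sum_{i<k}w_{n+i}(2/3)^{i}=3\bigl((2/3)^{k}x_{n+k}-x_{n}\bigr)$, and the same limit applies.

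There is no real obstacle here. The one point needing care is formal: the identity must hold in $\Q$ before it is viewed in $\Qtwo$, so that it can be transported across places. Proposition~\ref{prop:partial} is already a rational identity, so this is immediate.
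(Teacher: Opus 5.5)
Your proposal is correct and is essentially the paper's proof: read the identity of Proposition~\ref{prop:partial} in $\Qtwo$, kill the orbit term by $\lVert 3x_{n}(2/3)^{n}\rVert_{2}\le 2^{-n}$ using only that $x_{n}$ is an integer, and get the shifted identity by shift-invariance of the word. The differences are cosmetic and both are fine: you transport the rational identity into $\Qtwo$ rather than rerunning the induction there, and you get convergence from the terms tending to $0$ rather than from the partial sums having a limit.
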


\begin{proof}[Proof sketch]
The induction proving Proposition~\ref{prop:partial} uses only that $2$ and $3$
are invertible, so it runs verbatim over $\Qtwo$. Only the limit differs:
\[
\lVert x_{n}(2/3)^{n}\rVert_{2}=2^{-n}\lVert x_{n}\rVert_{2}\le2^{-n}
\longrightarrow0 ,
\]
since $x_{n}$ is a $2$-adic integer. So the orbit term vanishes and the partial
sums $3x_{n}(2/3)^{n}-3x_{0}$ tend to $-3x_{0}$. The shifted family follows by
shift-invariance of the word.
\end{proof}

\subsection{The exponent ledger}\label{sub:ledger}

A recurring proposal for this word is to periodize the digits at a repeated factor,
producing $S$-unit approximants,
and to feed those to the $p$-adic Subspace Theorem, as Adamczewski and Bugeaud
\cite{AB07} did for Loxton--van der Poorten. The proposal cannot work, and the
reason is structural.

The two constants $\log_{2}(3/2)<1<1/\log_{2}(3/2)$ are exactly the two sides of the ledger.
Corollary~\ref{cor:slope} caps the \emph{quality} $t/c$ of any repeated factor of
$w$ by $\log_{2}(3/2)=0.5849\ldots$, while periodizing at a repeated factor
yields approximants whose per-place product over $\{\infty,2,3\}$ needs
\[
t/c\ >\ (1-\lambda)/\lambda\ =\ 1.7095\ldots,\qquad
\lambda=\log(3/2)/\log3 .
\]
Since the two are reciprocal and straddle $1$, no sharpening can close the gap:
the very mechanism that makes the word complex --- the $2$-adic rigidity of
Theorem~\ref{thm:rigid}, which charges $2^{t}\mid x_{c}-x_{a}$ for a repeated
factor of length $t$ while the orbit grows only like $(3/2)^{c}$ --- is the
mechanism that caps repeated-factor quality below $1$.

\begin{proposition}[log-free form]\label{prop:notdemand}
Let $x_{0}=1$ and $a<c$. No repeated factor of length $t$ at $(a,c)$ satisfies
$17095\,c<10000\,t$.
\end{proposition}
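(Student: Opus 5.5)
We argue by contradiction from Corollary~\ref{cor:slope}. Suppose a repeated factor of length $t$ at $(a,c)$, with $x_{0}=1$ and $a<c$, satisfies $17095\,c<10000\,t$. Corollary~\ref{cor:slope} gives the integer inequality $41t\le24c+40$. The plan is to play these two linear inequalities against each other using only integer arithmetic, which is the log-free form the statement is named for.

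First multiply the hypothesis by $41$ and the corollary by $10000$:
\[
41\cdot17095\,c\ <\ 410000\,t\ \le\ 240000\,c+400000 .
\]
Since $41\cdot17095=700895$, this leaves $460895\,c<400000$. Hence $c=0$. That is impossible, because $c>a\ge0$ forces $c\ge1$.

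Both inequalities are homogeneous up to the additive constant $40$ in the corollary. The slopes are $24/41\approx0.585$ and $10000/17095\approx0.585$ inverted: the hypothesis requires $t/c>1.7095$, while the corollary allows only $t/c\le 24/41+40/(41c)$. So the only possible obstruction is at small $c$, and the computation above shows that even $c=1$ fails, since $460895>400000$.

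There is no real obstacle here; the only step needing care is checking that the constant term does not permit a small-$c$ exception, which the display settles. The certificate $3^{41}\le2^{65}$ is already built into Corollary~\ref{cor:slope}, so no logarithms or reals enter.
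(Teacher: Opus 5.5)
Your proof is correct and is the paper's argument: combining the hypothesis with $41t\le 24c+40$ from Corollary~\ref{cor:slope} gives $460895\,c<400000$, which is incompatible with $c\ge1$. Your remark that $10000/17095$ is the slope ``inverted'' is muddled but harmless, since the displayed computation alone carries the proof.
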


\begin{proof}[Proof sketch]
Immediate from $41t\le24c+40$ (Corollary~\ref{cor:slope}); the two are
incompatible already for $c\ge1$.
\end{proof}

\begin{definition}\label{def:index}
For $\tau,\varepsilon>0$ and sequences $c,t\colon\N\to\N$, say $(c_{m},t_{m})$
satisfies the \emph{index condition} at $(\tau,\varepsilon)$ if
\[
(\tau+\varepsilon)\,c_{m}\log3\ \le\ t_{m}\log2
\qquad\text{for infinitely many }m .
\]
\end{definition}

This is what an application of the $p$-adic Subspace Theorem in the style of
\cite{AB07} needs from a family of $S$-unit approximants of height
$\asymp3^{c_{m}}$ and $2$-adic quality $2^{-t_{m}}$: the per-place product over
$\{\infty,2,3\}$ must beat the height to the power $-\tau-\varepsilon$, for some
$\tau>1$, infinitely often. Reading a repeated factor of length $t_{m}$ at
position $c_{m}$ as the source of such an approximant (periodize the digits
there) is the one interpretive element of this
subsection, and the theorem below is a statement about
Definition~\ref{def:index} alone.

\begin{theorem}\label{thm:noindex}
Let $(a_{m},c_{m},t_{m})$ be any family of repeated factors of $w$ (at
$x_{0}=1$) with $a_{m}<c_{m}$ and $c_{m}\to\infty$. Then $(c_{m},t_{m})$ fails
the index condition at $(\tau,\varepsilon)$, for every $\tau>1$ and every
$\varepsilon>0$.
\end{theorem}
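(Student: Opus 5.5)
The plan is to show that the index condition can hold for \emph{no} index $m$ with $c_{m}\ge1$. Failing it for infinitely many $m$ is then automatic, so the hypothesis $c_{m}\to\infty$ is used only to discard finitely many initial terms. The engine is the repeated-factor ceiling, Corollary~\ref{cor:slope}: at $x_{0}=1$ every repeated factor of length $t$ at $(a,c)$ with $a<c$ obeys $41t\le24c+40$. I will set this against the index inequality $(\tau+\varepsilon)c\log3\le t\log2$. Since $\tau>1$ and $\varepsilon>0$, that inequality implies the weaker $c\log3<t\log2$, that is, $3^{c}<2^{t}$, and it suffices to refute this weaker statement for large $c$.

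To stay in the log-free, integer-certificate style of the paper, I will work with integer exponents. Suppose $3^{c}<2^{t}$. Raise it to the $41$st power to get $3^{41c}<2^{41t}\le2^{24c+40}$. Now I need a lower bound for $3^{41c}$ by a power of $2$ that beats $2^{24c+40}$. Already $3\ge2^{1}$ gives $3^{41c}\ge2^{41c}$, so $41c<24c+40$, i.e.\ $17c<40$, forcing $c\le2$. So for $c\ge3$ there is a contradiction. Alternatively, I can avoid the exponentiation entirely: $3^{c}<2^{t}$ with $3^{c}\ge2^{c}$ gives $t>c$, and then $41c<41t\le24c+40$ again yields $c\le2$. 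In either form the only input is Corollary~\ref{cor:slope} plus the trivial $2<3$, mirroring the ``reciprocals straddling $1$'' remark: the ceiling slope $0.585$ is below $1$, while the demand slope $(\tau+\varepsilon)\log_{2}3$ exceeds $\log_{2}3>1$.

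The final step passes from the pointwise statement to the family. Since $c_{m}\to\infty$, only finitely many $m$ have $c_{m}\le2$, so the index inequality holds for at most finitely many $m$. The index condition therefore fails at every $(\tau,\varepsilon)$ with $\tau>1$ and $\varepsilon>0$. In fact $\tau>0$ would suffice, given the margin.

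The main obstacle is not mathematical but formal: converting the real-logarithm inequality of Definition~\ref{def:index} into the integer comparison $3^{c}<2^{t}$. That requires monotonicity of $\exp$/$\log$, positivity of $\log 2$ and $\log 3$, and $(\tau+\varepsilon)\ge1$. I would first derive $c\log3\le(\tau+\varepsilon)c\log3\le t\log2$, then exponentiate to $3^{c}\le2^{t}$ as naturals. Equality is impossible by parity when $c\ge1$, so strictness comes for free and the argument above goes through.
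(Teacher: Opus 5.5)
Your proposal is correct and follows the paper's own argument: Corollary~\ref{cor:slope} gives $t_{m}\log2<c_{m}\log3$ once $c_{m}\ge3$, the factor $\tau+\varepsilon>1$ only makes the demand harder, and $c_{m}\to\infty$ then makes the index inequality fail for all but finitely many $m$. One caution on your aside: your argument does not show that ``$\tau>0$ would suffice,'' because the step to $3^{c}<2^{t}$ uses $\tau+\varepsilon\ge1$, and the ceiling $41t\le24c+40$ contradicts the index inequality for large $c$ only when $(\tau+\varepsilon)\log_{2}3>24/41$, that is, for $\tau+\varepsilon$ above roughly $0.37$.
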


\begin{proof}[Proof sketch]
Corollary~\ref{cor:slope} gives $t_{m}\log2<c_{m}\log3$ outright once
$c_{m}\ge3$, before $\tau$ is used at all; and $\tau+\varepsilon>1$ only makes
the demand worse. So the condition fails \emph{eventually}, which contradicts its
``infinitely often'' shape.
\end{proof}

Theorem~\ref{thm:noindex} takes the repeated-factor family as given and does not
rebuild the approximant construction: the point is that no construction can help,
whatever its details, because the exponents do not fit.

\subsection{Computations}\label{sub:outlook}

This subsection is not formally verified; it records computations and the
places where the machinery above visibly bites next. It is the only part of
the paper outside the scope of Appendix~\ref{app:lean}.

Proposition~\ref{prop:residues} makes the complexity of $w$ cheaply computable:
$\cx(w,m)$ is the number of residues modulo $2^{m}$ met by the orbit, and a
prefix of the orbit witnesses a lower bound. The result of running the first
$10^{6}$ orbit points (at $x_{0}=1$) is stark: \emph{every} residue class
modulo $2^{m}$ is met, for every $m\le16$ --- that is, the witnessed counts are
not merely superlinear but maximal, $\cx(w,m)=2^{m}$ as far as the computation
can see. (For $17\le m\le20$ the prefix is still producing new residues at its
end, so those counts are not yet saturated; nothing suggests they stop short.)
Moreover the \emph{rate} of saturation is the generic one: the position of the
last new residue --- $1471$ for $m=8$, $39346$ for $m=12$, $821306$ for $m=16$
--- matches the coupon-collector time $2^{m}(m\log2+\gamma)\approx765000$ at
$m=16$ of a uniformly random residue sequence. The orbit of $U_{3/2}$ behaves,
$2$-adically, like a random sequence. On that model full complexity is forced
--- and indeed Dubickas already conjectures it at $x_{0}=1$
\cite[p.~246]{Dub09}, judging it ``very likely as difficult as'' the
corresponding conjecture $\cx(\alpha,n)=q^{n}$ for algebraic irrationals in
integer bases. The computation above appears to be the first direct evidence,
and supports the conjecture in the generality of this paper.

\begin{conjecture}[Dubickas \cite{Dub09} at $x_{0}=1$]\label{conj:full}
For every $x_{0}\ge1$ and every $m$, $\;\cx(w,m)=2^{m}$: every binary word is a
factor of $w$.
\end{conjecture}

Two companion computations point the same way. The repeated-factor quality
$t/c$ (in the notation of \S\ref{sub:ledger}) over all pairs $a<c\le3000$ is
maximized at the early pair $(a,c,t)=(1,6,4)$ with $t/c=2/3$ --- an artifact of
the additive constant in Corollary~\ref{cor:slope} --- and restricted to
$c\ge100$ its maximum is $0.1089$, decaying with $c$: the orbit sits far below
the $0.585$ ceiling, i.e.\ the word is empirically very far from
\emph{stammering} in the sense of \cite{AB07} --- from carrying the long
near-repetitions that a Subspace attack would have to feed on.
And the truncated kernels are as large as they can be: comparing kernel
elements on their first $64$ terms, the $2$-kernel has $2^{i}$ distinct
elements at every depth $i\le12$ and the $3$-kernel $3^{i}$ at every depth
$i\le8$ --- no collapse anywhere, as non-automaticity predicts.

\section{Acknowledgements}
The author utilized Claude Code as an AI coding assistant to aid in the Lean 4
formalization of the proofs presented in this paper. The author directed and
reviewed all generated code and takes full responsibility for the mathematical
integrity and final content of the work.

\appendix
\section{Formalizing Theorem~\ref{thm:af}}\label{app:af}

Theorem~\ref{thm:af} was a cited axiom of this development until August 2026.
Removing it is the one piece of the formalization that produced findings about
the literature rather than about the word $w$, and this appendix records them.
Nothing below strengthens a statement in the body of the paper; what changed is
the trust surface, from one sequence-level black box to the two named lemmas of
\S\ref{sub:axioms}. The development is twenty-nine files and about
$11\,900$ lines devoted to \cite{AF17,AF22}, together with about $2\,700$ lines
of Mahler-system machinery on the automatic-sequence side and about $1\,500$
lines of general-purpose material (heights of tuples, regular field extensions,
rationality of power series) written for the library rather than for this proof.

\subsection*{What is proved, exactly}

The Lean statement is not the specialization of Theorem~\ref{thm:af} but a
general one: for $k$ any field of algebraic numbers, $f_{1},\dots,f_{n}\in
k[[z]]$ summing on a disc of radius $r\le1$ to functions solving a $q$-Mahler
system with polynomial matrix of nonzero determinant, and $\alpha\in k$ with
$0<|\alpha|<r$, the value $f_{i}(\alpha)$ is transcendental over $k$ or lies in
$k$. \cite[Th\'eor\`eme 1.7 (i)]{AF17} --- linear dependence over $\Qbar$
descends to linear dependence over $k$, with the support of the relation
shrinking or staying put --- is proved along the way, and is what makes the
corollary a two-line consequence.

Two differences from the printed corollary are worth naming. The field $k$ is
\emph{not} required to be a number field: the descent of \cite[\S4]{AF17} spends
one $k$-linear functional and no finiteness anywhere. Conversely, $\alpha$ is
asked to lie inside the disc of convergence, rather than merely not to be a pole
of the meromorphic continuation; recovering the printed hypothesis is exactly
what the analytic desingularization of \cite[\S5]{AF17} is for, and it is the one
thing here that is deliberately not formalized. For Theorem~\ref{thm:af} this
costs nothing, since bounded coefficients give $r=1$ and $\alpha=2/3$.

Three steps that a citation of \cite[Cor.~1.8]{AF17} would have taken on its
authority are proved instead. That an automatic sequence satisfies a $q$-Mahler
system is elementary and was already available; what is \emph{not} available from
the literature is the same system with a \emph{polynomial} matrix of nonzero
determinant --- \cite[Thm.~1]{Bec94} gives an invertible matrix over $\Qbar(z)$,
not over $\Qbar[z]$ --- and that gap is closed by an elementary rewriting of the
system rather than by a citation. The other two, that bounded coefficients force
radius of convergence $\ge1$ and hence that $|\alpha|<1$ is not a pole, and that
$k=\Q$ suffices when $\alpha$ and the coefficients are rational, are routine.

\subsection*{Where the printed proof is longer than it needs to be}

\emph{(i)} \cite[\S5]{AF17} is avoidable: the authors give two proofs of their
Th\'eor\`eme 1.7 (i), and the first, in \S4, reaches it from three elementary
lemmas and the degree-one lifting theorem, with no analytic desingularization.

\emph{(ii)} The upper-bound step of \cite[\S2.3]{AF22} --- three pages of
Taylor-coefficient bookkeeping: Cauchy--Hadamard, a binomial re-expansion at
$\alpha^{q^{k_{0}}}$, a separate estimate for the coefficients of the auxiliary
matrix, a convolution --- is not needed. The quantity being bounded is a single
value, so the maximum modulus principle delivers it from a supremum on one fixed
circle, and the whole of their Lemma 2.13 collapses to ``a power series with a
zero of order $p$ is small at a small argument''. This was the step the project
had budgeted as its largest risk.

\emph{(iii)} \cite[Lemma 2.2]{AF22} --- the eventual linearity in $\delta_{2}$ of
a dimension $d(\delta_{1},\delta_{2})$, not to be confused with
Theorem~\ref{thm:af17reg} above --- is proved in their Appendix A.1 by clearing
denominators in a basis of an annihilator and counting linear forms in a dual
space. None of that is needed: the dimensions are those of a filtration
generated by iterating multiplication by $z$, and a five-line monotonicity gives
\emph{exact} eventual linearity, which is stronger than what is asked for.

\emph{(iv)} The Puiseux field of \cite{AF22} is a container, not a tool. The word
occurs exactly twice in that paper, both times in the definition of the ambient
field; no valuation, order of vanishing, or fractional exponent appears anywhere
in \S2. Any algebraically closed extension of $\Qbar(z)$ in which the relation
matrix is algebraic serves the same purpose.

\emph{(v)} No shrinking discs, and no matrix inverse. \cite[Rem.~2.10]{AF22}
shrinks the disc from step to step because $A(z^{q^{j}})$ acquires poles; a
polynomial Mahler matrix has none, and every system here is polynomial. And the
normalizing matrix $a=A_{k_{0}}(\alpha)\varphi(\xi)^{-1}$ --- where
$\xi:=\alpha^{q^{k_{0}}}$ and $A_{k}(z):=A(z)A(z^{q})\cdots A(z^{q^{k-1}})$ is
the iterated Mahler matrix --- is used only through the identity
$a\varphi(\xi)=A_{k_{0}}(\alpha)$, so no inverse is ever formed.

\emph{(vi)} The primitive element of \cite[\S2.4]{AF22} can be dispensed with
altogether, and this is also the fix for \emph{(viii)} below. The coefficients of
the relation generate a finitely generated torsion-free $\Qbar[z]$-module, hence
a free one; a basis expresses each of them with \emph{polynomial} coordinates, so
there is no denominator to clear and no condition on $\alpha$ to arrange.

\subsection*{One false claim, and one impossible order}

\emph{(vii)} \cite{AF22} justify the equality of two dimensions in the
auxiliary-function step by ``$P(Y,z)\mapsto P(MY,z)$ is an automorphism of
$\Qbar[Y,z]_{\delta_{1},\delta_{2}}$''. The two dimensions are those of
$\mathcal{I}(\delta_{1},\delta_{2})$ and of $\mathcal{J}(\delta_{1},\delta_{2})$,
the bidegree-bounded pieces of their ideal $\mathcal{I}\subset\Qbar(z)[Y]$ of
relations and of the second ideal $\mathcal{J}:=\{P:P(MY,z)\in\mathcal{I}\}$ they
introduce for the twist. The quoted map is not an automorphism. Their
$\delta_{1}$ bounds the degree in \emph{each} matrix variable $y_{ij}$
separately, and a linear substitution does not preserve that: already for $m=2$,
$y_{11}y_{21}\mapsto(M_{11}y_{11}+M_{12}y_{21})(M_{21}y_{11}+M_{22}y_{21})$ has
degree $2$ in $y_{11}$.

What belongs in its place is an inclusion rather than an automorphism, and it
asks nothing of $M$:
\[
  \text{for every }M\in\mathrm{M}_{m}(\Qbar),\qquad
  P\in\Qbar[Y,z]_{\delta_{1},\delta_{2}}
  \ \Longrightarrow\
  P(MY,z)\in\Qbar[Y,z]_{m^{2}\delta_{1},\delta_{2}}.
\]
Two invariants prove it, and they are what a formalization has to supply in
place of the printed sentence. The substitution sends each $y_{ij}$ to a linear
form, so it does not raise the \emph{total} degree in $Y$; and its coefficients
are constants, so it keeps every coefficient inside the $\Qbar$-span of
$1,z,\dots,z^{\delta_{2}}$. The factor $m^{2}$ is the price of a round trip
between the two notions of degree: all $m^{2}$ separate degrees $\le\delta_{1}$
force total degree $\le m^{2}\delta_{1}$, and total degree $\le m^{2}\delta_{1}$
forces each separate degree $\le m^{2}\delta_{1}$ again. (It could be sharpened
to $m$, since $Y\mapsto MY$ acts within each column of $Y$, so only the $m$
variables in the column of $y_{ij}$ can raise its degree; nothing below needs
the sharper form.)

The dimension count is then repaired with no further change, because inflating
the first index is cheap: iterating their Lemma 2.3,
$d(2\delta_{1},\delta_{2})\le2^{m^{2}}d(\delta_{1},\delta_{2})$, shows that a
factor $c$ in the first index costs at most $(2^{m^{2}})^{\lceil\log_{2}c\rceil}$.
The index the auxiliary-function step actually needs is $(m^{2}+1)\delta_{1}$ ---
the $m^{2}\delta_{1}$ of the inclusion above, plus the $\delta_{1}$ contributed by
the powers of $F$ --- so what replaces their equality of dimensions is
\[
  d\bigl((m^{2}+1)\delta_{1},\,\delta_{2}\bigr)
  \ \le\ 2^{\,m^{2}\lceil\log_{2}(m^{2}+1)\rceil}\;d(\delta_{1},\delta_{2}),
\]
a constant depending on $m$ alone. That is why the repair is free: the step
drives $\delta_{1}\to\infty$ with $m$ fixed, so a factor independent of
$\delta_{1}$ changes nothing. A formalizer may also drop $\mathcal{J}$
altogether, as is done here --- apply $M$ to the auxiliary function rather than
to the ideal, and every dimension is measured against $\mathcal{I}$, so their
Lemmas 2.2 and 2.3 are proved once instead of twice.

\emph{(viii)} \cite{AF22} let $d(z)$ be a common denominator of the matrices of
their decomposition (2.36) and then assume $k_{0}$ chosen large enough that
$d(\alpha^{q^{k_{0}}})\ne0$. This is legitimate for them, because their $d$ is
read off the untwisted matrix and is fixed \emph{before} $k_{0}$ is chosen. It
cannot be imitated in the order the argument must actually be run: \S2.4 never
evaluates $\varphi$ at $\alpha^{q^{k_{0}}}$, it evaluates the twisted matrix
$\varphi(z^{q^{k_{0}}})$ at $\alpha$, so the decomposition is of the twisted
matrix, its $d$ depends on $k_{0}$, and there is no second ``for
$k_{0}\gg1$'' left to spend. Nor can the untwisted degree be used instead:
$\varphi(z^{q^{k_{0}}})$ may have \emph{strictly smaller} degree than $\varphi$
--- for $\varphi=\sqrt{z}$ and $q$ even it is rational. The free-module argument
of \emph{(vi)} removes the denominator, and with it the difficulty.

\subsection*{Hypotheses that are load-bearing and invisible until written down}

Characteristic zero in Theorem~\ref{thm:af17reg}, as recorded in
\S\ref{sub:axioms}: over $\mathbb{F}_{p}(z)$ the statement is false, so an axiom
stated for an arbitrary field would have been a false axiom.

Algebraic closedness of the base field in Theorem~\ref{thm:af22branch}\emph{(c)}.
It is invisible in the printed statement and becomes visible the moment one asks
where the normalizing matrix lives: the value of an algebraic function at an
algebraic point is algebraic, and lands in the image of the base field only when
that field is algebraically closed --- true of \cite{AF22}'s $\Qbar$, and needed
because the auxiliary-function step substitutes $Y\mapsto MY$ with $M$ a matrix
over that field.

The algebraicity in Theorem~\ref{thm:af22branch} must be required of the
\emph{entries} of $\varphi$, not of the ambient field, and this is the difference
between a true statement and a false one. The ambient field is relatively
algebraically closed, so for every $\xi$ it contains $\sqrt{z-\xi}$, which has no
analytic branch at $\xi$; a hypothesis asserting that the ambient field is
realized by analytic functions would be refutable. Only finitely many elements
can be realized at once --- which is why the interface realizes a subring.

$\alpha\ne0$, $|\alpha|<1$ and $q\ge2$ are what make ``for $k\gg1$'' meaningful:
the points $\alpha^{q^{k}}$ then have strictly decreasing moduli, so they are
pairwise distinct and all but finitely many avoid any fixed finite bad set. At
$q=1$ the sequence is constant and the statement is false at a bad point. The
same $q\ge2$ is load-bearing a second time, in the descent below.

\subsection*{The two-field obstruction}

Theorem~\ref{thm:af22branch} wants an algebraically closed base field. The
lower-bound step is Liouville's inequality and wants a number field, because the
height machinery is available only for fields carrying a finite family of
archimedean absolute values --- and that is a fact about $\Qbar$, not an accident
of packaging. No field is both. \cite{AF22} never meet this, because their
heights are absolute rather than relative.

The resolution is a descent to a single number field, fixed once and never
enlarged. It must be a single one: the heights available are relative, $[F:\Q]$
times the absolute ones, so letting each auxiliary polynomial choose its own
field would let the Liouville constant grow with the degree parameter and destroy
the contradiction that the whole argument is aimed at. Finitely many numbers
generate it --- the coefficients of $A$, the coefficients of the given relation,
the entries of the branch value, the constant terms $f_{i}(0)$, and $\alpha$ ---
and the point worth recording is that the solutions contribute only their
\emph{constant} terms: comparing coefficients in $f(z)=A(z)f(z^{q})$ expresses
$[z^{n}]f_{i}$ through coefficients of index $<n$, an induction that exists only
because $q\ge2$.

\subsection*{What the library lacked}

Liouville's inequality, which turns out not to be a product-formula computation
at all --- the product formula is already spent inside the height of an inverse
--- and costs fifteen lines. A sum bound for the projective height of tuples: the
naive analogue of the one-coordinate bound is \emph{false}, since the projective
height is scaling-invariant and a sum is not (over $\Q$, $H(c,0)=H(0,1)=1$ for
every $c\ne0$, while $H\bigl((c,0)+(0,1)\bigr)=H(c)$ is unbounded), and what a
matrix product actually needs is the bound for sums whose terms are all read off
one tuple. A sharp evaluation bound in which each degree is charged once, the
naive one carrying a factor equal to the number of monomials --- exponential in
the degree parameter, and therefore worthless where the constant must be
independent of it. ``Regular extension $\Rightarrow$ linearly disjoint'', against
an arbitrary finite-dimensional subextension and not merely a simple one. And,
last, the Mahler substitution itself on the ambient field: it has to be built
from the substitution on power series, extended to the field of formal Laurent
series and then to its algebraic closure, and the final step needs two algebra
structures on one type at once --- which in Lean is what a type synonym is for.

\subsection*{Interfaces}

Two structural facts came out of the analytic layer and are worth stating
independently of Lean. The realization of algebraic functions by analytic ones
cannot be a homomorphism defined on the whole ambient field: $z-c$ is a unit
there, and its realization vanishes at $c$. So it is defined on a subring, and
then its injectivity is no longer free --- that subring is never a field, for the
same reason --- and has to be part of what is assumed rather than derived. And
taking germs along the principal filter of the domain, rather than at the point,
makes a germ an honest function and evaluation at a point an honest ring
homomorphism; this is what turns \cite{AF22}'s ``well-defined for $k\gg1$'', the
one genuinely awkward phrase in \S2.2, into a total operation, and makes their
radicality lemma a five-line consequence of ``the germ ring of a reduced ring is
reduced''.

\subsection*{What remains cited}

Theorems~\ref{thm:af17reg} and~\ref{thm:af22branch}, and nothing else:
\texttt{\#print axioms} on the Lean form of Theorem~\ref{thm:af}, and on every
result of \S\ref{sub:af} that consumes it, returns those two together with the
ambient axioms of classical logic. The reasons they are cited are different in
kind. Theorem~\ref{thm:af17reg} is cited because its own proof descends into the
analytic theory of Mahler functions --- natural boundaries, non-polar
singularities --- which is a project rather than a step; the deduction
\cite{AF22} make \emph{from} it is not, and is proved here. Theorem
\ref{thm:af22branch} is cited because none of the three ingredients of its short
literature proof is available in the formalized library: no vanishing criterion
for resultants, hence no ``finitely many singularities''; no field of convergent
power series and no Newton--Puiseux theorem, hence no way to produce a branch;
and no analytic implicit function theorem for algebraic function elements. A
model is exhibited for its clauses \emph{(b)} and \emph{(c)}, so that the cited
form is known to be satisfiable rather than vacuous; the further clauses folded
into it, as described in \S\ref{sub:axioms}, are argued from \cite{AF22}'s
construction and are not separately modelled.

\section{Formalization}\label{app:lean}

\begingroup\raggedright
Every statement above was verified in Lean~4, except the four cited axioms,
which are assumed, and the computations of \S\ref{sub:outlook}, which are not
formalized.
See \url{https://github.com/rwst/Transcendence-Criteria}.
The files live under \texttt{RB/}, with the twenty-nine files
\texttt{CITED/AdamczewskiFaverjon*.lean} (Appendix~\ref{app:af}) and with
\texttt{CITED/SubspaceTheorem.lean}, \texttt{CITED/AlloucheShallitBasic.lean},
\texttt{CITED/AlloucheShallitComplexity.lean} and
\texttt{CITED/CorvajaZannierAlgebraic.lean}
supplying inputs. ``std3'' abbreviates the ambient axioms of classical Lean
(propositional extensionality, choice, quotient soundness). Four cited axioms
occur, each marked below: ``[AF17]'' for \texttt{AF.lemme\_2\_2}, ``[AF22]'' for
\texttt{AF.lemma\_2\_8}, ``[Sub]'' for
\texttt{Subspace.evertseSchlickewei}, and ``[CZ04a]'' for
\texttt{CZ.pseudoPisot\_approx\_alg} (the algebraic-multiplier instance of
Theorem~\ref{thm:czalg}, carried only by \S\ref{sub:algmult}; the rational
instance is derived from [Sub] and is not an axiom). Every entry is fully
proved in Lean except the four marked \emph{cited axiom}. All footprints below
were confirmed with \texttt{\#print axioms}.
\par\endgroup

\medskip
\noindent\emph{Cited inputs $(\S\ref{sub:axioms})$.}
\begin{center}
\tiny
\renewcommand{\arraystretch}{1.25}
\setlength{\tabcolsep}{2.5pt}
\fitwidth{%
\begin{tabular}{@{}llll@{}}
\hline
Statement & Lean identifier & File & Status\\
\hline
Thm.~\ref{thm:af17reg} & \texttt{AF.lemme\_2\_2} & \texttt{AdamczewskiFaverjonRegular} & cited axiom [AF17]\\
Thm.~\ref{thm:af22branch} & \texttt{AF.lemma\_2\_8} & \texttt{AdamczewskiFaverjonBranchFull} & cited axiom [AF22]\\
Thm.~\ref{thm:af} & \texttt{AF.transcendental\_or\_rat\_of\_automatic} & \texttt{AdamczewskiFaverjon} & std3 + [AF17] + [AF22]\\
\quad\emph{[AF22] Thm 2.1, the lifting thm} & \texttt{AF.theoreme\_2\_1} & \texttt{AdamczewskiFaverjonTheoreme21} & std3 + [AF17] + [AF22]\\
\quad\emph{[AF17] Thm 1.7 (i)} & \texttt{AF.theoreme\_1\_7\_i} & \texttt{AdamczewskiFaverjonTheoreme17} & std3 + [AF17] + [AF22]\\
\quad\emph{[AF17] Cor 1.8, general form} & \texttt{AF.corollaire\_1\_8} & \texttt{AdamczewskiFaverjonTheoreme17} & std3 + [AF17] + [AF22]\\
\quad\emph{nonsingular Mahler system} & \texttt{RB.exists\_nonsingular\_mahlerSystem} & \texttt{MahlerNonsingular} & std3\\
\quad\emph{radius $\ge1$, not a pole} & \texttt{RB.one\_le\_genFunSeries\_radius} & \texttt{MahlerAnalytic} & std3\\
Thm.~\ref{thm:subspace} & \texttt{Subspace.evertseSchlickewei} & \texttt{SubspaceTheorem} & cited axiom [Sch91]\\
--- & \texttt{CZ.pseudoPisot\_approx\_of\_subspace} & \texttt{CorvajaZannierProof} & std3 + [Sub]\\
--- & \texttt{NKR.sUnit\_pair\_integrality\_of\_subspace} & \texttt{NairKumarRoutProof} & std3 + [Sub]\\
Rem.~\ref{rem:nkr} & \texttt{NKR.thm13i\_unrepaired\_false} & \texttt{NairKumarRout} & std3\\
--- & \texttt{AS.kKernel}, \texttt{AS.IsAutomatic} & \texttt{AlloucheShallitBasic} & def\\
--- & \texttt{AS.complexity} & \texttt{AlloucheShallitComplexity} & def\\
Rem.~\ref{rem:linear} & \texttt{AS.complexity\_linear\_of\_automatic} & \texttt{AlloucheShallitComplexity} & std3\\
--- & \texttt{AS.not\_automatic\_of\_complexity\_superlinear} & \texttt{AlloucheShallitComplexity} & std3\\
\hline
\end{tabular}}
\end{center}

\medskip
\noindent\emph{The orbit, the word, the constant, and rigidity
$(\S\ref{sec:basic}$--$\S\ref{sec:rig})$.}
\begin{center}
\tiny
\renewcommand{\arraystretch}{1.25}
\setlength{\tabcolsep}{2.5pt}
\fitwidth{%
\begin{tabular}{@{}llll@{}}
\hline
Statement & Lean identifier & File & Status\\
\hline
Def.~\ref{def:orbit} & \texttt{RB.x}, \texttt{RB.wmin} & \texttt{Basic} & def\\
Prop.~\ref{prop:sel} & \texttt{two\_mul\_x\_succ}, \texttt{wmin\_le\_one} & \texttt{Basic} & std3\\
Def.~\ref{def:W} & \texttt{RB.W} & \texttt{Basic} & def\\
Prop.~\ref{prop:circuit} & \texttt{circuit\_sum} & \texttt{Basic} & std3\\
Prop.~\ref{prop:partial} & \texttt{x\_mul\_pow} & \texttt{Basic} & std3\\
Def.~\ref{def:K} & \texttt{RB.K} & \texttt{Basic} & def\\
Prop.~\ref{prop:Klim} & \texttt{tendsto\_x\_mul\_pow}, \texttt{tsum\_wmin\_eq} & \texttt{Basic} & std3\\
Prop.~\ref{prop:window} & \texttt{x\_le\_K\_mul\_pow}, \texttt{K\_mul\_pow\_le} & \texttt{Basic} & std3\\
Def.~\ref{def:rep} & \texttt{RB.IsRepetition} & \texttt{Rigidity} & def\\
Thm.~\ref{thm:rigid} & \texttt{isRepetition\_iff\_dvd} & \texttt{Rigidity} & std3\\
\quad\emph{equal circuit sums} & \texttt{lemmaR\_int} & \texttt{Rigidity} & std3\\
Thm.~\ref{thm:ceiling} & \texttt{repetition\_pow\_lt} & \texttt{Rigidity} & std3\\
\quad\emph{orbit growth} & \texttt{two\_pow\_mul\_x\_add\_one\_le} & \texttt{Basic} & std3\\
Cor.~\ref{cor:slope} & \texttt{repetition\_linear\_bound} & \texttt{Rigidity} & std3\\
Prop.~\ref{prop:residues} & \texttt{complexity\_eq\_ncard\_residues} & \texttt{Residues} & std3\\
\quad\emph{factors are residues} & \texttt{factor\_eq\_iff\_residue\_eq} & \texttt{Residues} & std3\\
Cor.~\ref{cor:dubfloor} & \texttt{complexity\_lower\_bound} & \texttt{Residues} & std3\\
Prop.~\ref{prop:mod3} & \texttt{x\_succ\_mod\_three} & \texttt{Residues} & std3\\
Thm.~\ref{thm:aper}, \ref{thm:aper2} & \texttt{not\_eventually\_periodic} & \texttt{Rigidity} & std3\\
Thm.~\ref{thm:cf} & \texttt{closed\_form} & \texttt{ClosedForm} & std3\\
\hline
\end{tabular}}
\end{center}

\medskip
\noindent\emph{Transcendence criteria $(\S\ref{sec:main})$.}
\begin{center}
\tiny
\renewcommand{\arraystretch}{1.25}
\setlength{\tabcolsep}{2.5pt}
\fitwidth{%
\begin{tabular}{@{}llll@{}}
\hline
Statement & Lean identifier & File & Status\\
\hline
Thm.~\ref{thm:t1a} & \texttt{not\_automatic\_of\_K\_algebraic\_irrational} & \texttt{NotAutomatic} & std3 + [AF17] + [AF22]\\
Cor.~\ref{cor:t1acontra} & \texttt{transcendental\_of\_automatic\_of\_irrational} & \texttt{NotAutomatic} & std3 + [AF17] + [AF22]\\
$\theta_{n}$ (\S\ref{sub:kernel}) & \texttt{RB.tail}, \texttt{tail\_nonneg}, \texttt{tail\_lt\_one} & \texttt{OrbitKernel} & std3\\
$\theta_{n}=\{\Kc(3/2)^{n}\}$ & \texttt{tail\_eq\_fract} & \texttt{Residues} & std3\\
Prop.~\ref{prop:violator} & \texttt{dist\_le\_of\_repetition} & \texttt{OrbitKernel} & std3\\
Cor.~\ref{cor:contract} & \texttt{abs\_tail\_sub\_le\_of\_repetition} & \texttt{OrbitKernel} & std3\\
Def.~\ref{def:viol} & \texttt{RB.scaledViolators} & \texttt{ScaledKernel} & def\\
Lem.~\ref{lem:degen} & \texttt{dist\_pos\_of\_num\_le} & \texttt{ScaledKernel} & std3\\
Thm.~\ref{thm:kernel} & \texttt{scaledViolators\_finite} & \texttt{ScaledKernel} & std3 + [Sub]\\
Prop.~\ref{prop:efffloor} & \texttt{one\_le\_den\_mul\_two\_pow\_mul\_dist} & \texttt{EffectiveSlice} & std3\\
Cor.~\ref{cor:effslice} & \texttt{scaledViolators\_finite\_of\_lt\_half} & \texttt{EffectiveSlice} & std3\\
Thm.~\ref{thm:czalg} & \texttt{CZ.pseudoPisot\_approx\_alg} & \texttt{CorvajaZannierAlgebraic} & cited axiom\\
$\mathcal V_{\R}$ (\S\ref{sub:algmult}) & \texttt{RB.algViolators} & \texttt{AlgebraicKernel} & def\\
Thm.~\ref{thm:algslice} & \texttt{algGapBounded\_slice\_finite} & \texttt{AlgebraicKernel} & std3 + [CZ04a]\\
Thm.~\ref{thm:algcond} & \texttt{superlinear\_of\_K\_algebraic\_of\_pairBranch} & \texttt{AlgebraicKernel} & std3 + [CZ04a] + [Sub]\\
Cor.~\ref{cor:closerep} & \texttt{closeRepetitions\_finite\_of\_K\_algebraic} & \texttt{AlgebraicKernel} & std3 + [CZ04a] + [Sub]\\
Thm.~\ref{thm:t1arat} & \texttt{superlinear\_of\_K\_rat} & \texttt{RationalK} & std3 + [Sub]\\
Thm.~\ref{thm:main} & \texttt{not\_automatic\_of\_K\_algebraic} & \texttt{RationalK} & std3 + [AF17] + [AF22] + [Sub]\\
Thm.~\ref{thm:main} & \texttt{transcendental\_of\_automatic} & \texttt{RationalK} & std3 + [AF17] + [AF22] + [Sub]\\
\quad\emph{rational case} & \texttt{not\_automatic\_of\_K\_rat} & \texttt{RationalK} & std3 + [Sub]\\
Thm.~\ref{thm:dich} & \texttt{superlinear\_or\_K\_irrational} & \texttt{RationalK} & std3 + [Sub]\\
\hline
\end{tabular}}
\end{center}

\medskip
\noindent\emph{The complements $(\S\ref{sec:comp})$.}
\begin{center}
\tiny
\renewcommand{\arraystretch}{1.25}
\setlength{\tabcolsep}{2.5pt}
\fitwidth{%
\begin{tabular}{@{}llll@{}}
\hline
Statement & Lean identifier & File & Status\\
\hline
Prop.~\ref{prop:padic} & \texttt{norm\_two\_thirds}, \texttt{x\_mul\_pow\_padic} & \texttt{PadicValue} & std3\\
Prop.~\ref{prop:padic} & \texttt{tsum\_wmin\_padic}, \texttt{tsum\_wmin\_shift\_padic} & \texttt{PadicValue} & std3\\
Prop.~\ref{prop:notdemand} & \texttt{not\_demand} & \texttt{NoStammeringRoute} & std3\\
Thm.~\ref{thm:noindex} & \texttt{not\_indexConditionExpFreq} & \texttt{NoStammeringRoute} & std3\\
\hline
\end{tabular}}
\end{center}

\medskip
The sanity witnesses $x=1,2,3,5,8,12,18,27,41$ and $w=1,0,1,1,0,0$ for $x_{0}=1$
(matching A061419 and the first letters of $g_{3/2}$) and the closed-form values
$\lfloor \Kc(3/2)^{0}\rfloor=1$, $\lfloor \Kc(3/2)^{4}\rfloor=8$, $\lfloor
\Kc(3/2)^{8}\rfloor=41$ are verified by decision procedures
(\texttt{x\_sanity}, \texttt{wmin\_sanity}, \texttt{closed\_form\_sanity}).


\end{document}